\newcommand{\I}{\mathbf 1}
\newcommand{\C}{\mathbb C}
\newcommand{\bG}{\mathbb G}
\newcommand{\sC}{\mathcal C}
\newcommand{\sD}{\mathcal D}
\newcommand{\sE}{\mathcal E}
\newcommand{\sF}{\mathcal F}
\newcommand{\sJ}{\mathcal J}
\newcommand{\sM}{\mathcal M}
\newcommand{\sN}{\mathcal N}
\newcommand{\sO}{\mathcal O}
\newcommand{\sR}{\mathcal R}
\newcommand{\sV}{\mathcal V}
\newcommand{\sW}{\mathcal W}
\newcommand{\iso}{\xrightarrow{\,\,\sim\,}}
\newcommand{\nd}{\nobreakdash-\hspace{0pt}}
\renewcommand{\theenumi}{(\arabic{enumi})}
\DeclareMathOperator{\Aut}{Aut}
\DeclareMathOperator{\End}{End}
\DeclareMathOperator{\Ext}{Ext}
\DeclareMathOperator{\Hom}{Hom}
\DeclareMathOperator{\Iso}{Iso}
\DeclareMathOperator{\Mod}{Mod}
\DeclareMathOperator{\MOD}{MOD}
\DeclareMathOperator{\Rep}{Rep}
\DeclareMathOperator{\REP}{REP}
\DeclareMathOperator{\Sp}{Sp}
\DeclareMathOperator{\Spec}{Spec}
\DeclareMathOperator{\tr}{tr}
\newtheorem{thm}{Theorem}[section]
\newtheorem{cor}[thm]{Corollary}
\newtheorem{lem}[thm]{Lemma}
\newtheorem{prop}[thm]{Proposition}
\theoremstyle{definition}
\newtheorem{remark}[thm]{Remark}
\newtheorem*{assump*}{Assumption}
\numberwithin{equation}{section}
\begin{document}

\title{Pullback of principal bundles along proper morphisms}

\author[I. Biswas]{Indranil Biswas}

\address{Department of Mathematics, Shiv Nadar University, NH91, Tehsil Dadri,
Greater Noida, Uttar Pradesh 201314, India}

\email{indranil.biswas@snu.edu.in, indranil29@gmail.com}

\author[P. O'Sullivan]{Peter O'Sullivan}

\address{Mathematical Sciences Institute, The Australian National University,
Canberra ACT 2601, Australia}

\email{peter.osullivan@anu.edu.au}

\thanks{}

\subjclass[2020]{Primary 14L30; Secondary 18F15, 18B40, 32L05}

\keywords{Almost minimal reduction; groupoid; tensor category; pullback}

\date{}

\dedicatory{}

\begin{abstract}
We study the behaviour of principal bundles under pullback along proper surjective morphisms of either schemes over an algebraically closed
field of characteristic 0 or complex analytic spaces.
\end{abstract}

\maketitle

\tableofcontents

\section{Introduction}

It has been shown \cite[Lemma~4.2]{BisDum22} that a principal bundle with reductive structure group 
over a connected compact complex manifold is almost trivial, 
in the sense that its structure group can be reduced to a finite subgroup, if and only if its pullback along a proper surjective morphism is almost trivial. 
On the other hand if $X$ is a connected scheme proper over an algebraically closed field of characteristic $0$,
it has been shown \cite[Proposition~15.5]{O19} that a principal bundle over $X$ with reductive structure
group is almost minimal, in the sense that its structure group cannot be reduced to any reductive subgroup which is not of finite index,
if and only if its pullback along a surjective finite locally free morphism $f\,:\,X' \,\longrightarrow\, X$ is almost minimal.
In this paper we prove that when $X$ is normal, the almost minimality statement holds for an arbitrary proper surjective morphism $f$.
If further $X'$ is connected and $f$ induces a surjective homomorphism on fundamental groups, then $f$
induces a bijection from isomorphism classes of principal subbundles with reductive structure group of a given principal bundle over $X$ to those of its pullback
along $f$. 
Complex analytic analogues of these results are also proved, which contain in particular the above almost triviality statement.

To describe the results more precisely, we begin with the algebraic case.
Let $k$ be an algebraically closed field of characteristic $0$, and let $X$ be a scheme over $k$.
If $G$ is an affine algebraic group over $k$, recall that a principal $G$\nd bundle over $X$ is a scheme $P$
over $X$ together with a right action of $G$ on $P$ over $X$ such that locally in the \'etale topology, $P$ is
isomorphic over $X$ to $X \times_k G$ with $G$ acting through right translation.
If $G_1$ is a $k$\nd subgroup of $G$, a principal $G_1$\nd subbundle of $P$ is a closed subscheme $P_1$ of $P$
such that the action of $G$ on $P$ restricts to an action of $G_1$ on $P_1$ with $P_1$ a principal $G_1$\nd bundle over $X$.
Two principal $G_1$\nd subbundles of $P$ will be called isomorphic if they are isomorphic as principal $G_1$\nd bundles over $X$.
Any such isomorphism is induced by a unique automorphism of the principal $G$\nd bundle $P$.
In this paper we are mainly concerned with principal subbundles with (not necessarily connected) reductive structure group,
and the behaviour of isomorphism classes of such subbundles under pullback along proper surjective morphisms.

A principal bundle with reductive structure group will be called \emph{minimal} if it has no principal subbundle 
with a strictly smaller reductive structure group.
The importance of such principal bundles in the present context comes from the fact that, under appropriate conditions on $X$, the isomorphism 
classes of principal $G_1$\nd subbundles of a principal $G$\nd bundle $P$ over $X$ for any reductive $k$\nd subgroup
$G_1$ of $G$ are completely determined once a reductive $k$\nd subgroup $G_0$ of $G$ is known for which $P$ has a minimal principal $G_0$\nd subbundle.
Explicitly, if $H^0(X \, , \, \sO_X)$ is a henselian local $k$\nd algebra with residue field $k$,
then the isomorphism classes of principal $G_1$\nd subbundles of $P$ are parametrised by
a finite set \eqref{e:Tset} depending only on $G$, $G_0$ and $G_1$ but not on $X$ or $P$.
This follows from Theorem~\ref{t:push} below, which in the case where $H^0(X \, , \, \sO_X) = k$ is equivalent to 
a result of Bogomolov \cite[p.~401, Theorem~2.1]{Bo94}.

Suppose now that $X$ is locally noetherian and normal, and let $f\,:\,X' \,\longrightarrow\, X$ be a proper surjective morphism.
Then $P$ has a principal subbundle with reductive structure group if and only if its pullback $f^*P$ along $f$ does (Theorem~\ref{t:redsub} below). 
Suppose further that $H^0(X \, , \, \sO_X)$ and $H^0(X' \, , \, \sO_{X'})$ are henselian local with residue field $k$. 
In view of the above, the question of pullback along $f$ of isomorphism classes of principal $G_1$\nd subbundles of $P$ for $G_1$ reductive 
reduces to that of the pullback of minimal principal bundles. 
Such a pullback need not be minimal: any principal bundle 
with finite structure group for example is trivialised by pullback along a finite \'etale cover.
It is however always almost minimal (Theorem~\ref{t:almin} below).
It can be shown that $f$ factors as a proper surjective morphism $X' \,\longrightarrow\, X_1$
which induces a surjection on fundamental groups, followed by a finite \'etale morphism $X_1 \,\longrightarrow\, X$
(see the paragraph following Theorem~\ref{t:redsub} below).
If we suppose further that $X_1 \,=\, X$, or equivalently that $f^*Z$ is connected for every connected \'etale cover $Z$ of $X$,
then $f^*$ preserves minimal principal bundles, and indeed for every reductive $k$\nd subgroup $G_1$ of $G$ it induces a bijection 
from isomorphism classes of principal $G_1$\nd subbbundles of $P$ to those of $f^*P$ (Theorem~\ref{t:pull} below).
As one consequence, if $G$ is reductive, then two principal $G$\nd bundles $P_1$ and $P_2$ over $X$ are isomorphic if and only if $f^*P_1$ and $f^*P_2$ are isomorphic (Corollary~\ref{c:pull} below).

Consider now the complex analytic case.
Given a complex Lie group $J$, we define principal $J$\nd bundles over a complex analytic space,
principal $J_1$\nd subbundles for a closed complex Lie subgroup $J_1$ of $J$, and isomorphism of such principal $J_1$\nd subbundles, 
similarly to the algebraic case.
We confine attention to those complex Lie groups $J$ which are algebraic, in the sense that $J$ is the complex Lie 
group $G_\mathrm{an}$ associated to an affine algebraic group $G$ over $\C$.
If there exists such a $G$ which is reductive, we say that $J$ is reductive.
For $J$ reductive, minimal and almost minimal principal $J$\nd bundles over a complex analytic space are defined as in the algebraic case.
For a non-empty complex analytic space $X$, the $\C$\nd algebra $H^0(X \, , \, \sO_X)$ is henselian local with residue field $\C$ 
if and only if the restriction to $X_\mathrm{red}$ of every holomorphic function on $X$ is constant \cite[Lemma~2.1]{BisO'S21}, and in 
particular when $X$ is reduced, if and only if $H^0(X \, , \, \sO_X) \,=\, \C$. 
We then have complex analytic analogues Theorems~\ref{t:pushan}, \ref{t:redsuban}, \ref{t:alminan}, 
\ref{t:pullan} and Corollary~\ref{c:pullan} respectively of Theorems~\ref{t:push}, \ref{t:redsub}, \ref{t:almin}, \ref{t:pull} and 
Corollary~\ref{c:pull}. 
The almost triviality result \cite[Lemma~4.2]{BisDum22} follows for example from Theorem~\ref{t:alminan} together with the equivalent form
Corollary~\ref{c:conjan} of Theorem~\ref{t:pushan}.

Theorem~\ref{t:push}, on which most of the results in the algebraic case in this paper depend, is a particular case of 
\cite[Corollary~13.9]{O19}, which is there deduced from a corresponding result \cite[Corollary~12.11(i)]{O19} for groupoids; see Remark 
\ref{rem-ap} for an alternative approach. Here we use \cite[Corollary~12.11(i)]{O19} directly, after first recalling the well-known 
dictionary between principal bundles and transitive affine groupoids. The other result in the algebraic case taken from \cite{O19} is 
Proposition~\ref{p:spl}, which is a particular case of \cite[Corollary~10.14]{O19}.

The analytic analogue Theorem~\ref{t:pushan} of Theorem~\ref{t:push} is proved as an application of the splitting theorem for tensor categories proved in \cite{AndKah}, \cite{O} and \cite{O19}, together with the analytic form of the
dictionary between principal bundles and tensor functors from a category of representations to a category of vector bundles.
The algebraic form of this dictionary is well-known, and Theorem~\ref{t:push} can be proved by an almost identical argument if preferred.
The proof of the analytic analogue Proposition~\ref{p:splan} of Proposition~\ref{p:spl} is self-contained, and does not depend on \cite{O19}.
Again Proposition~\ref{p:spl} can be proved by an almost identical argument.
Once Theorem~\ref{t:pushan} and Proposition~\ref{p:splan} have been established, the proofs of the analytic results are almost identical
to those of the algebraic results, and are omitted.

In both the algebraic and analytic cases, the results for pullback of principal bundles along a morphism $f\,:\,X' \,\longrightarrow\, X$
are proved in a more general form where either one of two conditions \ref{i:redsublf} or \ref{i:redsubprop} on $f$ holds 
(see Theorems~\ref{t:redsub} and \ref{t:redsuban}), where \ref{i:redsubprop} corresponds to the condition above that $f$ be proper and surjective.
In each case the required result is first proved under condition \ref{i:redsublf}.
The result under condition \ref{i:redsubprop} is then deduced from that under \ref{i:redsublf} using the lemmas given in the next section.

\section{Preliminaries}

Recall that a ringed space $X$ is said to be normal if for each point $x \,\in\, X$ the stalk 
$\sO_{X,x}$ is an integral domain which is integrally closed in its field of fractions. If $X$ 
is a ringed space with $\sO_{X,x}$ an integral domain for each $x \,\in\, X$, an $\sO_X$\nd module 
$\sF$ will be called \emph{torsion free} if $\sF_x$ is a torsion free $\sO_{X,x}$\nd module for 
every $x \,\in\, X$.

\begin{lem}\label{l:lfree}
Let $X$ be a locally noetherian normal scheme and $\sF$ a torsion free coherent $\sO_X$\nd module.
Then $X$ has an open subscheme $U$ with complement everywhere of codimension at least two
such that the restriction of $\sF$ to $U$ is locally free.
\end{lem}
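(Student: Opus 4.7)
The plan is to take $U$ to be the locus where $\sF$ is locally free, and then verify the two required properties: that $U$ is open, and that its complement contains no points of codimension $\leq 1$.

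First I would recall the standard fact that the locally free locus of a coherent sheaf on a locally noetherian scheme is open. If $\sF_x$ is free of rank $n$ for some $x \in X$, lift a basis to sections of $\sF$ on an open neighborhood of $x$; by coherence these generate $\sF$ in a smaller open, giving a surjection $\sO_X^n \twoheadrightarrow \sF$ locally. The kernel is coherent and vanishes at $x$, hence vanishes on an open neighborhood, showing $\sF$ is free of rank $n$ there. Thus $U$ is open in $X$.

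Next I would show $U$ contains every point of codimension $\leq 1$. By normality, at any point $x \in X$ the stalk $\sO_{X,x}$ is an integrally closed noetherian local domain. If $x$ has codimension $0$, then $\sO_{X,x}$ is a field, and any finitely generated module, in particular $\sF_x$, is free. If $x$ has codimension $1$, then $\sO_{X,x}$ is a one-dimensional integrally closed noetherian local domain, hence a discrete valuation ring. A finitely generated torsion free module over a DVR is free, so $\sF_x$ is free. In both cases $x \in U$.

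The complement $Z \,=\, X \setminus U$ is therefore a closed subset of $X$ all of whose points have codimension $\geq 2$, so every irreducible component of $Z$ has codimension at least two in $X$. The main (and only) nontrivial ingredient is the openness of the free locus, and even that is routine once one invokes coherence and Nakayama; the rest is the basic structure of modules over fields and DVRs together with the characterisation of normal local rings of dimension $\leq 1$ as regular.
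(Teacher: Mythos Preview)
Your proof is correct and follows exactly the same approach as the paper's: take $U$ to be the free locus of $\sF$, note it is open, and observe that any point of codimension at most one lies in $U$ because its local ring is a field or a DVR and a finitely generated torsion free module over such a ring is free. Your version simply spells out in more detail what the paper states tersely.
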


\begin{proof}
The subset $U$ of those $x \,\in\, X$ for which $\sF_x$ is a free $\sO_{X,x}$\nd module
is open in $X$, and the restriction of $\sF$ to $U$ is locally free.
If $x \,\in \,X$ is of codimension at most one, then $\sO_{X,x}$ is either a field or a
discrete valuation ring, so that $x \,\in\, U$ because $\sF_x$ is torsion free.
\end{proof}

\begin{lem}\label{l:restr}
Let $X$ be a locally noetherian normal scheme, $X'$ a reduced and irreducible scheme, and
let $f\,:\,X' \,\longrightarrow\, X$ be a proper surjective morphism.
Then $X$ has an open subscheme $U$ with complement everywhere of codimension at least two
such that the restriction of $f_*\sO_{X'}$ to $U$ is a locally free $\sO_U$\nd module.
\end{lem}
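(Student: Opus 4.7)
The plan is to show that $f_*\sO_{X'}$ is a torsion-free coherent $\sO_X$-module and then invoke Lemma~\ref{l:lfree}.

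First I would establish coherence: since $f$ is proper and $X$ locally noetherian, Grothendieck's finiteness theorem implies that $f_*\sO_{X'}$ is a coherent $\sO_X$-module. Next I would observe that $X$ itself is integral. Indeed, since $X$ is normal and locally noetherian, each stalk $\sO_{X,x}$ is a domain, so the irreducible components of $X$ are pairwise disjoint; surjectivity of $f$ together with irreducibility of $X'$ forces $X$ to consist of a single irreducible component, and then normality gives that $X$ is reduced, hence integral. In particular, $X$ has a unique generic point $\eta$, and the generic point $\eta'$ of $X'$ must map to $\eta$ (since the closure of $f(\eta')$ contains $f(X') = X$). The induced homomorphism $K(X) = \sO_{X,\eta} \,\longrightarrow\, \sO_{X',\eta'} = K(X')$ on function fields is then injective.

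For torsion-freeness, fix $x \in X$ and suppose $t \in \sO_{X,x}$ is nonzero with $t \cdot s = 0$ for some $s \in (f_*\sO_{X'})_x$. After shrinking to an affine open $V \ni x$, represent $t$ and $s$ by sections over $V$ and $f^{-1}(V)$ respectively, with $f^*(t) \cdot s = 0$ in $\sO_{X'}(f^{-1}(V))$. Because $X'$ is integral and $f^{-1}(V)$ is a non-empty open subset of $X'$, the ring $\sO_{X'}(f^{-1}(V))$ is a subring of $K(X')$. The image of $f^*(t)$ in $K(X')$ is the image of $t$ under the injective map $K(X) \hookrightarrow K(X')$ established above, and $t$ restricts nontrivially to $\sO_{X,\eta} = K(X)$ (since $X$ is integral and $t \neq 0$ on an open set), so $f^*(t)$ is nonzero in $K(X')$. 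Hence $s = 0$ in $\sO_{X'}(f^{-1}(V))$, so $s = 0$ in the stalk.

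With $f_*\sO_{X'}$ now known to be coherent and torsion-free, Lemma~\ref{l:lfree} directly supplies the required open $U$. I expect no serious obstacle: the only subtlety is noticing that the hypotheses force $X$ to be integral, after which torsion-freeness reduces to the elementary observation that a ring map from a field is injective.
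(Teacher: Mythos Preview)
Your proposal is correct and follows essentially the same approach as the paper: show that $f_*\sO_{X'}$ is torsion-free and coherent, then apply Lemma~\ref{l:lfree}. The paper's proof is terser---it observes that $\sO_X \to f_*\sO_{X'}$ is injective (since $f$ is surjective and $X$ reduced) and that each stalk of $f_*\sO_{X'}$ is an integral domain (since $X'$ is integral), which immediately gives torsion-freeness---whereas you spell out the same idea via function fields and also make the coherence step explicit.
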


\begin{proof}
Since $f$ is surjective and $X$ is reduced, the natural homomorphism $\sO_X \,\longrightarrow
\, f_*\sO_{X'}$ has trivial kernel. This shows that $f_*\sO_{X'}$ is a torsion free $\sO_X$\nd module, because
each stalk of it is an integral domain. The required result now follows from Lemma~\ref{l:lfree}.
\end{proof}

Let $R$ be a commutative ring, and let $M$ be an $R$\nd module of finite presentation. If
\begin{equation*}
R^n \,\longrightarrow\, R^m \,\longrightarrow\, M \,\longrightarrow\, 0
\end{equation*}
is a finite presentation of $M$, recall that for each integer $r \,\ge\, -1$, we have an ideal of $R$
which is generated by the $(m-r) \times
(m-r)$ minors of the $m \times n$ matrix defining $R^n\,\longrightarrow\, R^m$ if 
$0\,\le \, r\,< \,m$, and it is defined to be $0$ when $r = -1$ and $R$ when $r \,\ge\, m$; this ideal of $R$
for integer $r$ is actually independent of the choice of presentation
for each $r$ \cite[\href{https://stacks.math.columbia.edu/tag/07Z8}{Tag 07Z8}]{stacks-project}. 
It is a finitely generated ideal of $R$, which is the $r$th Fitting ideal $\mathrm{Fitt}_r(M)$ of $M$, and its formation
commutes with extension along homomorphisms $R \,\longrightarrow\, R'$ of commutative rings. We have
\begin{equation*}
\mathrm{Fitt}_r(M) \,\,\subset\,\, \mathrm{Fitt}_{r'}(M)
\end{equation*} 
for $r \,\le\, r'$.
If $Z_r$ is the closed subscheme of $\Spec(R)$ defined by $\mathrm{Fitt}_r(M)$, then for $r \ge 0$ a morphism of schemes from $T$ to $\Spec(R)$
factors through the subscheme $Z_{r-1} - Z_r$ if and only if the pullback to $T$ of the quasi-coherent sheaf associated to $M$ is locally free
of rank $r$ \cite[\href{https://stacks.math.columbia.edu/tag/05P8}{Tag 05P8}]{stacks-project}.

Let $X$ be a ringed space with $\sO_X$ coherent, and let $\sF$ be a coherent $\sO_X$\nd module.
Then the ideals $\mathrm{Fitt}_r(\sF_x)$ of the rings $\sO_{X,x}$ for $x \,\in\, X$ are the stalks of a coherent ideal $\mathrm{Fitt}_r(\sF)$ of $\sO_X$.
Indeed, if $\sF$ has a finite presentation on the open subset $U$ of $X$, then the restriction of $\mathrm{Fitt}_r(\sF)$ to $U$ is generated
by appropriate minors of the matrix defining a presentation.

\begin{lem}\label{l:lfreean}
Let $X$ be a normal complex analytic space, and let $\sF$ be a torsion free coherent $\sO_X$\nd module.
Then $X$ has an open subspace $U$ with complement an analytic subset everywhere of
codimension at least two such that the restriction of $\sF$ to $U$ is locally free.
\end{lem}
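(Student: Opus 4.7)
The plan is to mimic the proof of Lemma~\ref{l:lfree}, with the pointwise scheme-theoretic argument replaced by a germ-wise reduction to that lemma via the stalks $\sO_{X,x}$. Let $U\,\subset\, X$ be the set of $x$ where $\sF_x$ is a free $\sO_{X,x}$\nd module. By coherence of $\sF$ and Nakayama's lemma, $U$ is open and $\sF|_U$ is locally free.

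To verify that $X \setminus U$ is an analytic subset, I would work locally on $X$. Since $X$ is normal and hence locally irreducible, we may assume $X$ is irreducible, so $\sF$ has a well-defined generic rank $r$. The coherent ideal $\mathrm{Fitt}_{r-1}(\sF)$ vanishes on the dense open locus where $\sF$ is locally free of rank $r$, and therefore vanishes globally by the identity theorem on the locally irreducible space $X$. Together with the Fitting ideal criterion for local freeness recalled before the lemma, this gives $U \,=\, X \setminus V(\mathrm{Fitt}_r(\sF))$, so $X \setminus U$ is analytic.

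For the codimension, fix $x \,\in\, X \setminus U$. The stalk $\sO_{X,x}$ is a Noetherian normal local ring and $\sF_x$ is a torsion free coherent module over it, so Lemma~\ref{l:lfree} applied to $\Spec \sO_{X,x}$ yields that every minimal prime $\mathfrak{p}$ of $\mathrm{Fitt}_r(\sF_x) \,=\, \mathrm{Fitt}_r(\sF)_x$ has height at least two. Under the standard bijection between germs of closed analytic subsets at $x$ and radical ideals of $\sO_{X,x}$, the irreducible components of the germ of $X \setminus U$ at $x$ correspond to these minimal primes, and by the dimension formula for normal complex analytic local rings each has dimension $\dim \sO_{X,x}/\mathfrak{p} \,=\, \dim_x X \,-\, \mathrm{ht}(\mathfrak{p}) \,\leq\, \dim_x X - 2$, which is the required bound. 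The principal new ingredients beyond imitating Lemma~\ref{l:lfree} are the identity theorem used to establish analyticity of $X \setminus U$ and the dimension formula for normal complex analytic local rings; both are classical, so I anticipate no serious obstacle.
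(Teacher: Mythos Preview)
Your argument is correct and follows essentially the same route as the paper: both identify the non-locally-free locus with $V(\mathrm{Fitt}_r(\sF))$, pass to $\Spec(\sO_{X,x})$, and invoke Lemma~\ref{l:lfree} together with the dimension formula for analytic local rings to obtain the codimension bound. The only cosmetic difference is that the paper takes $r$ to be the least integer with $\mathrm{Fitt}_r(\sF) \ne 0$ and argues via density of $U(x)$ in each $\Spec(\sO_{X,x})$, whereas you fix $r$ as the generic rank and use the identity theorem to kill $\mathrm{Fitt}_{r-1}(\sF)$ globally.
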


\begin{proof}
We may suppose that $X$ is non-empty and connected, and hence irreducible \cite[9.1.2]{GraRem84}.
Let $r$ be the smallest nonnegative integer such that 
\begin{equation*}
\mathrm{Fitt}_r(\sF) \,\ne\, 0.
\end{equation*}
For each $x \,\in\, X$, denote by $Z(x)$ the closed subscheme of $\Spec(\sO_{X,x})$ defined by
the ideal $\mathrm{Fitt}_r(\sF)_x$ of $\sO_{X,x}$,
and also denote by $U(x)$ the complement of $Z(x)$.
Then $U(x)$ is the largest open subscheme of $\Spec(\sO_{X,x})$ to which the restriction of the quasi-coherent sheaf $\widetilde{\sF_x}$ associated to $\sF_x$ is locally free of rank $r$.
Further $\mathrm{Fitt}_r(\sF)_x \,\ne\, 0$ for every $x \,\in\, X$, because $X$ is reduced and irreducible \cite[9.1.1]{GraRem84}.
Consequently, $U(x)$ is dense in $\Spec(\sO_{X,x})$ for each $x$, because $\Spec(\sO_{X,x})$ is reduced and irreducible.
It follows that $U(x)$ is the largest open set of $\Spec(\sO_{X,x})$
such that the restriction of $\widetilde{\sF_x}$ to it is locally free.
Since $\Spec(\sO_{X,x})$ is normal and $\widetilde{\sF_x}$ is torsion-free, Lemma~\ref{l:lfree} thus shows that $Z(x)$ is
of codimension at least two for each $x$.

Denote by $Z$ the closed analytic subspace of $X$ defined by $\mathrm{Fitt}_r(\sF)$, and by $U$ the complement of $Z$.
Then $Z$ is of codimension at least two at each $z\,\in\, Z$, because $Z(z)$ is of codimension
at least two in $\Spec(\sO_{X,z})$. Therefore, $U$ has the required
properties, because $x\,\in\, U$ if and only if $U(x) \,= \,\Spec(\sO_{X,z})$ and
$U(x) \,=\,\Spec(\sO_{X,z})$ if and only if $\sF_x$ is a free $\sO_x$\nd module.
\end{proof}

\begin{lem}\label{l:restran}
Let $X$ be a normal complex analytic space, $X'$ a reduced and irreducible complex analytic space, 
and let $f\,:\,X'\,\longrightarrow\, X$ be a proper surjective morphism.
Then $X$ has an open subspace $U$ with complement an analytic subset everywhere
of codimension at least two,
such that the restriction of $f_*\sO_{X'}$ to $U$ is a locally free $\sO_U$\nd module.
\end{lem}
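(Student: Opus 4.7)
My plan is to follow the strategy of Lemma~\ref{l:restr}: show that $f_*\sO_{X'}$ is a torsion-free coherent $\sO_X$-module, and then apply Lemma~\ref{l:lfreean}. Coherence follows from Grauert's proper direct image theorem, and the injectivity of $\sO_X \to f_*\sO_{X'}$ is established exactly as in the algebraic case, using surjectivity of $f$ together with the reducedness of $X$ (a consequence of normality): a section $s \in \sO_X(V)$ whose image in $\sO_{X'}(f^{-1}V)$ vanishes must vanish pointwise on $f(f^{-1}V) = V$, hence is zero.

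The subtle step is torsion-freeness. The stalks of $f_*\sO_{X'}$ need not be integral domains in the analytic setting (for $s \mapsto s^2$ on $\C$, the stalk of the pushforward at any nonzero point is a product of two local rings), so the algebraic argument does not carry over verbatim. Instead, fix $x \in X$ and suppose $r \in (f_*\sO_{X'})_x$ is annihilated by some nonzero $a \in \sO_{X,x}$. Shrink to a connected open neighborhood $V \ni x$, which is irreducible because $X$ is normal and so locally irreducible, and represent $a, r$ by sections $\tilde a$ on $V$ and $\tilde r$ on $f^{-1}V$ with $(\tilde a \circ f)\tilde r = 0$. At any $x' \in f^{-1}V$ with $\tilde a(f(x')) \neq 0$, the germ of $\tilde a \circ f$ is a unit in $\sO_{X',x'}$, so $\tilde r$ vanishes near $x'$; thus $\tilde r$ vanishes on $f^{-1}V \setminus f^{-1}\{\tilde a = 0\}$. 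Stein factorization of $f|_{f^{-1}V}$, combined with the fact that finite holomorphic maps to a normal target are open, shows that every connected component of $f^{-1}V$ maps surjectively onto $V$. Since $\tilde a \not\equiv 0$ on $V$, it follows that $\tilde a \circ f$ does not vanish identically on any connected component of $f^{-1}V$, so its zero set is nowhere dense there. By the identity theorem together with the reducedness of $X'$, $\tilde r$ then vanishes on all of $f^{-1}V$, and hence $r = 0$.

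Finally, Lemma~\ref{l:lfreean} applied to the torsion-free coherent $\sO_X$-module $f_*\sO_{X'}$ gives the desired open subspace $U$ with complement of codimension at least two. The main obstacle is the torsion-freeness step: beyond the local irreducibility of $X$ provided by normality, one needs the nontrivial fact that connected components of $f^{-1}V$ map onto $V$, which rests on Stein factorization and the openness of finite morphisms with normal base.
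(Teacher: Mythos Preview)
Your strategy---show $f_*\sO_{X'}$ is coherent and torsion-free, then apply Lemma~\ref{l:lfreean}---matches the paper's. The difference is in the torsion-freeness step, and there your argument has a gap. The assertion ``finite holomorphic maps to a normal target are open'' is false as stated (the inclusion of a point into $\C$ is finite but not open); openness requires in addition that source and target be equidimensional of the same dimension, as in \cite[5.4.3]{GraRem84}. After your local Stein factorisation $f^{-1}V \to Y \to V$, nothing yet guarantees that the components of $Y$ are pure of dimension $\dim V$. A related issue: even granting that each connected component $C$ of $f^{-1}V$ surjects onto $V$, the step ``$\tilde a \circ f \not\equiv 0$ on $C$ implies its zero set is nowhere dense in $C$'' tacitly uses that $C$ is irreducible, which can fail for connected open subsets of an irreducible analytic space (a neighbourhood of the node on an irreducible nodal curve is reducible).

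The paper handles both issues by working globally and normalising. First Stein-factorise $f$ as $X' \to X_1 \to X$ and replace $X'$ by $X_1$, so that $f$ becomes finite while $X'$ stays reduced irreducible and $f_*\sO_{X'}$ is unchanged; then pass to the normalisation $X''$ of $X'$ and embed $f_*\sO_{X'}$ into $f'_*\sO_{X''}$. Now $f'\colon X'' \to X$ is finite surjective between irreducible spaces, hence equidimensional of the same dimension, hence open by \cite[5.4.3]{GraRem84}, and torsion-freeness of $f'_*\sO_{X''}$ follows from the stalk description $(f'_*\sO_{X''})_x = \bigoplus_i \sO_{X'',x_i}$ with each $\sO_{X,x} \to \sO_{X'',x_i}$ an injection of integral domains. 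Your identity-principle route can be repaired by the same normalisation move---which makes connected open subsets of the source irreducible and supplies the equidimensionality needed for openness---after which it becomes essentially equivalent to the paper's argument.
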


\begin{proof}
Let $X'\,\longrightarrow\, X_1 \,\xrightarrow{\,\,f_1\,}\, X$ be the Stein factorisation of
$f$. Then $f_1$ is finite with $f_1{}_*\sO_{X_1}\,\longrightarrow\, f_*\sO_{X'}$ an isomorphism.
Replacing $X'$ by $X_1$, we may thus suppose that $f$ is finite.
Denote by $X''$ the normalisation of $X'$ and by $f'\,:\,X'' \,\longrightarrow\, X$
the composite of $f$ with the natural projection $X'' \,\longrightarrow\, X'$.
Then $f'$ is finite and surjective, and we have a monomorphism of $\sO_X$\nd modules
\begin{equation*}
0 \,\longrightarrow\, f_*{\sO_{X'}} \,\longrightarrow\, f'{}\!_*{\sO_{X''}}.
\end{equation*}
For any $x \,\in\, X$ the inverse images under $f'$ of the neighbourhoods of $x$ form a base
for the neighbourhoods of the fibre
$\{x_1,\,x_2,\, \ldots ,\,x_n\}$ of $f'$ above $x$. 
The stalk of $f'{}\!_*{\sO_{X''}}$ at $x$ is thus given by 
\begin{equation*}
(f'{}\!_*{\sO_{X''}})_x \,=\, \sO_{X'',x_1} \oplus \sO_{X'',x_2} \oplus \cdots \oplus \sO_{X'',x_n},
\end{equation*}
with each $\sO_{X'',x_i}$ a finite $\sO_{X,x}$\nd algebra. Since $X''$ is irreducible and $f'$ is finite
and surjective, both $X$ and $X''$ are equidimensional (see \cite[9.1.3]{GraRem84}) of 
the same dimension \cite[5.4.1]{GraRem84}. Therefore, $f'$ is open \cite[5.4.3]{GraRem84}, so that each $\sO_{X,x} \,
\longrightarrow\,\sO_{X'',x_i}$ is an injective homomorphism of integral domains.
It follows that $f'{}\!_*{\sO_{X''}}$ and hence $f_*{\sO_{X'}}$ is a torsion-free $\sO_X$\nd module.
The required result now follows from Lemma~\ref{l:lfreean}.
\end{proof}

See for example \cite[6.45]{GorWed10} for Lemma~\ref{l:iso} and \cite[7.4.2]{GraRem84} 
for Lemma~\ref{l:isoan}.

\begin{lem}\label{l:iso}
Let $X$ be a locally noetherian normal scheme, and let $j\,:\,U\,\longrightarrow\, X$ be the
embedding of an open subscheme whose complement is everywhere of codimension at least two. 
Then the natural homomorphism $\sO_X \,\longrightarrow\, j_*\sO_U$ is an isomorphism.
\end{lem}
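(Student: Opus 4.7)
The plan is to reduce the statement to a classical algebraic fact about normal noetherian integral domains, namely that such a ring equals the intersection of its localisations at height-one primes.

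First, since the claim concerns a morphism of sheaves on $X$, I can work locally. Replacing $X$ by a sufficiently small affine open, I may assume $X \,=\, \Spec(A)$ for $A$ a noetherian normal ring. Because the stalks of $\sO_X$ are integral domains, $\Spec(A)$ is locally a disjoint union of its irreducible components, each an integral normal scheme. Shrinking further if necessary, I reduce to the case where $A$ is a normal noetherian integral domain with field of fractions $K$, and $U \,\subseteq\, X$ is an open subscheme whose complement has codimension at least two. Here $U$ is in particular non-empty, so both $A$ and $\Gamma(U \, , \, \sO_U)$ sit naturally inside $K$, and the map $A \,\longrightarrow\, \Gamma(U \, , \, \sO_U)$ is automatically injective.

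For surjectivity, I would use the well-known fact that for a normal noetherian integral domain,
\begin{equation*}
A \,=\, \bigcap_{\mathrm{ht}(\mathfrak{p}) = 1} A_{\mathfrak{p}},
\end{equation*}
where the intersection is taken inside $K$ over all height-one primes. This is the key algebraic input. Now the hypothesis that the complement of $U$ is everywhere of codimension at least two means that every height-one prime $\mathfrak{p}$ of $A$ corresponds to a point of $U$. A section $s \,\in\, \Gamma(U \, , \, \sO_U)$ therefore has a germ at each such $\mathfrak{p}$, so that $s \,\in\, A_{\mathfrak{p}}$ for every height-one $\mathfrak{p}$. By the displayed equality, $s \,\in\, A$, proving surjectivity.

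There is no real obstacle: the argument is just Hartogs' phenomenon in algebraic guise. The only substantive step is the intersection formula for $A$, which is standard (and is essentially what the external reference \cite[6.45]{GorWed10} supplies). The passage from the sheaf-theoretic statement to the integral-domain case is routine because normality is a stalk-local condition and the irreducible components of a locally noetherian normal scheme are disjoint locally.
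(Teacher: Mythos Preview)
Your argument is correct: reducing to the affine integral case and invoking the intersection formula $A = \bigcap_{\mathrm{ht}(\mathfrak{p})=1} A_{\mathfrak{p}}$ is the standard proof of the algebraic Hartogs lemma. The paper does not give its own proof of this lemma but simply refers to \cite[6.45]{GorWed10}, so your write-up is in fact more detailed than what appears in the paper; the approach is the same as the one underlying the cited reference.
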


\begin{lem}\label{l:isoan}
Let $X$ be a normal complex analytic space, and let $j\,:\,U\,\longrightarrow\, X$
be the embedding of an open subspace whose complement is an analytic subset everywhere of codimension at least two.
Then $\sO_X \,\longrightarrow\, j_*\sO_U$ is an isomorphism.
\end{lem}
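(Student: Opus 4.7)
The plan is to prove both injectivity and surjectivity of the induced map on sections $\sO_X(V) \,\longrightarrow\, \sO_U(V \cap U)$ for each open $V \subset X$, which suffices for the sheaf-level claim.

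For injectivity, I would first argue that $V \cap U$ is dense in $V$. Since $X$ is normal it is locally irreducible, and the closed analytic subset $X \setminus U$, being everywhere of codimension at least two, is nowhere dense in each local irreducible component. A holomorphic function on $V$ that vanishes on the dense open subset $V \cap U$ is then identically zero on $V$ by the identity principle for reduced complex analytic spaces, applied to each local irreducible component.

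For surjectivity, I must extend an arbitrary $g \,\in\, \sO_U(V \cap U)$ to a holomorphic section on $V$. This is the content of the second Riemann extension (removable singularities) theorem for normal complex analytic spaces. The strategy is local around a point $x \,\in\, V \setminus U$: by choosing a small Stein neighbourhood and invoking a local embedding into an ambient $\C^N$, one extends $g$ first across the smooth locus of $X \setminus U$ in the regular part of $X$ using the classical Riemann extension theorem in complex manifolds of codimension at least two, and then handles the remaining singular strata by induction on dimension. Normality enters decisively because the local rings $\sO_{X,x}$ satisfy Serre's condition $S_2$, which is precisely the algebraic condition that permits extending sections over the complement of a codimension $\geq 2$ analytic subset. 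Uniqueness of the extension is automatic from the injectivity already established.

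The main obstacle is the surjectivity step at singular points $x \,\in\, X \setminus U$: without normality the extension genuinely fails (for instance on a nodal curve crossed with a smooth factor, bounded holomorphic functions off a codimension-two set need not extend). Rather than reproduce the inductive argument in full, I would appeal to \cite[7.4.2]{GraRem84}, or alternatively reduce to the analogous fact for the spectra $\Spec(\sO_{X,x})$ via Lemma~\ref{l:iso}, using that $\sO_{X,x}$ is a normal Noetherian local ring and hence satisfies $S_2$. Either route gives the required isomorphism once combined with the elementary density argument above.
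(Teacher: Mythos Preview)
Your proposal is correct and in fact more detailed than what the paper does: the paper gives no proof at all for this lemma, simply citing \cite[7.4.2]{GraRem84} in the sentence preceding the statement. Since your surjectivity step ultimately appeals to the same reference, your approach coincides with the paper's.
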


Let $X$ be a ringed space and $\sF$ an $\sO_X$\nd module.
The cohomology group $H^1(X,\,\sF)$ may be identified with the set $\Ext_{\sO_X}^1(\sO_X,\,
\sF)$ of isomorphism classes of extensions of $\sO_X$ by $\sF$.
The pullback homomorphism
\begin{equation*}
i_{\sF}\,:\,H^1(X,\,\sF) \,\longrightarrow\, H^1(X',\,f^*\sF)
\end{equation*}
along a morphism $f\,:\,X'\,\longrightarrow\, X$ of ringed spaces is then
the pullback of extensions. If 
\begin{equation*}
j_{\sF'}\,:\,H^1(X,\,f_*\sF')\,\longrightarrow\, H^1(X',\,\sF')
\end{equation*}
is $i_{f_*\sF'}\,:\,H^1(X,\,f_*\sF') \,\longrightarrow\, H^1(X',\,f^*f_*\sF')$
followed by the homomorphism of cohomologies given by the
push forward along the counit $f^*f_*\sF'\,\longrightarrow\, \sF'$,
then the naturality of $i_{\sF}$ and the triangular identity for $f_*$ and $f^*$
together give a factorisation
\begin{equation}\label{e:ifac}
H^1(X,\,\sF)\,\longrightarrow\, H^1(X,\,f_*f^*\sF) \,\xrightarrow{\,\,j_{f^*\sF}\,}
\, H^1(X',\,f^*\sF)
\end{equation} 
of $i_{\sF}$, with the first arrow being the one given by the push forward
along the unit $\sF\,\longrightarrow\, f_*f^*\sF$.
Further $j_{\sF'}$ is injective for every $\sF'$ for the following reason: If $\sE$
is an extension of $\sO_X$ by $f_*\sF'$, and $\sE'$ is the push forward of $f^*\sE$
along $f^*f_*\sF'\,\longrightarrow\, \sF'$, then a splitting $\sE' \,\longrightarrow\, \sF'$ 
of $\sE'$ defines by its composite $f^*\sE \,\longrightarrow\, \sF'$ with
$f^*\sE \,\longrightarrow\, \sE'$ and adjunction a splitting
$\sE \,\longrightarrow\, f_*\sF'$ of $\sE$. 

\begin{lem}\label{l:inj}
Let $f\,:\,X' \,\longrightarrow\, X$ be a morphism of ringed spaces such that $f_*\sO_{X'}$
is a locally free $\sO_X$\nd module of finite type.
Suppose that the rank of $f_*\sO_{X'}$ is invertible everywhere on $X$.
Then for every locally free $\sO_X$\nd module of finite type $\sW$, the pullback
homomorphism $$i_{\sW}\, :\, H^1(X,\,\sW)\, \longrightarrow\,H^1(X',\,f^*\sW)$$ is injective.
\end{lem}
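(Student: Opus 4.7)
The plan is to exploit the factorisation \eqref{e:ifac} together with the injectivity of $j_{f^*\sW}$ established in the paragraph preceding the statement. These reduce the problem to showing that the first map in \eqref{e:ifac}, namely the homomorphism
\begin{equation*}
H^1(X,\,\sW) \,\longrightarrow\, H^1(X,\,f_*f^*\sW)
\end{equation*}
induced by the unit $\eta_\sW\,:\,\sW \,\longrightarrow\, f_*f^*\sW$, is injective. For this it suffices to exhibit an $\sO_X$\nd linear retraction of $\eta_\sW$ and apply $H^1(X,-)$.

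I would first treat the case $\sW \,=\, \sO_X$. Since $f_*\sO_{X'}$ is a locally free $\sO_X$\nd module of finite type, multiplication by a local section $a$ is an $\sO_X$\nd linear endomorphism of $f_*\sO_{X'}$ and so has a well-defined trace $\tr(m_a) \,\in\, \sO_X$. Writing $n$ for the locally constant rank function of $f_*\sO_{X'}$, which by hypothesis is invertible in $\sO_X$, the $\sO_X$\nd linear map $\rho\,:\,f_*\sO_{X'} \,\longrightarrow\, \sO_X$ defined by $\rho(a) \,=\, n^{-1}\tr(m_a)$ satisfies $\rho(1) \,=\, n^{-1}\tr(\mathrm{id}) \,=\, 1$, so it is a retraction of the unit $\eta_{\sO_X}\,:\,\sO_X \,\longrightarrow\, f_*\sO_{X'}$.

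For general locally free $\sW$ of finite type, the projection formula gives a canonical $\sO_X$\nd linear isomorphism $f_*\sO_{X'} \otimes_{\sO_X} \sW \iso f_*f^*\sW$ under which $\eta_\sW$ corresponds to $\eta_{\sO_X} \otimes \mathrm{id}_\sW$. Tensoring $\rho$ with $\mathrm{id}_\sW$ therefore yields a retraction of $\eta_\sW$, and applying $H^1(X,-)$ shows that the first map in \eqref{e:ifac} is split injective. Combined with the injectivity of $j_{f^*\sW}$, this proves the injectivity of $i_\sW$.

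The only delicate point is the compatibility of the projection-formula isomorphism with the units of the adjunction $(f^*,\,f_*)$; this is routine and reduces, by local freeness of $\sW$, to the tautological case $\sW \,=\, \sO_X$. The rest of the argument is formal.
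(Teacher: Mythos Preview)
Your proof is correct and follows essentially the same approach as the paper: both reduce via the factorisation \eqref{e:ifac} and the injectivity of $j_{f^*\sW}$ to producing a left inverse of the unit $\eta_\sW$, and both do so by combining the projection formula isomorphism $f_*\sO_{X'} \otimes_{\sO_X} \sW \iso f_*f^*\sW$ with the trace map on $f_*\sO_{X'}$ divided by the invertible rank. The only cosmetic difference is that the paper writes the factorisation of $\eta_\sW$ directly rather than first treating $\sW = \sO_X$ and then tensoring.
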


\begin{proof}
Since $j_{f^*\sW}$ is injective, it is enough, after factoring $i_{\sW}$ as in
\eqref{e:ifac}, to show that the unit
\begin{equation*}
\eta_{\sW}\,:\sW \,\longrightarrow\, f_*f^*\sW
\end{equation*}
has a left inverse.

The push forward $f_*f^*\sW$ of $f^*\sW$ has a structure of $f_*\sO_{X'}$\nd module. 
Consequently, $\eta_{\sW}$ factors as
\begin{equation}\label{e1}
\sW \,\xrightarrow{\,\,u \otimes_{\sO_X} \sW\,\,}\, f_*\sO_{X'} \otimes_{\sO_X} \sW
\,\longrightarrow\, f_*f^*\sW,
\end{equation}
where $u\,:\,\sO_X \,\longrightarrow\, f_*\sO_{X'}$ is
the identity of the $\sO_X$\nd algebra $f_*\sO_{X'}$ and the second arrow is a morphism of $f_*\sO_{X'}$\nd modules.
Arguing locally over $X$ shows that the second arrow in \eqref{e1} is an isomorphism.
Since $f_*\sO_{X'}$ is locally free of finite type, we have a trace morphism
$f_*\sO_{X'} \,\longrightarrow\, \sO_X$ of $\sO_X$\nd modules. Its composite with
$u$ is the endomorphism of $\sO_X$ given by the rank $r$ of $f_*\sO_{X'}$.
Since $r$ is invertible, it follows that $u$ and hence $u \otimes_{\sO_X} \sW$ has a left inverse.
\end{proof}

\section{Schemes over an algebraically closed field of characteristic 0}

\emph{Throughout this section, $k$ is an algebraically closed field of characteristic $0$.}

Let $G$ be an affine algebraic $k$\nd group.
By a principal $G$\nd bundle over a $k$\nd scheme $X$ is meant a scheme $P$ over $X$ together with a right action of $G$ on $P$ above $X$,
such that locally in the \'etale topology, $P$ is trivial, i.e.,\ isomorphic to $X \times_k G$ with $G$ acting by right translation.
A morphism of principal $G$\nd bundles over $X$ is a morphism of their underlying schemes over $X$ which is compatible with 
the right actions of $G$.
Every such morphism is an isomorphism.
The pullback $f^*P$ of $P$ along a morphism of $k$\nd schemes $f\,:\,X'
\,\longrightarrow\, X$, with its canonical right action of $G$ over $X'$, is a principal 
$G$\nd bundle over $X'$.

Given a $k$\nd homomorphism $h\,:\,G \,\longrightarrow\, G'$ of affine algebraic $k$\nd groups and a principal $G$\nd bundle $P$ 
over $X$, there exists a principal $G'$\nd bundle $P'$ over $X$ and a morphism $l\,:\,P \,\longrightarrow\, P'$ 
over $X$ such that
\begin{equation*}
l(pg) \,= \,l(p)h(g)
\end{equation*}
for points $p$ of $P$ and $g$ of $G$, and the pair $(P',\,l)$ 
is unique up to a unique isomorphism. 
The pair $(P',\,l)$, or simply the principal $G'$\nd bundle $P'$ is said to be the push forward of $P$ along $h$.
For any $g' \,\in \,G'(k)$, if $h'\,:\,G \,\longrightarrow\, G'$ is the conjugate morphism defined by $g\, \longmapsto\, g' h(g) g'{}^{-1}$,
then $(P',\,l')$, where $l'$ is the right translate of $l$ by $g'{}^{-1}$, is the push forward of $P$ along $h'$.

Let $G_1$ be an affine $k$\nd subgroup of $G$.
A \emph{principal $G_1$\nd subbundle of $P$} is defined as a closed subscheme $P_1$ of $P$
such that the action of $G$ on $P$ restricts to an action of $G_1$ on $P_1$ with $P_1$ a principal $G_1$\nd bundle over $X$.
The push forward of $P_1$ along the inclusion map $G_1 \, \hookrightarrow\, G$ is then $P$.
Two principal $G_1$\nd subbundles of $P$ will be called isomorphic if they are isomorphic as principal 
$G_1$\nd bundles over $X$.
By uniqueness of push forward,
any such isomorphism is the restriction of a unique automorphism of the principal $G$\nd bundle $P$ over $X$. 

We do not require reductive algebraic $k$\nd groups to be connected.
Let $G_1$ be a reductive $k$\nd subgroup of $G$. 
Then the quotient $G/G_1$ is affine. 
If $P$ is a principal $G$\nd bundle over a $k$\nd scheme $X$, then the 
quotient $P/G_1$ exists, and \'etale locally over $X$ is isomorphic to to $X \times_k (G/G_1)$. 
In particular $P/G_1$ is affine over $X$. 
Further $P$ is a principal $G_1$\nd bundle over $P/G_1$, and it is a
principal $G_1$\nd subbundle of the pullback, to $P/G_1$, of the principal $G$\nd bundle $P$.
Given a cross-section of $P/G_1$ over $X$ we may pull back, to $X$, this
principal $G_1$\nd bundle. This produces a bijection from the set of cross-sections of 
$P/G_1$ over $X$ to the set of principal $G_1$\nd subbundles of $P$ over $X$.
Formation of $P/G_1$ commutes with pullback.

The group-scheme $\underline{\Aut}_G(P)$ over $X$ of automorphisms of a principal $G$\nd bundle $P$ over $X$ similarly exists, and \'etale 
locally over $X$ is isomorphic to $X \times_k G$.

Proposition~\ref{p:bij} below is an almost immediate consequence of the definitions and, 
with appropriate definitions, holds with $k$ an arbitrary field. 
By contrast, results such as Theorems~\ref{t:pull} and \ref{t:pullan} below --- with conditions on $X$ and $X'$, 
but a less stringent condition on $f$ --- are more difficult, and apply only to isomorphism classes 
of subbundles.

\begin{prop}\label{p:bij}
Let $f\,:\,X'\,\longrightarrow\, X$ be a morphism of $k$\nd schemes with $\sO_X
\,\longrightarrow\, f_*\sO_{X'}$ an isomorphism. Let $G$ be an affine algebraic $k$\nd group
and $G_1\, \subset\, G$ a reductive $k$\nd subgroup.
Then for any principal $G$\nd bundle $P$ over $X$, pullback along the projection of $f^*P$ onto $P$ defines a bijection
from the set of (respectively, the set of isomorphism classes of) principal $G_1$\nd subbundles
of $P$ to the set of (respectively, the set of isomorphism classes of) principal $G_1$\nd subbundles of $f^*P$.
\end{prop}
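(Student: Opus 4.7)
The plan is to reduce the assertion to a statement about sections of an affine $X$-scheme and then exploit the adjunction $f^* \dashv f_*$ together with the hypothesis $\sO_X \iso f_*\sO_{X'}$.

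First I would translate principal subbundles into sections. The discussion preceding the proposition identifies principal $G_1$\nd subbundles of $P$ with cross-sections of $P/G_1$ over $X$, and similarly for $f^*P$. Since formation of $P/G_1$ commutes with pullback, we have $(f^*P)/G_1 \,=\, f^*(P/G_1)$, so it is enough to prove that pullback of sections gives a bijection
\begin{equation*}
(P/G_1)(X) \,\longrightarrow\, (f^*(P/G_1))(X').
\end{equation*}

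Next, since $G_1$ is reductive the quotient $G/G_1$ is affine, so by the \'etale-local form recalled above $P/G_1$ is affine over $X$; write $P/G_1 \,=\, \Spec_X(\sA)$ for a quasi-coherent $\sO_X$\nd algebra $\sA$. Sections of $P/G_1$ over $X$ are then $\sO_X$\nd algebra homomorphisms $\sA \,\longrightarrow\, \sO_X$, and sections of $f^*(P/G_1)$ over $X'$ are $\sO_{X'}$\nd algebra homomorphisms $f^*\sA \,\longrightarrow\, \sO_{X'}$. By $f^* \dashv f_*$ the latter correspond to $\sO_X$\nd algebra homomorphisms $\sA \,\longrightarrow\, f_*\sO_{X'}$, and the hypothesis $\sO_X \iso f_*\sO_{X'}$ identifies these with the former. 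Unwinding the adjunction shows that the resulting bijection is exactly pullback, which establishes the bijection on the set of principal $G_1$\nd subbundles.

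For the refinement to isomorphism classes, recall from the preliminaries that two $G_1$\nd subbundles of $P$ are isomorphic if and only if they are carried to one another by an automorphism of $P$, and similarly over $X'$. The group scheme $\underline{\Aut}_G(P)$ is \'etale locally $X \times_k G$, hence affine over $X$, so the same adjunction argument applied to its structure algebra yields a bijection
\begin{equation*}
\underline{\Aut}_G(P)(X) \,\iso\, \underline{\Aut}_G(f^*P)(X').
\end{equation*}
By functoriality this bijection is equivariantly compatible with the actions of the two automorphism groups on the sets of sections of $P/G_1$ and of $f^*(P/G_1)$, so the bijection of the previous paragraph descends to a bijection of orbits, i.e.\ of isomorphism classes of subbundles.

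I do not expect a real obstacle: once the translation into sections of an affine $X$-scheme is in place, everything is formal from the adjunction. The only points needing a moment of care are the assertion that $P/G_1$ and $\underline{\Aut}_G(P)$ are affine over $X$ (both follow from \'etale descent of affineness from the stated \'etale-local models) and the bookkeeping to check that the bijection on sections really is pullback; neither is substantive.
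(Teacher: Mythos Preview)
Your proposal is correct and follows essentially the same approach as the paper: reduce to the statement that for any affine $X$\nd scheme $Z$, pullback gives a bijection on cross-sections (via the identification of sections with $\sO_X$\nd algebra maps to $\sO_X \iso f_*\sO_{X'}$), and then apply this to $Z = P/G_1$ for the subbundles and to $Z = \underline{\Aut}_G(P)$ for the isomorphism classes. The paper phrases the general claim for arbitrary affine $Z$ first and then specializes, whereas you treat the two cases in turn, but the content is identical.
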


\begin{proof}
By the above, it suffices to show that for any scheme $Z$ affine over $X$, pullback along $f$
defines a bijection from the cross-sections of $Z$ to the cross-sections of $f^*Z$.
Indeed, taking $Z \,=\, P/G_1$ will give the required result for sets of subbundles, and
taking $Z \,=\, \underline{\Aut}_G(P)$ will then give that 
for isomorphism classes.

We have $Z \,=\, \Spec(\sR)$ for a quasi-coherent commutative $\sO_X$\nd algebra $\sR$.
A cross-section of $Z$ may be identified with a morphism $\sR
\,\longrightarrow\, \sO_X$ of $\sO_X$\nd algebras, and a cross-section of 
$f^*Z$ with a morphism $\sR\,\longrightarrow\, f_*\sO_{X'}$ of $\sO_X$\nd algebras.
Pullback of cross-sections is then given by composition with $\sO_X
\,\longrightarrow\, f_*\sO_{X'}$.
\end{proof}

Theorem~\ref{t:push} below will be deduced from \cite[Corollary~12.11(i)]{O19} using the dictionary between
principal bundles and transitive affine groupoids, which we recall next.
An alternative way of proving Theorem~\ref{t:push} is described in Remark \ref{rem-ap}.

Let $X$ be a $k$\nd scheme.
Recall that a groupoid over $X$ is a $k$\nd scheme $K$ with a source $k$\nd morphism $d_1$ and a target $k$\nd morphism $d_0$ from
$K$ to $X$, together with an identity $X \,\longrightarrow\, K$ and a composition 
\begin{equation*}
\circ\,:\,K \times_{{}^{d_1}X^{d_0}} K\,\longrightarrow\, K
\end{equation*} 
which is associative, has $X \,\longrightarrow\, K$ as a left and right identity, and has inverses.
The points of $X$ and $K$ in a given $k$\nd scheme then form respectively the objects and
arrows of a groupoid, i.e.,\ a category in which 
every morphism is an isomorphism.
A morphism $K \,\longrightarrow\, K'$ of groupoids over $X$ is a morphism of $k$\nd schemes which is compatible with the source, target, identity and composition
of $K$ and $K'$.
A subgroupoid of a groupoid $K$ over $X$ is a closed subscheme $K'$ of $K$ such that the groupoid structure of $K$ induces one on $K'$.

The source and target morphisms $d_1$ and $d_0$ of a groupoid $K$ over $X$ are the components of a morphism
\begin{equation}\label{d01}
(d_0,\,d_1)\,:\,K \,\longrightarrow\, X \times_k X
\end{equation}
over $k$. The inverse image under $(d_0,\,d_1)$ of the diagonal $X$ of $X \times_k X$ is
then a group scheme $K^\mathrm{diag}$ over $X$.
Any cross-section $v$ of $K^\mathrm{diag}$ over the diagonal $X$ of $X \times_k X$
induces an automorphism of $K$ over $X$ by conjugation:
\begin{equation}\label{ec}
K \,\,\iso\,\, K\, ,\ \ \, w\, \longmapsto\, v(d_0(w))\circ w \circ v(d_1(w))^{-1}.
\end{equation}

A groupoid over $X$ is said to be affine (respectively, of finite type) if the morphism
$(d_0,\,d_1)$ in \eqref{d01} is affine (respectively, of finite type).
For $X$ non-empty, a groupoid over $X$ which is affine and of finite type is said to be transitive if $(d_0,\, d_1)$ is surjective and smooth.

The principal bundles over $X$ form a category whose objects are pairs $(G,\,P)$ with $G$ an affine algebraic $k$\nd group and $P$ 
a principal $G$\nd bundle over $X$,
where a morphism from $(G,\,P)$ to $(G',\,P')$ is a pair $(h,\,l)$ (which is in fact
completely determined by $l$) with $h$ a $k$\nd homomorphism 
$G \,\longrightarrow\, G'$ and $l$ a morphism $P \,\longrightarrow\, P'$ over $X$ such that $(P',\,l)$ is the push forward of $P$ along $h$.
When $X$ is non-empty, we may
define as follows a functor $\underline{\Iso}_-(-)$ from this category to transitive affine groupoids of finite type over $X$.
The points with source $x_1$ and target $x_0$ in a $k$\nd scheme $Z$ of the groupoid
$\underline{\Iso}_G(P)$ over $X$ are
the isomorphisms $P_{x_1} \,\iso\, P_{x_0}$ of principal $G$\nd bundles over $Z$.
The morphism $\underline{\Iso}_h(l)$ from $\underline{\Iso}_G(P)$ to $\underline{\Iso}_{G'}(P')$
sends the point $v\,:\,P_{x_1} \,\iso\, P_{x_0}$ of 
$\underline{\Iso}_G(P)$ over $(x_0,\,x_1)$ to the unique point
$$v'\,:\,P'{}\!_{x_1} \,\iso\, P'{}\!_{x_0}$$ of $\underline{\Iso}_{G'}(P')$
over $(x_0,\,x_1)$ such that
\begin{equation*}
v'(l(p))\,\, =\,\, l(v(p))
\end{equation*}
for every point $p$ of $P$ over $x_1$. 

The connection between principal bundles and transitive affine groupoids over $X$ is 
easiest to describe when $X$ has a $k$\nd point $x$. We may then consider the category of 
pairs $(G,\,P)$ equipped with a $k$\nd point of $P$ above $x$, where the morphisms are 
those preserving $k$\nd points above $x$. The functor $\underline{\Iso}_-(-)$ defines an 
equivalence from this category to transitive affine groupoids of finite type over $X$, with 
a quasi-inverse which sends $K$ to $(G,\,P)$ with $G$ the fibre of $K$ above $(x,\,x)$ and 
$P$ the inverse image in $K$ of $X \times x$, where the $k$\nd point above $x$ is the 
identity of $P_x \,=\, G$ and the right action of $G$ on $P$ is by composition.

To describe what happens for arbitrary non-empty $X$, call two morphisms $(h,\,l)$ and $(h',\,l')$ from
$(G,\,P)$ to $(G',\,P')$ conjugate if there exists a $g' \,\in\, G'(k)$ (necessarily unique) such that $h'(g)\,= \, g'h(g)g'{}^{-1}$
and $l'(p) \,=\, l(p)g'{}^{-1}$ for points $g$ of $G$ and $p$ of $P$.
Then $\underline{\Iso}_-(-)$ defines an equivalence from pairs $(G,\,P)$ and conjugacy classes of morphisms
to transitive affine groupoids of finite type over $X$. 
Indeed the full faithfulness can be seen from the fact that the 
$(h,\,l)$ with $\underline{\Iso}_h(l)$ a given morphism from $\underline{\Iso}_G(P)$ to $\underline{\Iso}_{G'}(P')$ are the $k$\nd points of a
$k$\nd scheme which is simply transitive under the action of $G$ by conjugation, which follows by faithfully flat descent from any extension $k'$ 
of $k$ for which $X$ has a $k'$\nd point.
The essential surjectivity can be proved using the following condition \cite[Lemma~5.1 and the paragraph preceding it]{O19}:
a transitive affine groupoid of finite type $K$ over $X$ is of the form $\underline{\Iso}_G(P)$ for some $(G,\,P)$ if and only if a simply transitive
$K$\nd scheme \cite[p.~21]{O19} exists.
Since $k$ is algebraically closed, this condition is satisfied by \cite[Lemma~7.3]{O19}.

The cross-sections of the group scheme $\underline{\Iso}_G(P)^\mathrm{diag}$
over the diagonal in $X\times_k X$ are the automorphisms of the principal 
$G$\nd bundle $P$ over $X$.
Conjugation by such a cross-section $v$ is the automorphism
$\underline{\Iso}_{1_G}(v)$ of $\underline{\Iso}_G(P)$ (see \eqref{ec}).

The transitive affine subgroupoids of the groupoid $\underline{\Iso}_G(P)$ over $X$ are all of the
form $\underline{\Iso}_{G'}(P')$ for $G'$ a $k$\nd subgroup of 
$G$ and $P'$ a principal $G'$\nd subbundle of $P$.
Further $\underline{\Iso}_{G'}(P') = \underline{\Iso}_G(P)$ if and only if $G' = G$.

Let $K$ be a transitive affine groupoid of finite type $K$ over $X$.
Then $K^\mathrm{diag}$ is a smooth group scheme of finite type over $X$.
As in \cite{O19}, call $K$ \emph{reductive} if the fibres of $K^\mathrm{diag}$ are reductive, and \emph{minimally reductive} 
if further $K'\,=\, K$ for every reductive transitive affine subgroupoid $K'$ of $K$.

Let $G$ be a reductive algebraic $k$\nd group and $P$ a principal $G$\nd bundle over a $k$\nd 
scheme $X$. We say that $P$ is \emph{minimal} if a principal $G'$\nd subbundle of $P$ exists 
for a reductive $k$\nd subgroup $G'$ of $G$ only if $G'\, =\, G$. The groupoid 
$\underline{\Iso}_G(P)$ over $X$ is reductive if and only if $G$ is reductive. When this is so, 
$\underline{\Iso}_G(P)$ is minimally reductive if and only if $P$ is minimal.
Any push forward of a minimal principal $G$\nd bundle over $X$ along a surjective $k$\nd homomorphism is minimal. 

Theorem~\ref{t:push} below is the case where $H \,=\, X$,
$k \,=\, \overline{k}$, and $F$ and $F'$ are of finite type of \cite[Corollary~13.9]{O19}.
It is equivalent to the case where $k$ is algebraically closed, 
$H\,=\,X$, and $K$ is of finite type of \cite[Corollary~12.11(i)]{O19}, from which we deduce it here.

\begin{remark}\label{rem-ap}
Theorem~\ref{t:push} can also be proved in a similar way to its analytic analogue Theorem~\ref{t:pushan} below.
To do this, it is first necessary to prove the algebraic analogue of Theorem~\ref{t:min}.
This can be done by replacing Lemma~\ref{l:Pequ} in the proof of Theorem~\ref{t:min} by its algebraic analogue,
which is well known from the theory of Tannakian categories.
The algebraic analogue of Theorem~\ref{t:min} is equivalent to the case where $k$ is algebraically closed and $H \,=
\,K$ of the minimal reductive criterion of \cite[Theorem~12.5(ii)]{O19}.
Theorem~\ref{t:push} can be deduced from it in the same way as Theorem~\ref{t:pushan} from Theorem~\ref{t:min}.
\end{remark}

\begin{thm}\label{t:push}
Let $X$ be a $k$\nd scheme, and let $G_0$ and $G$ be affine algebraic $k$\nd groups with $G_0$
reductive. Let $P_0$ be a minimal principal $G_0$\nd bundle over $X$, and
let $h_1$ and $h_2$ be $k$\nd homomorphisms from $G_0$ to $G$.
Suppose that $H^0(X,\,\sO_X)$ is a henselian local $k$\nd algebra with residue field $k$.
Then the push forwards of $P_0$ along $h_1$ and $h_2$ are isomorphic if
and only if $h_1$ and $h_2$ are conjugate.
\end{thm}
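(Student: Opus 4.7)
The plan is to translate the statement into the language of transitive affine groupoids of finite type over $X$ via the functor $\underline{\Iso}_-(-)$ recalled above, and then invoke \cite[Corollary~12.11(i)]{O19}. One direction has already been noted in the general discussion: if $h_2(g)\,=\, g' h_1(g) g'{}^{-1}$ for some $g'\,\in\, G(k)$, and $l_1\,:\, P_0\, \longrightarrow\, P$ realises $P$ as the push forward of $P_0$ along $h_1$, then the right translate of $l_1$ by $g'{}^{-1}$ realises $P$ as the push forward along $h_2$, so the push forwards of $P_0$ along conjugate homomorphisms are isomorphic.

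For the converse, write $(h_i,\,l_i)\,:\,(G_0,\,P_0)\,\longrightarrow\,(G,\,P_i)$ for the canonical morphisms realising $P_i$ as the push forward of $P_0$ along $h_i$, and let $\psi\,:\,P_1\,\iso\, P_2$ be an isomorphism of principal $G$\nd bundles. Then $(h_1,\,\psi\circ l_1)$ and $(h_2,\,l_2)$ are two morphisms from $(G_0,\,P_0)$ to $(G,\,P_2)$. By the equivalence of categories between pairs $(G',\,P')$ with conjugacy classes of morphisms and transitive affine groupoids of finite type over $X$, it will suffice to find an automorphism $u$ of $P_2$ such that $(h_1,\,u\circ\psi\circ l_1)$ and $(h_2,\,l_2)$ induce the same morphism of groupoids from $K_0\,:=\,\underline{\Iso}_{G_0}(P_0)$ to $K_2\,:=\,\underline{\Iso}_G(P_2)$: full faithfulness of the equivalence then shows that these pairs are conjugate and hence $h_1$ and $h_2$ are conjugate. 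A direct computation from the defining property of $\underline{\Iso}_h(l)$ shows that replacing $l$ by $u\circ l$ modifies the induced groupoid morphism by conjugation by the cross-section of $K_2^\mathrm{diag}$ given by $u$, in the sense of \eqref{ec}.

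The problem is thereby reduced to the following groupoid-theoretic statement: given two morphisms $\phi_1,\,\phi_2\,:\,K_0\,\longrightarrow\,K_2$ of transitive affine groupoids of finite type over $X$, with $K_0$ minimally reductive, there is a cross-section $v$ of $K_2^\mathrm{diag}$ such that $\phi_2$ equals the composite of $\phi_1$ with conjugation by $v$. Here $K_0$ is minimally reductive precisely because $P_0$ is minimal with reductive structure group $G_0$. This is the content of \cite[Corollary~12.11(i)]{O19} applied with $H\,=\,X$, $k$ algebraically closed, and $K$ of finite type, using the hypothesis that $H^0(X,\,\sO_X)$ is henselian local with residue field $k$. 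The substantive work is thus entirely packaged into that corollary; the main obstacle in this reduction is the careful bookkeeping of how the freedom in choosing $\psi$ corresponds exactly to modifying the induced groupoid morphism by conjugation by a cross-section of $K_2^\mathrm{diag}$, so that the statement we need matches the form provided by \cite[Corollary~12.11(i)]{O19}.
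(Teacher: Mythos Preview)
Your proposal is correct and follows essentially the same route as the paper: both directions reduce to \cite[Corollary~12.11(i)]{O19} via the functor $\underline{\Iso}_-(-)$, using that automorphisms of $P$ correspond to cross-sections of $\underline{\Iso}_G(P)^{\mathrm{diag}}$ and that conjugation by such a cross-section is $\underline{\Iso}_{1_G}(v)$, whence $\underline{\Iso}_{h_2}(l_2)=\underline{\Iso}_{h_1}(v\circ l_1)$ and the full faithfulness on conjugacy classes gives the result. The paper's write-up simply absorbs your $\psi$ into the choice of $l_1$ from the outset, but the argument is otherwise identical.
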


\begin{proof}
The ``if'' part has been seen above, even without any condition on $G_0$ or $X$.

Conversely suppose that the push forwards of $P_0$ along $h_1$ and $h_2$ are isomorphic. Then 
for some principal $G$\nd bundle $P$ over $X$ there exist
$l_1,\, l_2\, :\, P_0\, \longrightarrow\, P$ such that $(h_1,\,l_1)$ 
and $(h_2,\,l_2)$ are morphisms from $(G_0,P_0)$ to $(G,P)$.
By \cite[Corollary~12.11(i)]{O19}, $\underline{\Iso}_{h_2}(l_2)$ is the conjugate of $\underline{\Iso}_{h_1}(l_1)$
from $\underline{\Iso}_{G_0}(P_0)$ to $\underline{\Iso}_G(P)$ by a cross-section $v$ of 
$\underline{\Iso}_G(P)^\mathrm{diag}$.
Then
\begin{equation*}
\underline{\Iso}_{h_2}(l_2) = \underline{\Iso}_{1_G}(v) \circ \underline{\Iso}_{h_1}(l_1) = 
\underline{\Iso}_{h_1}(v \circ l_1).
\end{equation*}
Thus $(h_1,\, v \circ l_1)$ and $(h_2,\,l_2)$ and hence $h_1$ and $h_2$ are conjugate.
\end{proof}

Let $G$ be a finite $k$\nd group.
Then any principal $G$\nd bundle $P$ over $X$ is finite \'etale over $X$, 
and any principal subbundle of $P$ is an open and closed subscheme of $P$.

Suppose that $G$ is finite and $X$ is connected. Then $P$ is the disjoint union of at most 
$d$ connected components, where $d$ is the order of $G$. Any connected component $P_0$ of 
$P$ is a principal $G_0$\nd subbundle of $P$, where $G_0$ is the subgroup of $G$ consisting 
of those points that send $P_0$ into itself under the right action on $P$. It follows that 
$P$ is minimal if and only it is connected. Further if $G_i$ is a subgroup of $G$, and 
$P_i$ is a principal $G_i$\nd subbundle of $P$ for $i \,=\, 0,\,1$, with $P_0\, \subset\, 
P$ minimal, then there exists an element $g \,\in\, G(k)$ such that $G_1$ contains 
$gG_0g^{-1}$ and $P_1$ contains $P_0g^{-1}$.

Corollary~\ref{c:conj} below shows that a similar conjugacy result holds for an arbitrary affine 
algebraic $k$\nd group $G$, provided that we require the $k$\nd subgroups $G_i$ to be 
reductive, impose a more stringent condition on $X$, and require conjugacy only up to 
isomorphism.
The case where $H^0(X,\,\sO_X) \, = \, k$ of Corollary~\ref{c:conj} is a result of Bogomolov \cite[p.~401, Theorem~2.1]{Bo94}.

\begin{cor}\label{c:conj}
Let $X$ be a $k$\nd scheme, $G$ an affine algebraic $k$\nd group and $P$
a principal $G$\nd bundle over $X$.
For $i\,=\, 0,\,1$, let $G_i$ be a reductive $k$\nd subgroup of $G$ and $P_i$
a principal $G_i$\nd subbundle of $P$, such that $P_0$ is minimal.
Suppose that $H^0(X,\,\sO_X)$ is a henselian local $k$\nd algebra with residue field $k$.
Then there exists an element $g \,\in \,G(k)$ such that $G_1$ contains $gG_0g^{-1}$ and $P_1$ contains a principal $gG_0g^{-1}$\nd subbundle
isomorphic to $P_0g^{-1}$. 
\end{cor}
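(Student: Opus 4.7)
\emph{Proof plan.}  The plan is to translate the existence of $g$ into a statement about a $G_0$-equivariant morphism $\psi\colon P_0\to G/G_1$ and then invoke Theorem~\ref{t:push} together with the henselian local hypothesis.  Since $P_0\subset P$ is a $G_0$-reduction, there is a canonical isomorphism $P/G_1\cong P_0\times^{G_0}(G/G_1)$, under which the cross-section $s_1\colon X\to P/G_1$ determining $P_1$ corresponds to a $G_0$-equivariant morphism $\psi$ (with $G_0$ acting on $G/G_1$ by left translation through $G_0\subset G$).  I claim that $\psi$ is constant equal to a $k$-rational $G_0$-fixed point $g'G_1\in (G/G_1)^{G_0}(k)$.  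Granting this and setting $g=(g')^{-1}$, the fixed-point condition is equivalent to $gG_0g^{-1}\subset G_1$, and the constancy of $\psi$, combined with the formula $s_1(\pi(p_0))=p_0\cdot g'G_1$ in $P/G_1$, yields $P_0g^{-1}=P_0\cdot g'\subset P_1$; hence $P_1$ itself contains the principal $gG_0g^{-1}$-subbundle $P_0g^{-1}$, trivially isomorphic to $P_0g^{-1}$.

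To prove the claim I would first reduce to the case where $P_1$ is also minimal, by a Noetherian induction on reductive $k$-subgroups of $G_1$: choose a minimal reductive subbundle $(H,Q)$ of $P_1$ and observe that any $gG_0g^{-1}$-subbundle of $Q$ is also one of $P_1$.  Since $G_1$ is reductive, $G/G_1$ is an affine $k$-variety on which the reductive group $G_0$ acts, so one can form the categorical quotient $\pi\colon G/G_1\to(G/G_1)/\!/G_0$.  The $G_0$-invariant composite $\pi\circ\psi$ descends to a morphism $X\to(G/G_1)/\!/G_0$, and the henselian local hypothesis on $H^0(X,\sO_X)$, via its unique closed $k$-point $x_0\in X$, sends this closed point to a single $k$-point $y\in(G/G_1)/\!/G_0(k)$.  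Consequently $\psi((P_0)_{x_0})$ lies in the fibre $\pi^{-1}(y)=\overline{G_0\cdot f'}$, where $G_0\cdot f'$ is the unique closed $G_0$-orbit over $y$, with reductive stabiliser $H'=\mathrm{Stab}_{G_0}(f')$ by Matsushima's theorem.

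A Hilbert--Mumford one-parameter-subgroup argument exhibits on the closed fibre a principal $H'$-subbundle structure on $P_0$; Theorem~\ref{t:push}, via the rigidity it imposes on homomorphisms $G_0\to G$ realising $P$ as a pushforward of $P_0$, then propagates this construction to a globally defined principal $H'$-subbundle of $P_0$ over all of $X$.  Minimality of $P_0$ then forces $H'=G_0$, so $f'$ is a $G_0$-fixed point, and a final Hensel-type rigidity argument using the henselian local hypothesis forces $\psi$ to be globally constant with value $f'$, establishing the claim.  The main obstacle I foresee is this propagation step—lifting the Hilbert--Mumford construction from the closed fibre to the whole of $X$—which is precisely where Theorem~\ref{t:push} plays its essential role, converting the closed-fibre analysis into a global statement via the conjugacy of homomorphisms from the minimal bundle $P_0$.
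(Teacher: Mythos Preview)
Your central claim---that the $G_0$\nd equivariant morphism $\psi\colon P_0\to G/G_1$ is \emph{constant} at a $G_0$\nd fixed point---is strictly stronger than the corollary, and is false in general.  Constancy of $\psi$ at $g'G_1$ is equivalent to the equality $P_1=P_0\,g'G_1$ of subschemes of $P$, whereas the corollary only asserts that $P_1$ contains a $gG_0g^{-1}$\nd subbundle \emph{isomorphic} to $P_0g^{-1}$.  The paper's own discussion immediately following the corollary makes this explicit: the $G_1$\nd subbundles of the special form $P_0tG_1$ are parametrised by $T_G(G_0,G_1)^{-1}(k)/G_1(k)$, while arbitrary $G_1$\nd subbundles are only \emph{isomorphic} to such, and the fibres of the $k$\nd scheme $S_{G_1}(P)$ over the finite set \eqref{e:Tset} are typically positive-dimensional homogeneous spaces under the unipotent group $U_{G_0}(P_0,\mathfrak g)$.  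So a generic $P_1$ in a given isomorphism class will have $\psi$ genuinely non-constant, and no amount of Hilbert--Mumford or Hensel rigidity on the closed fibre can force global constancy.  Your acknowledged ``propagation step'' is therefore not a technical obstacle but an actual obstruction: Theorem~\ref{t:push} controls isomorphism classes of push-forwards, not subbundles, and cannot manufacture a global $H'$\nd subbundle of $P_0$ from closed-fibre data.

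The paper's proof sidesteps all of this by working directly with isomorphism classes.  It forms the principal $(G_0\times_k G_1)$\nd bundle $P_0\times_X P_1$, chooses a minimal reductive subbundle $(G',P')$ of it, and observes that $P_i$ is the push-forward of $P'$ along the projection $h_i\colon G'\to G_i$, with $h_0$ surjective by minimality of $P_0$.  Since both composites $e_i\circ h_i\colon G'\to G$ push $P'$ forward to $P$, Theorem~\ref{t:push} yields a single $g\in G(k)$ conjugating $e_0\circ h_0$ to $e_1\circ h_1$; the factorisation of $h_1$ through $G_0\to gG_0g^{-1}\hookrightarrow G_1$ then exhibits $P_1$ as the push-forward of $P_0g^{-1}$ along the embedding, which is exactly the required ``contains a subbundle isomorphic to'' statement.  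This is a three-line application of Theorem~\ref{t:push} with no GIT, no Matsushima, and no fibrewise analysis.
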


\begin{proof}
Let $G'$ be a reductive $k$\nd subgroup of $G_0 \times_k G_1$ for which $P_0 \times_X P_1$ has a minimal principal $G'$\nd subbundle $P'$.
For $i\,=\, 0,\,1$, write $h_i\,:\,G'\,\longrightarrow\,G_i$ for the restriction of the projection $G_0 \times_k G_1\,\longrightarrow\, G_i$
to $G'$. Then $P_i$ is the push forward of $P'$ along $h_i$.
Since $P_0$ is minimal, $h_0$ is surjective.

If $e_i\,:\,G_i \, \longrightarrow \, G$ is the embedding, then $P$ is the push forward of $P'$ along $e_i \circ h_i$ for $i\,=\, 0,\,1$.
By Theorem~\ref{t:push}, $e_1 \circ h_1$ is thus the conjugate of $e_0 \circ h_0$ by some $g \,\in \,G(k)$.
Then $G_1$ contains $gG_0g^{-1}$, and $h_1$ factors as
\begin{equation*}
G' \,\xrightarrow{\,\,h_0\,\,}\, G_0 \,\longrightarrow\, gG_0g^{-1} \,\longrightarrow\, G_1.
\end{equation*}
It follows that $P_1$ is the push forward of $P_0g^{-1}$ along the embedding of $gG_0g^{-1}$ into $G_1$.
Thus $g$ has the required properties.
\end{proof}

Let $P$ be a principal $G$\nd bundle over $X$. We have an action of $G$ on $P$ with $gp \,=\, 
pg^{-1}$ for points $g$ of $G$ and $p$ of $P$. If $V$ is a representation of $G$, arguing 
\'etale locally over $X$ shows that a pair consisting of an $\sO_X$\nd module $\sV$ and a 
$G$\nd equivariant isomorphism from the pullback of $V$ to $P$ to the pullback of $\sV$ to $P$ exists, 
and is unique up to a unique isomorphism. We write this $\sV$ as
\begin{equation*}
P \times_k^G V.
\end{equation*} 
It is the usual $\sO_X$\nd module associated to $V$ by identifying the points $(pg,\,v)$ and 
$(p,\,gv)$ of $P \times_k V$. If $V$ is finite-dimensional of dimension $n$, then $P \times_k^G 
V$ is a vector bundle, on $X$, everywhere of rank $n$. Formation of $P \times_k^G V$ is 
functorial in $P$, $G$ and $V$ and is compatible with pullback.
Similarly we define
\begin{equation*}
P \times_k^G Z
\end{equation*} 
for $Z$ an affine $G$\nd scheme.
Then for example $P/G'$ for $G'$a reductive $k$\nd subgroup of $G$ is given by taking $Z = G/G'$, 
and $\underline{\Aut}_G(P)$ by taking $Z = G_\mathrm{conj}$, the $k$\nd group $G$ with $G$ acting by conjugation.

To every vector bundle $\sV$ over $X$ of rank $n$ is associated the principal ${\rm GL}_n$\nd bundle
\begin{equation*}
\underline{\Iso}_X(\sO_X^n,\,\sV)
\end{equation*}
over $X$ of isomorphisms from the constant vector bundle $\sO_X^n$ of rank $n$ to $\sV$. 
We then have a functor $\underline{\Iso}_X(\sO_X^n,\,-)$ from vector bundles over $X$ of
rank $n$ and isomorphisms between them
to principal ${\rm GL}_n$\nd bundles over $X$. It is an equivalence, with quasi-inverse
\begin{equation*}
- \times_k^{{\rm GL}_n} k^n,
\end{equation*}
where $k^n$ is the standard $n$\nd dimensional representation of ${\rm GL}_n$.
By passing to the subbundle of those isomorphisms that respect the standard direct sum decomposition $\sO_X^r \oplus \sO_X^s$ of $\sO_X^n$ 
and a given direct sum decomposition $\sV' \oplus \sV''$ of $\sV$ with $\sV'$ of rank $r$ and $\sV''$ of rank $s$, we obtain a bijection
from such decompositions of $\sV$ to reductions of the structure group of
$\underline{\Iso}_X(\sO_X^n,\,\sV)$ from ${\rm GL}_n$ to ${\rm GL}_r\times_k{\rm GL}_s$. 

Let $P$ be a principal $G$\nd bundle over $X$.
If $V$ is a finite-dimensional representation of $G$,
and $P'$ is the push forward of $P$ along the $k$\nd homomorphism $G
\,\longrightarrow\,{\rm GL}_n$ corresponding to $V$ after choosing a basis of it,
we have an isomorphism
\begin{equation*}
P \times_k^G V \,\iso\, P' \times_k^{{\rm GL}_n} k^n.
\end{equation*}
Thus $P\times_k^G V_1$ and $P \times_k^G V_2$ are isomorphic if and only if the push
forwards of $P$ along the $k$\nd homomorphisms
$G\,\longrightarrow\, {\rm GL}_n$ corresponding to $V_1$ and $V_2$ are isomorphic.

\begin{cor}\label{c:assoc}
Let $X$ be a $k$\nd scheme, $G$ a reductive algebraic $k$\nd group and $P$
a minimal principal $G$\nd bundle over $X$.
Suppose that $H^0(X,\,\sO_X)$ is a henselian local $k$\nd algebra with residue field $k$.
Then the following two hold:
\begin{enumerate}
\item\label{i:associndec}
For any finite-dimensional representation $V$ of $G$, the vector bundle $P \times_k^G V$ over $X$ is indecomposable if and only if 
$V$ is irreducible.

\item\label{i:associso}
For any two finite-dimensional representations $V_1$ and $V_2$ of $G$, the vector bundles
$P \times_k^G V_1$ and $P \times_k^G V_2$ over $X$ 
are isomorphic if and only if $V_1$ and $V_2$ are.
\end{enumerate}
\end{cor}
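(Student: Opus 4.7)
The strategy throughout is to translate assertions about the associated vector bundles into assertions about the corresponding homomorphisms $G \,\longrightarrow\, {\rm GL}_n$ and their push-forwards, so that Theorem~\ref{t:push} and its consequence Corollary~\ref{c:conj} become applicable. For \ref{i:associso}, we may assume $\dim V_1 \,=\, \dim V_2 \,=\, n$, since otherwise both sides fail by rank considerations. Choose bases and let $h_1,\, h_2\,:\,G \,\longrightarrow\, {\rm GL}_n$ be the corresponding representation homomorphisms. By the identification recalled just before the corollary, $P \times_k^G V_1 \,\cong\, P \times_k^G V_2$ as $\sO_X$\nd modules if and only if the push-forwards of $P$ along $h_1$ and $h_2$ are isomorphic as principal ${\rm GL}_n$\nd bundles. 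Since $P$ is minimal, Theorem~\ref{t:push} then yields that this is equivalent to $h_1$ and $h_2$ being conjugate under ${\rm GL}_n(k)$, which is precisely the condition that $V_1 \,\cong\, V_2$ as representations of $G$.

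For \ref{i:associndec}, since $G$ is reductive and $\mathrm{char}(k) \,=\, 0$, every finite-dimensional $G$\nd module is semisimple, so \emph{indecomposable} and \emph{irreducible} coincide for $V$. If $V \,=\, V_1 \oplus V_2$ with both summands non-zero, then $P \times_k^G V$ splits accordingly. Conversely, assume $V$ is irreducible and, for contradiction, that $\sV \,:=\, P \times_k^G V$ admits a decomposition $\sV' \oplus \sV''$ of positive ranks $r$ and $s$ with $r+s \,=\, n \,:=\, \dim V$. Fix a basis of $V$, let $h\,:\,G \,\longrightarrow\, {\rm GL}_n$ be the corresponding homomorphism, and let $P'$ be the push-forward of $P$ along $h$; then $P'$ is canonically isomorphic to the frame bundle $\underline{\Iso}_X(\sO_X^n,\,\sV)$, and by the dictionary in the paper the decomposition of $\sV$ yields a principal ${\rm GL}_r \times_k {\rm GL}_s$\nd subbundle $P_1$ of $P'$.

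Separately, the canonical map $P \,\longrightarrow\, P'$ identifies $P/\!\ker(h)$ with a principal $h(G)$\nd subbundle $P_0$ of $P'$. The group $h(G)$ is reductive as a quotient of the reductive group $G$, and $P_0$, being the push-forward of the minimal bundle $P$ along the surjection $G \,\longrightarrow\, h(G)$, is itself minimal. Applying Corollary~\ref{c:conj} inside the ambient principal ${\rm GL}_n$\nd bundle $P'$ to the reductive subbundles $P_0$ (minimal) and $P_1$ then produces an element $g \,\in\, {\rm GL}_n(k)$ with $g h(G) g^{-1} \,\subseteq\, {\rm GL}_r \times_k {\rm GL}_s$, equivalently $h(G) \,\subseteq\, g^{-1}({\rm GL}_r \times_k {\rm GL}_s) g$. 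The latter subgroup is the stabiliser in ${\rm GL}_n$ of the non-trivial direct sum decomposition $k^n \,=\, g^{-1}(k^r) \oplus g^{-1}(k^s)$, whose summands are thus $G$\nd stable and non-zero, contradicting irreducibility of $V$. The main obstacle is this hard direction of \ref{i:associndec}: one must set up the frame bundle $P'$ so that both the minimal subbundle $P_0$ coming from $P$ and the reduction $P_1$ coming from the assumed bundle decomposition sit inside it as reductive subbundles, so that Corollary~\ref{c:conj} can be invoked to transport the decomposition back to $V$.
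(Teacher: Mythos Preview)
Your proof is correct and follows essentially the same approach as the paper: for \ref{i:associso} you apply Theorem~\ref{t:push} to the push-forwards along the representation homomorphisms $G \to {\rm GL}_n$, and for the hard direction of \ref{i:associndec} you apply Corollary~\ref{c:conj} inside the frame bundle $P'$ with $P_0$ the minimal $h(G)$\nd subbundle and $P_1$ the ${\rm GL}_r \times_k {\rm GL}_s$\nd reduction coming from the assumed decomposition. The paper's proof is identical in substance, only more terse; your version spells out explicitly why $h(G)$ is reductive and why $P_0$ is minimal, and makes the easy direction of \ref{i:associndec} explicit.
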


\begin{proof}
\ref{i:associndec}\,
If $V$ is defined by $\rho\,:\,G \,\longrightarrow\,{\rm GL}_n$, apply Corollary~\ref{c:conj}
with ${\rm GL}_n$, $\rho(G)$, and ${\rm GL}_r \times_k {\rm GL}_s$
for $G$, $G_0$, and $G_1$ respectively. Furthermore, set the push forwards of
$P$ along $\rho$ and $G \,\longrightarrow\, \rho(G)$ in places of $P$ and $P_0$ respectively,
and also set a reduction of $P$ to ${\rm GL}_r\times_k {\rm GL}_s$ in place of $P_1$. Then
Corollary~\ref{c:conj} proves \ref{i:associndec}.

\ref{i:associso}\,
Apply Theorem~\ref{t:push} to push forwards of $P$ along the $k$\nd homomorphisms 
$G\,\longrightarrow\, {\rm GL}_n$ defining the representations $V_i$.
\end{proof}

Let $X$ be a $k$\nd scheme, $G$ an affine algebraic $k$\nd group, and $P$ a principal $G$\nd bundle over $X$ which has a principal subbundle
with reductive structure group. 
Then there exists a reductive $k$\nd subgroup $G_0$ of $G$ such that $P$ has a minimal principal $G_0$\nd subbundle $P_0$.
Suppose that $H^0(X,\,\sO_X)$ is a henselian local $k$\nd algebra with residue field $k$.
Then for each reductive $k$\nd subgroup $G_1$ of $G$, the set of isomorphism classes of principal $G_1$\nd subbundles of $P$
can be described in the following way. Denote by $T_G(G_0,\, G_1)$ the transporter from $G_0$ to $G_1$ in $G$.
Then
\begin{equation*}
T_G(G_0,\, G_1)^{-1}
\end{equation*}
is the closed subscheme of $G$ consisting of those points $t$ for which $t^{-1}G_0t$ is contained in $G_1$.
We have an action by composition on $T_G(G_0,\, G_1)^{-1}$ of the centraliser $Z_G(G_0)$ of $G_0$ in $G$ on the left and $G_1$ on the right. 
For each $k$\nd point $t$ of $T_G(G_0,\, G_1)^{-1}$, we have a principal $G_1$\nd subbundle 
\begin{equation*}
P_0tG_1
\end{equation*}
of $G$, given by the image of $P_0t \times_k G_1$ under the right action $P \times_k G \, \longrightarrow \, P$ of $G$ on $P$.
It is the unique principal $G_1$\nd subbundle of $P$ containing the principal $t^{-1}G_0t$--subbundle $P_0t$,
and it is the push forward of $P_0$ along the $k$\nd homomorphism $G_0 \, \longrightarrow \, G_1$ that sends $g_0$ to $t^{-1}g_0t$.
We have
\begin{equation*}
P_0tG_1 \,\,= \,\,P_0t'G_1
\end{equation*}
for $k$\nd points $t$ and $t'$ of $T_G(G_0,\, G_1)^{-1}$ if and only if
\begin{equation*}
t' = tg_1
\end{equation*}
for some $k$\nd point $g_1$ of $G_1$.
By Theorem \ref{t:push}, $P_0tG_1$ and $P_0t'G_1$ are isomorphic if and only if the $k$\nd homomorphisms $G_0 \, \longrightarrow \, G_1$
that send $g_0$ to $t^{-1}g_0t$ and to $t'{}^{-1}g_0t'$ are conjugate, and hence if and only if 
\begin{equation*}
t' = ztg_1
\end{equation*}
with $g_1$ a $k$\nd point of $G_1$ and $z$ a $k$\nd point of the centraliser $Z_G(G_0)$ of $G_0$ in $G$.
By Corollary~\ref{c:conj}, every principal $G_1$\nd subbundle of $P$ is isomorphic to $P_0tG_1$ for some $k$\nd point $t$ of $T_G(G_0,\, G_1)^{-1}$.
The isomorphism classes of principal $G_1$\nd subbundles of $P$ are thus parametrised by the set
\begin{equation}\label{e:Tset}
Z_G(G_0)(k) \backslash T_G(G_0,\, G_1)^{-1}(k)/G_1(k),
\end{equation} 
with the class of $t$ in $T_G(G_0,\, G_1)^{-1}(k)$ corresponding to the isomorphism class of $P_0tG_1$.
The set \eqref{e:Tset} depends only on $G$, $G_0$ and $G_1$.

Since $G_0$ is reductive, the $k$\nd scheme $\underline{\Hom}_k(G_0,\,G_1)$ of $k$\nd homomorphisms from $G_0$ to $G_1$ exists \cite[Proposition~1.3.3(i)]{O10},
and is the disjoint union of open affine subschemes which are homogeneous under the action by conjugation of $G_1$ \cite[Proposition~1.3.3(ii)]{O10}. 
The morphism of $k$\nd schemes from $T_G(G_0,\, G_1)^{-1}$ to $\underline{\Hom}_k(G_0,\,G_1)$
that sends $t$ to $g_0 \, \longmapsto \, t^{-1}g_0t$ is compatible with the actions of $Z_G(G_0)$ and $G_1$,
where $Z_G(G_0)$ acts trivially on $\underline{\Hom}_k(G_0,\,G_1)$.
It thus induces an embedding
\begin{equation*}
Z_G(G_0)(k) \backslash T_G(G_0,\, G_1)^{-1}(k)/G_1(k) \,\longrightarrow \, \Hom_k(G_0,\,G_1)/G_1(k)
\end{equation*}
of \eqref{e:Tset} into the set of $k$\nd homomorphisms up to conjugacy, with image the classes of those $G_0 \, \longrightarrow \, G_1$
whose composite with the embedding $G_1 \, \longrightarrow \, G$ is conjugate to the embedding $G_0 \, \longrightarrow \, G$.
It follows that $T_G(G_0,\, G_1)^{-1}$ is a finite disjoint union of open $k$\nd subschemes which are stable under 
$Z_G(G_0) \times_k G_1{}\!^\mathrm{op}$ and on whose $k$\nd points $(Z_G(G_0) \times_k G_1{}\!^\mathrm{op})(k)$ acts transitively. 
In particular, the set \eqref{e:Tset} is finite.

By generic flatness, $T_G(G_0,\, G_1)^{-1}$ is flat over $\underline{\Hom}_k(G_0,\,G_1)$, and hence smooth over it because 
the fibre above any $k$\nd point is isomorphic to $Z_G(G_0)$.
Thus $T_G(G_0,\, G_1)^{-1}$ is smooth over $k$ and hence reduced.
It follows that $T_G(G_0,\, G_1)^{-1}$ is a finite disjoint union of open homogeneous $(Z_G(G_0) \times_k G_1{}\!^\mathrm{op})$\nd subschemes.
Explicitly, if $t$ is a $k$\nd point of $T_G(G_0,\, G_1)^{-1}$, the composition morphism from $Z_G(G_0) \times_k tG_1$ to $T_G(G_0,\, G_1)^{-1}$
induces an isomorphism, compatible with the actions of $Z_G(G_0)$ and $G_1$, from
\begin{equation}\label{e:ZtG}
Z_G(G_0) \times_k^{Z_{tG_1t^{-1}}(G_0)} tG_1
\end{equation}
to the open homogeneous subscheme containing $t$.

Let $f\,:\,X' \,\longrightarrow\, X$ be a morphism of $k$\nd schemes with $H^0(X',\,\sO_{X'})$ also a henselian local $k$\nd algebra with residue field $k$.
For some reductive $k$\nd subgroup $G'{}\!_0$ of $G$ there exists a minimal principal $G'{}\!_0$\nd subbundle $P'{}\!_0$ of $f^*P_0$.
If isomorphism classes principal $G_1$\nd subbundles of $f^*P$ are parametrised similarly to the above with $G_0$ and $P_0$ replaced by $G'{}\!_0$ and $P'{}\!_0$,
then the map
\begin{equation}\label{e:Tmap}
Z_G(G_0)(k) \backslash T_G(G_0,\, G_1)^{-1}(k)/G_1(k) \, \longrightarrow \, Z_G(G'{}\!_0)(k) \backslash T_G(G'{}\!_0,\, G_1)^{-1}(k)/G_1(k)
\end{equation} 
defined by the embedding of $T_G(G_0,\, G_1)^{-1}$ into $T_G(G'{}\!_0,\, G_1)^{-1}$ corresponds to pullback of isomorphism classes along $f$.
In particular, if $f^*P_0$ is minimal, then $G'{}\!_0 \,=\, G_0$ and \eqref{e:Tmap} is bijective, so that for every reductive $k$\nd subgroup 
$G_1$ of $G$, pullback along $f$ defines a bijection from isomorphism classes of principal $G_1$\nd subbundles of $P$ 
to isomorphism classes of principal $G_1$\nd subbundles of $f^*P$.

Let $X$ be a non-empty connected proper $k$\nd scheme.
Then $H^0(X,\,\sO_X)$ is a finite local $k$\nd algebra.
Though it will not be needed in what follows, an explicit description can be given of the set of principal subbundles with reductive 
structure group of a given principal bundle $P$ over $X$, the fibration of this set over the set of isomorphism class of such subbundles, 
and the dependence of the fibres on $X$ and $P$.
This requires a semidirect product decomposition of gauge groups, which we first briefly recall.
If $G$ is an affine algebraic $k$\nd group and $P$ is a principal $G$\nd bundle, then for any affine $G$\nd scheme $Z$ the $k$\nd scheme
$\underline{H}^0(X,\,P \times_k^G Z)$ of cross-sections of $P \times_k^G Z$ over $X$, i.e.,\ the Weil restriction of $P \times_k^G Z$ from $X$ to $k$,
exists and is affine and of finite type \cite[Proposition~14.2]{O19}. 
In particular if $G'$ is an affine algebraic $k$\nd group on which $G$ acts by group automorphisms, then
\begin{equation*}
\underline{H}^0(X,\,P \times_k^G G')
\end{equation*}
is an affine algebraic $k$\nd group with Lie algebra $H^0(X,\,P \times_k^G \mathfrak{g}')$, where $\mathfrak{g}'$ is the Lie algebra of $G'$.
It contains as a $k$\nd subgroup the $k$\nd subgroup $G'{}^G$ of invariants of $G'$ under $G$.
Suppose that $G$ is reductive and that $P$ is minimal.
Then \cite[Theorem~14.5]{O19} we have a semidirect product decomposition
\begin{equation}\label{e:semi}
\underline{H}^0(X,\,P \times_k^G G')\, =\, U_G(P,\,\mathfrak{g}') \rtimes_k G'{}^G
\end{equation} 
in which \cite[Lemma~14.4]{O19} $U_G(P,\,\mathfrak{g}')$ is a normal unipotent $k$\nd subgroup of $\underline{H}^0(X,\,P \times_k^G G')$ 
characterised as follows: it is the unique connected $k$\nd subgroup with Lie algebra the ideal 
\begin{equation*}
{}^\mathrm{rad}H^0(X,\,P \times_k^G \mathfrak{g}')
\end{equation*}
of $H^0(X,\,P \times_k^G \mathfrak{g}')$, where ${}^\mathrm{rad}H^0(X,\,\sV)$ for any $\sO_X$\nd module $\sV$ denotes the kernel on the right 
of the pairing 
\begin{equation*}
\Hom_{\sO_X}(\sV,\sO_X) \otimes_k H^0(X,\,\sV)\, \longrightarrow \, H^0(X,\,\sO_X) \, \longrightarrow \, k,
\end{equation*}
with the first arrow defined by evaluation and the second by projection onto the residue field.
Further ${}^\mathrm{rad}H^0(X,\,\sV)$ is functorial in $\sV$ and hence $U_G(P,\,\mathfrak{g}')$ is functorial in $G'$.
If $G'$ is also reductive, then $G'{}^G$ is reductive and $U_G(P,\,\mathfrak{g}')$ is the unipotent radical of $\underline{H}^0(X,\,P \times_k^G G')$. 

Now let $G$ be an affine algebraic $k$\nd group, $P$ a principal $G$\nd bundle over $X$, and $G_1$ a reductive $k$\nd subgroup of $G$.
Then $G/G_1$ is an affine $G$\nd scheme of finite type, and the $k$\nd scheme
\begin{equation*}
S_{G_1}(P) \,=\, \underline{H}^0(X,\,P/G_1) \,=\, \underline{H}^0(X,\,P \times_k^G (G/G_1))
\end{equation*}
of principal $G_1$\nd subbundles of $P$ exists and is affine of finite type.
The $k$\nd group 
\begin{equation*}
A_G(P) \,=\, \underline{H}^0(X,\,\underline{\Aut}_G(P)) \,= \,\underline{H}^0(X,\,P \times_k^G G_\mathrm{conj})
\end{equation*}
of $G$\nd automorphisms of $P$ also exists and is affine of finite type, with the action of $A_G(P)$ on $S_{G_1}(P)$ that given by
the action of $G_\mathrm{conj}$ on $G/G_1$. 
Suppose that $S_{G_1}(P)$ is non-empty. 
Then $S_{G_1}(P)$ has a $k$\nd point.
Let $s$ be such a $k$\nd point, corresponding to the principal $G_1$\nd subbundle $P_1$ of $P$.
The stabiliser of $s$ under the action of $A_G(P)$ is the $k$\nd subgroup $A_{G_1}(P_1)$ of $A_G(P)$.
Thus we have an immersion 
\begin{equation}\label{e:imm}
A_G(P)/A_{G_1}(P_1) \, \longrightarrow \, S_{G_1}(P)
\end{equation}
of $A_G(P)$\nd schemes which sends the base $k$\nd point of $A_G(P)/A_{G_1}(P_1)$ to $s$.
If $\mathfrak{g}$ and $\mathfrak{g}_1$ are the Lie algebras of $G$ and $G_1$, then \eqref{e:imm}
induces on the tangent spaces at the base $k$\nd point and at $s$ the $k$\nd linear map
\begin{equation*}
H^0(X,\,P_1 \times_k^{G_1} \mathfrak{g})/H^0(X,\,P_1 \times_k^{G_1} \mathfrak{g}_1) \, \longrightarrow \,
H^0(X,\,P_1 \times_k^{G_1} \mathfrak{g}/\mathfrak{g}_1).
\end{equation*} 
Since $G_1$ is reductive, the projection from $\mathfrak{g}$ to $\mathfrak{g}/\mathfrak{g}_1$ is a retraction in the category
of representations of $G_1$.
It follows that \eqref{e:imm} induces an isomorphism at the tangent space at the base $k$\nd point and hence at every $k$\nd point.
This shows that the immersion \eqref{e:imm} is open.
Thus $S_{G_1}(P)$ is a finite disjoint union of open orbits under $A_G(P)$. 

Let $G_0$ be a reductive $k$\nd subgroup of $G$ for which $P$ has a minimal principal $G_0$\nd subbundle $P_0$.
Then the orbits of $S_{G_1}(P)$ under $A_G(P)$ are parametrised by the finite set \eqref{e:Tset}, with the orbit above 
the class of the $k$\nd point $t$ of $T_G(G_0,\, G_1)^{-1}$ that containing the $k$\nd point corresponding to $P_0tG_1$. 
We have a semidirect product decomposition
\begin{equation*}
A_G(P) \,=\, \underline{H}^0(X,\,P_0 \times_k^{G_0} G_\mathrm{conj}) \,=\, U_{G_0}(P_0,\,\mathfrak{g}) \rtimes_k Z_G(G_0)
\end{equation*}
of the form \eqref{e:semi}.
The stabiliser of the $k$\nd point of $S_{G_1}(P)$ corresponding to $P_0tG_1$ is the $k$\nd subgroup
\begin{equation*}
A_{G_1}(P_0tG_1) \,=\, \underline{H}^0(X,\,P_0 \times_k^{G_0} tG_1{}_\mathrm{conj}t^{-1})
\,=\, U_{G_0}(P_0,\,t\mathfrak{g}_1t^{-1}) \rtimes_k Z_{tG_1t^{-1}}(G_0),
\end{equation*}
of $A_G(P)$, where the embedding respects the semidirect product decompositions.
Further $U_{G_0}(P_0,\,t\mathfrak{g}_1t^{-1})$ is the unipotent
radical of $A_{G_1}(P_0tG_1)$ because $tG_1t^{-1}$ is reductive.
By the structure \eqref{e:ZtG} of the open orbits of $T_G(G_0,\, G_1)^{-1}$, the $Z_G(G_0)$\nd scheme $T_G(G_0,\, G_1)^{-1}/G_1$ is the disjoint
union of open homogeneous $Z_G(G_0)$\nd subschemes, parametrised by \eqref{e:Tset}, with the stabiliser of the $k$\nd point $\overline{t}$ 
corresponding to $tG_1$ the $k$\nd subgroup $Z_{tG_1t^{-1}}(G_0)$ of $Z_G(G_0)$. 
It follows that there exist unique morphisms 
\begin{equation*}
T_G(G_0,\, G_1)^{-1}/G_1 \, \xrightarrow{\,\,i\,\,} \, S_{G_1}(P) \, \xrightarrow{\,\,r\,\,} \, T_G(G_0,\, G_1)^{-1}/G_1
\end{equation*}
of $k$\nd schemes with $r \circ i$ is the identity, which respect the parametrisations by \eqref{e:Tset}, such that $i$ together with the embedding
of $Z_G(G_0)$ into $A_G(P)$ and $r$ together with the projection from $A_G(P)$ onto $Z_G(G_0)$ are compatible with the 
actions of $Z_G(G_0)$ and $A_G(P)$, and such that $i$ sends $\overline{t}$ to the $k$\nd point corresponding to $P_0tG_1$.
The fibre of $r$ above $\overline{t}$ is isomorphic as a $U_{G_0}(P_0,\,\mathfrak{g})$\nd scheme to
\begin{equation*}
U_{G_0}(P_0,\,\mathfrak{g})/U_{G_0}(P_0,\,t\mathfrak{g}_1t^{-1}),
\end{equation*}
and $U_{G_0}(P_0,\,t\mathfrak{g}_1t^{-1})$ is the fibre at $\overline{t}$ of a smooth group scheme over $T_G(G_0,\, G_1)^{-1}/G_1$
with restriction
\begin{equation*}
Z_G(G_0) \times_k^{Z_{tG_1t^{-1}}(G_0)} U_{G_0}(P_0,\,t\mathfrak{g}_1t^{-1})
\end{equation*} 
above the homogeneous component $Z_G(G_0)/Z_{tG_1t^{-1}}(G_0)$.

Proposition~\ref{p:spl} below is a particular case of
\cite[Corollary~10.14]{O19}
where $k$ is algebraically closed, $H \,=\, X$, and $G$ is of finite type.
It can be proved by an argument almost identical to that used for its analytic analogue Proposition~\ref{p:splan} below.

\begin{prop}\label{p:spl}
Let $X$ be a $k$\nd scheme, $G$ an affine algebraic $k$\nd group and $P$
a principal $G$\nd bundle over $X$.
Then $P$ has a principal $G_0$\nd subbundle for some reductive $k$\nd subgroup $G_0$ of $G$ if and only if 
the functor $P \times_k^G -$ on finite-dimensional representations of $G$ splits every short exact sequence.
\end{prop}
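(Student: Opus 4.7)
The plan is to translate the statement into a property of the tensor functor
\begin{equation*}
V \longmapsto P \times_k^G V
\end{equation*}
from finite-dimensional representations of $G$ to vector bundles on $X$, and for the nontrivial direction to invoke the splitting theorem for tensor categories of \cite{AndKah}, \cite{O} and \cite{O19}.

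For the ``only if'' direction, I would start with a principal $G_0$\nd subbundle $P_0$ of $P$ with $G_0$ reductive and observe that, since $P$ is the push forward of $P_0$ along the inclusion $e\,:\,G_0\,\hookrightarrow\, G$, there is a natural isomorphism $P \times_k^G V \,\iso\, P_0 \times_k^{G_0} V$ functorial in the finite-dimensional representation $V$ of $G$. For any short exact sequence $0 \,\longrightarrow\, V_1 \,\longrightarrow\, V_2\, \longrightarrow\, V_3\, \longrightarrow\, 0$ of such representations, restriction along $e$ gives a short exact sequence of $G_0$\nd representations which splits equivariantly by reductivity of $G_0$; applying $P_0 \times_k^{G_0} -$ to this splitting yields the required splitting of the induced sequence of vector bundles on $X$.

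For the ``if'' direction, I would use the dictionary between principal bundles over $X$ and transitive affine groupoids of finite type over $X$ recalled above, and work with $K \,=\, \underline{\Iso}_G(P)$. Under this dictionary, principal $G_0$\nd subbundles of $P$ with $G_0$ reductive correspond to reductive transitive affine subgroupoids of $K$. The functor $V \,\longmapsto\, P \times_k^G V$ factors through the category of representations of $K$, and the splitting hypothesis amounts to saying that every short exact sequence of such representations of $K$ arising from representations of $G$ splits after forgetting the $K$\nd action. This is exactly the form of data to which \cite[Corollary~10.14]{O19} applies, producing a reductive transitive affine subgroupoid of $K$, and hence, via the dictionary, the desired $G_0$ together with a principal $G_0$\nd subbundle of $P$.

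The main obstacle is the ``if'' direction, which rests essentially on the tensor\nd categorical splitting theorem of \cite{AndKah}, \cite{O} and \cite{O19}: there is no direct way to extract a reductive structure from splitting data alone, since the reductive subgroup in question is highly non\nd unique, only the conjugacy class of the pair $(G_0,\,P_0)$ being determined by $P$ (as Corollary~\ref{c:conj} illustrates in a related special case).
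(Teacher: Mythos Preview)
Your proposal is correct and, for the ``if'' direction, lands on the same external reference the paper uses: the paper simply records that Proposition~\ref{p:spl} is the particular case of \cite[Corollary~10.14]{O19} with $k$ algebraically closed, $H = X$, and $G$ of finite type, and gives no further argument. Your detour through the groupoid $K = \underline{\Iso}_G(P)$ is unnecessary here, since the cited result is already phrased for principal bundles; the groupoid dictionary is used elsewhere in the paper (for Theorem~\ref{t:push} via \cite[Corollary~12.11(i)]{O19}), not for this proposition.

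Two remarks on what you might adjust. First, your framing of the ``if'' direction as resting on the tensor\nd categorical splitting theorem of \cite{AndKah}, \cite{O}, \cite{O19} is somewhat off target: that theorem is what drives Theorem~\ref{t:min} (the minimality criterion), whereas Proposition~\ref{p:spl} admits a more elementary proof. Indeed the paper gives, for the analytic analogue Proposition~\ref{p:splan}, a self-contained argument that it says works verbatim for Proposition~\ref{p:spl}: write $G = U \rtimes_k G_0$ with $U$ the unipotent radical and $G_0$ a Levi factor, and induct on the length of the lower central series of $U$. At the inductive step one has $G' = U_n \rtimes_k G_0$ with $U_n$ abelian unipotent, and the existence of a $G_0$\nd reduction of the relevant $G'$\nd bundle is encoded in the splitting of a single explicit short exact sequence of vector bundles; using \eqref{e:Ext} one lifts the underlying $G'$\nd extension to a $G$\nd extension and applies the splitting hypothesis. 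This avoids both the external citation and the splitting theorem. Second, your ``only if'' argument is fine and matches the paper's one\nd line justification that $P \times_k^G -$ factors through $P_0 \times_k^{G_0} -$.
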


Let $X$ be a scheme with $\sO_{X,x}$ an integral domain for every $x \,\in\, X$. Then the 
irreducible components of $X$ are disjoint from one another. If $X$ is also locally noetherian, 
then the set of its irreducible components is locally finite, so that the irreducible 
components are open in $X$, and $X$ is their disjoint union.

\begin{thm}\label{t:redsub}
Let $f\,:\,X' \,\longrightarrow\, X$ be a morphism of $k$\nd schemes, $G$ an affine algebraic
$k$\nd group and $P$
a principal $G$\nd bundle over $X$. Suppose that either of the following two conditions holds:
\begin{enumerate}
\renewcommand{\theenumi}{(\alph{enumi})}
\item\label{i:redsublf}
$f_*\sO_{X'}$ is a locally free $\sO_X$\nd module of finite type which is nowhere $0$;
\item\label{i:redsubprop} 
$X$ is locally noetherian and normal, and $f$ is proper and surjective.
\end{enumerate}
Then $P$ has a principal $G_0$\nd subbundle for some reductive $k$\nd subgroup $G_0$ of $G$ if and only if $f^*P$ does. 
\end{thm}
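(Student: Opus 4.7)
The ``only if'' direction is immediate, since pullback of a principal $G_0$\nd subbundle of $P$ is a principal $G_0$\nd subbundle of $f^*P$. For the converse, my plan is to use Proposition~\ref{p:spl} to translate the question into one about splitting of short exact sequences of associated vector bundles, and then handle the two cases separately.

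For case~\ref{i:redsublf}, given a short exact sequence $0 \to V' \to V \to V'' \to 0$ of finite-dimensional representations of $G$, the associated sequence
\begin{equation*}
0 \longrightarrow P \times_k^G V' \longrightarrow P \times_k^G V \longrightarrow P \times_k^G V'' \longrightarrow 0
\end{equation*}
of locally free $\sO_X$\nd modules has extension class in
$\Ext^1_{\sO_X}(P \times_k^G V'' ,\,P \times_k^G V') = H^1(X,\,\sW)$,
where $\sW = P \times_k^G ((V'')^\vee \otimes V')$ is locally free of finite type. By hypothesis the pulled-back sequence on $X'$ splits, so the class vanishes in $H^1(X',\,f^*\sW)$. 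Since $\sO_X \,\longrightarrow\, f_*\sO_{X'}$ is locally free of finite type and nowhere zero, its rank is a positive-integer-valued locally constant function, hence invertible in $k$ (characteristic $0$), so Lemma~\ref{l:inj} applies and the class already vanishes on $X$. Proposition~\ref{p:spl} then gives the reductive subbundle.

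For case~\ref{i:redsubprop}, I reduce to case~\ref{i:redsublf}. Since $X$ is locally noetherian and normal, its irreducible components are open and disjoint (as noted just before the statement), so we may assume $X$ is irreducible. By properness and surjectivity of $f$, some irreducible component $X_1'$ of $X'$ maps onto $X$; replacing $X'$ with $X_1'$ equipped with its reduced structure (and restricting the reductive subbundle of $f^*P$ accordingly), we may suppose $X'$ is reduced and irreducible. Lemma~\ref{l:restr} then furnishes an open $U \subset X$ with complement of codimension at least two such that $(f_*\sO_{X'})|_U$ is locally free; it is nowhere zero since the canonical injection $\sO_X \hookrightarrow f_*\sO_{X'}$ shows each stalk is nonzero. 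Let $j\,:\,U \hookrightarrow X$ denote the inclusion and $f_U\,:\,f^{-1}(U) \to U$ the restriction of $f$. Case~\ref{i:redsublf} applied to $f_U$, with $P|_U$ and the restriction of the given reductive subbundle of $f^*P$, shows that $P|_U$ has a reductive subbundle.

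The last step is to extend this from $U$ to $X$, which I expect to be the main subtlety but which is cleanly handled by Lemma~\ref{l:iso}. By Proposition~\ref{p:spl} it suffices to show that for any short exact sequence of representations, the associated extension class $\xi \in H^1(X,\,\sW)$ vanishes; by the previous paragraph we know $j^*\xi = 0$ in $H^1(U,\,j^*\sW)$. Using the factorisation \eqref{e:ifac} for the morphism $j$, the pullback $H^1(X,\,\sW) \to H^1(U,\,j^*\sW)$ factors as
\begin{equation*}
H^1(X,\,\sW) \longrightarrow H^1(X,\,j_*j^*\sW) \xrightarrow{\,\,j_{j^*\sW}\,\,} H^1(U,\,j^*\sW),
\end{equation*}
where the second arrow is always injective. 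Since $\sW$ is locally free of finite type and $\sO_X \to j_*\sO_U$ is an isomorphism by Lemma~\ref{l:iso}, the projection-formula identification $j_*j^*\sW = \sW \otimes_{\sO_X} j_*\sO_U = \sW$ makes the first arrow an isomorphism, so the composite is injective. Hence $\xi = 0$, and Proposition~\ref{p:spl} yields the required reductive subbundle of $P$.
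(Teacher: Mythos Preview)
Your proof is correct and follows essentially the same approach as the paper's. The one cosmetic difference is in the final step: where you unwind the injectivity of $H^1(X,\sW)\to H^1(U,j^*\sW)$ via the projection formula $j_*j^*\sW \cong \sW\otimes_{\sO_X} j_*\sO_U \cong \sW$, the paper simply observes that Lemma~\ref{l:iso} gives $j_*\sO_U=\sO_X$, so condition~\ref{i:redsublf} holds for $j$ itself, and then applies the already-proven case~\ref{i:redsublf} a second time (first to $q=f_U$, then to $j$). Your argument is what that second application amounts to once unpacked.
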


\begin{proof}
The ``only if'' part is immediate, even without \ref{i:redsublf} or \ref{i:redsubprop}.
To prove the ``if'' part, suppose first that \ref{i:redsublf} holds.
Since the formation of $P \times^G -$ commutes with pullback, it is enough, by
Proposition~\ref{p:spl}, to show that a short exact sequence of vector bundles
\begin{equation*}
0 \,\longrightarrow\, \sV' \,\longrightarrow\, \sV \,\longrightarrow\,
\sV'' \,\longrightarrow\, 0,
\end{equation*}
associated to $P$ for a short exact sequence of representations of $G$,
splits if its pullback to $X'$ does.
This follows from Lemma~\ref{l:inj} with $\sW \,=\, \sV' \otimes_{\sO_X} \sV''^\vee$, because
the obstruction to splitting lies in $H^1(X,\,\sV' \otimes_{\sO_X} \sV''{}^\vee)$.

Suppose now that \ref{i:redsubprop} holds. Then $X$ is the disjoint union $\coprod_\alpha 
X_\alpha$ of its irreducible components $X_\alpha$. Let $X'{}\!_\alpha$ be a reduced and 
irreducible closed subscheme of $X'$ with generic point a point of $X'$ above the generic point 
of the component $X_\alpha$. To prove the ``if'' part, we may, after replacing $X'$ by $\coprod_\alpha 
X'{}\!_\alpha$, suppose that $f^{-1}(X_\alpha)$ is reduced and irreducible for each $\alpha$. By 
Lemma~\ref{l:restr} applied to the $X_\alpha$, \ref{i:redsublf} is then satisfied with $f$ 
replaced by its restriction $q\,:\,U'\,\longrightarrow\,
U$ above some open subscheme $U$ of $X$ with complement 
everywhere of codimension at least two. By Lemma~\ref{l:iso}, \ref{i:redsublf} is also satisfied 
with $f$ replaced by the inclusion map $j\,:\,U \,\hookrightarrow\, X$.
Since by hypothesis $f^*P$ and hence $(j \circ 
q)^*P$ has a principal subbundle with reductive structure group, the required result for 
\ref{i:redsubprop} follows by applying the result for \ref{i:redsublf} with $q$ and $j$
substituted for $f$.
\end{proof}

Let $f\,:\,X' \,\longrightarrow\, X$ be a morphism of $k$\nd schemes for which either \ref{i:redsublf} or \ref{i:redsubprop}
of Theorem~\ref{t:redsub} holds.
Suppose that $X$ is non-empty and connected.
Then $X$ has a non-empty connected open subscheme $U$ such that the restriction to $U$ of $f_*\sO_{X'}$ is locally free of finite type and 
the restriction above $U$ of any connected finite \'etale cover $X_1$ of $X$ is connected:
if \ref{i:redsublf} holds we may take $U = X$ and if \ref{i:redsubprop} holds we may by Lemma~\ref{l:restr} take for $U$ 
any sufficiently small non-empty open subscheme of $X$, because any $X_1$ is locally noetherian and normal and hence irreducible. 
If $X_1$ is a non-empty connected finite \'etale cover of $X$ whose pullback onto $X'$
has a cross-section,
then the restriction of $f$ above $U$ factors through the restriction $U_1$ of $X_1$ above $U$, so that if $U_1 = \Spec(\sR)$, then $f_*\sO_{X'}|U$
has a structure of $\sR$\nd module.
Thus $f_*\sO_{X'}|U$ is the push forward along $U_1 \, \xrightarrow{\,\,\,} \, U$ of a quasi-coherent $\sO_{U_1}$\nd module $\sV$, necessarily
locally free of constant rank.
The rank $r$ of $f_*\sO_{X'}|U$ is then the product of the degree of $X_1$ over $X$ and the rank of $\sV$.
Suppose that $X'$ is also non-empty and connected.
Then if $x'$ is a geometric point of $X'$ with $f(x') \,= \,x$, it follows that the continuous homomorphism
\begin{equation}\label{e:fhom}
\pi_1(X',\, x') \, \xrightarrow{\,\,\,\,} \, \pi_1(X,\,x)
\end{equation}
of profinite fundamental groups induced by $f$ has image in $\pi_1(X,\,x)$ of finite index dividing $r$.
It follows that $f$ factors essentially uniquely as a $k$\nd morphism $X' \,\longrightarrow\, X_1$ with $X_1$ connected
which induces a surjection on fundamental groups, followed by a finite \'etale $k$\nd morphism $X_1 \,\longrightarrow\, X$.
In the case where \ref{i:redsubprop} of Theorem~\ref{t:redsub} holds for $f$, it holds with $f$ replaced by 
$X' \,\longrightarrow\, X_1$, and, at least when $f$ is quasi-compact and quasi-separated, similarly in the case where \ref{i:redsublf}
of Theorem~\ref{t:redsub} holds for $f$.

\begin{lem}\label{l:rep}
Let $G$ be a reductive algebraic $k$\nd group, and let $G'\, \subset\, G$ be a reductive
$k$\nd subgroup.
Then the following conditions are equivalent:
\begin{enumerate}
\renewcommand{\theenumi}{(\alph{enumi})}
\item\label{i:repid} 
$G'$ contains the identity component of $G$;

\item\label{i:repfin}
there are only finitely many pairwise non-isomorphic irreducible representations $V$ of $G$ for
which $V^{G'} \,\ne\, 0$.
\end{enumerate}
\end{lem}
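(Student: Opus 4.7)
The plan is to analyse both conditions through the $G$-module structure of the coordinate ring $k[G/G']$, using that $G'$ is reductive (so $G/G'$ is affine and the taking of $G'$-invariants is exact) and that $G$ is reductive (so every finite-dimensional $G$-module is semisimple).

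First I would observe the standard Peter--Weyl-type decomposition $k[G] \,\cong\, \bigoplus_V V \otimes_k V^*$ of the regular representation as a $G \times_k G$\nd module, where the sum runs over a set of representatives $V$ of the isomorphism classes of irreducible representations of $G$, with $G \times_k G$ acting by left and right translations. Taking invariants under the right action of $G'$ gives
\begin{equation*}
k[G/G'] \,\cong\, \bigoplus_V V \otimes_k (V^*)^{G'}
\end{equation*}
as a $G$\nd module under the left action, and since $G'$ is reductive one has $\dim (V^*)^{G'} \,=\, \dim V^{G'}$. Thus the multiplicity of $V$ in $k[G/G']$ is $\dim V^{G'}$, and in particular $V^{G'} \,\ne\, 0$ is equivalent to $V$ occurring as a $G$\nd submodule of $k[G/G']$.

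Next I would connect finiteness of $k[G/G']$ with condition \ref{i:repid}. Since $G^0$ has finite index in $G$, condition \ref{i:repid} holds if and only if $G^0 \,=\, G'{}^0$, that is, if and only if $\dim G' \,=\, \dim G$, if and only if the quotient $G/G'$ is a finite $k$\nd scheme, if and only if $k[G/G']$ is finite-dimensional over $k$.

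To finish, for \ref{i:repid} $\Rightarrow$ \ref{i:repfin}, if $k[G/G']$ is finite-dimensional then at most finitely many pairwise non-isomorphic irreducibles $V$ appear as summands, since each has $\dim V \,\ge\, 1$. Conversely, for the contrapositive of \ref{i:repfin} $\Rightarrow$ \ref{i:repid}, if $G'$ does not contain $G^0$ then $k[G/G']$ is infinite-dimensional, while each irreducible $V$ appears with multiplicity $\dim V^{G'} \,\le\, \dim V \,<\, \infty$, so infinitely many pairwise non-isomorphic irreducibles must appear. I do not expect a serious obstacle here; the only point requiring some care is checking the identity $\dim (V^*)^{G'} \,=\, \dim V^{G'}$, which follows from reductivity of $G'$ by noting that taking the trivial isotypic component commutes with dualisation on finite-dimensional $G'$\nd modules.
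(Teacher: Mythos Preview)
Your argument is correct and follows essentially the same route as the paper: both reduce each of \ref{i:repid} and \ref{i:repfin} to the finite-dimensionality of $k[G]^{G'} = k[G/G']$, using the canonical $(G,G)$\nd bimodule decomposition of $k[G]$, with your version simply spelling out the details the paper leaves implicit. One minor remark: your identity $\dim (V^*)^{G'} = \dim V^{G'}$ is fine, but you could sidestep it altogether by noting that $V \mapsto V^*$ is a bijection on isomorphism classes of irreducibles, so finiteness of $\{V : (V^*)^{G'} \ne 0\}$ is immediately equivalent to finiteness of $\{V : V^{G'} \ne 0\}$.
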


\begin{proof}
The group $G$ acts by left and right translation on its coordinate $k$\nd algebra $k[G]$.
Both \ref{i:repid} and \ref{i:repfin} are equivalent to the finiteness of the $k$\nd algebra $k[G]^{G'}$ of invariants 
under right translation: for \ref{i:repid} because $G/G'$ is affine with coordinate algebra $k[G]^{G'}$, and for \ref{i:repfin}
by the canonical decomposition of $k[G]$ as a $(G,\,G)$\nd bimodule.
\end{proof}

Let $G$ be a reductive algebraic $k$\nd group, and let $P$ be a principal $G$\nd bundle over a 
$k$\nd scheme $X$. We say that $P$ is \emph{almost minimal} if a principal $G'$\nd subbundle of 
$P$ exists for a reductive $k$\nd subgroup $G'$ of $G$ only if $G'$ contains the identity 
component of $G$.
Any push forward of an almost minimal principal $G$\nd bundle over $X$ along a surjective $k$\nd homomorphism is almost minimal. 

\begin{lem}\label{l:almin}
Let $X$ be a connected $k$\nd scheme, $G$ a reductive algebraic $k$\nd group,
and $G_0\, \subset\, G$ a $k$\nd subgroup containing 
the identity component of $G$. Let $P$ be a principal $G$\nd bundle over $X$ and $P_0$
a principal $G_0$\nd subbundle of $P$. Then $P$ is almost minimal if and only if $P_0$ is
so.
\end{lem}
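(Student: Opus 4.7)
The strategy is to prove the two implications separately. The direction ``$P$ almost minimal $\,\Rightarrow\, P_0$ almost minimal'' is immediate: any principal $G''$\nd subbundle of $P_0$ with $G''$ reductive is, via the closed immersion $P_0 \,\hookrightarrow\, P$, a principal $G''$\nd subbundle of $P$ with the same structure group, so almost minimality of $P$ forces $G''$ to contain the identity component of $G$, which coincides with that of $G_0$.

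For the converse, write $G^0$ for the identity component of $G$, assume $P_0$ almost minimal, and let $P'$ be a principal $G'$\nd subbundle of $P$ with $G'$ reductive; the goal is to show $G^0 \,\subset\, G'$. Consider the morphism
\begin{equation*}
\phi\,:\,P_0 \times_X P' \,\longrightarrow\, G,\qquad (p_0,\,p') \,\longmapsto\, \text{the unique $h \in G$ with $p' = p_0 h$,}
\end{equation*}
which is a morphism of $k$\nd schemes because the $G$\nd torsor structure gives an isomorphism $P_0 \times_X P \,\iso\, P_0 \times_k G$ over $P_0$. A direct check shows $\phi$ is equivariant for the action of $G_0 \times_k G'$ on $G$ given by $(a_0,\,a')\cdot h \,=\, a_0^{-1} h a'$, whose orbits are the double cosets $G_0 \backslash G / G'$.

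The key observation is that $\phi$ lands in a single double coset. Indeed, since $G^0 \,\subset\, G_0$, the quotient $G/G_0$ is a finite discrete scheme, and hence so is $G_0 \backslash G / G'$; in particular each $G_0 g G'$ is open and closed in $G$. The $(G_0 \times_k G')$\nd invariant composite $\phi$ followed by $G \,\longrightarrow\, G_0 \backslash G / G'$ descends to a morphism from $X \,=\, (P_0 \times_X P')/(G_0 \times_k G')$ to the finite discrete scheme $G_0 \backslash G / G'$; connectedness of $X$ forces this morphism to be constant, so $\phi(P_0 \times_X P') \,\subset\, G_0 g G'$ for some $g \,\in\, G(k)$. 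This descent step, which is the only place where connectedness of $X$ enters, is the main obstacle in the proof; the remaining steps are routine étale-local checks.

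Translating the reference subbundle, $P_0 \cdot g \,\subset\, P$ is a principal $(g^{-1} G_0 g)$\nd subbundle, and $p \,\longmapsto\, p g$ is an isomorphism $P_0 \,\iso\, P_0 \cdot g$ compatible with the conjugation isomorphism $G_0 \,\iso\, g^{-1} G_0 g$; in particular $P_0 \cdot g$ is almost minimal as a $(g^{-1} G_0 g)$\nd bundle. I claim that the scheme-theoretic intersection $(P_0 \cdot g) \cap P'$ is a principal $(g^{-1} G_0 g \cap G')$\nd subbundle of $P_0 \cdot g$: étale-locally trivialising $P \,=\, U \times_k G$, $P_0 \,=\, U \times_k g_0 G_0$, and $P' \,=\, U \times_k g' G'$, the factorisation of $\phi$ through $G_0 g G'$ gives $g_0^{-1} g' \,\in\, G_0 g G'$, so $g_0 G_0 g \cap g' G'$ is a non-empty right coset $h_0 \cdot (g^{-1} G_0 g \cap G')$ of $g^{-1} G_0 g \cap G'$. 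The subgroup $g^{-1} G_0 g \cap G'$ contains the identity component of $G'$ (because $(G')^0 \,\subset\, G^0 \,\subset\, g^{-1} G_0 g$ and $(G')^0 \,\subset\, G'$), so its identity component equals $(G')^0$, which is reductive; hence in characteristic $0$, $g^{-1} G_0 g \cap G'$ is itself reductive. Almost minimality of $P_0 \cdot g$ now forces $g^{-1} G_0 g \cap G' \,\supset\, (g^{-1} G_0 g)^0 \,=\, G^0$, and therefore $G' \,\supset\, G^0$, as required.
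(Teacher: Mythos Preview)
Your proof is correct and arrives at the same endgame as the paper's: after translating $P_0$ by a suitable $g \in G(k)$, the intersection $(P_0 g) \cap P'$ is a principal $(g^{-1}G_0 g \cap G')$\nd subbundle of $P_0 g$ with reductive structure group, and almost minimality finishes. The difference is in how $g$ is produced. The paper pushes everything forward along $G \to \overline{G} = G/G^0$, first replaces $\overline{P}_0$ by a minimal (i.e.\ connected) subbundle, and then invokes the elementary conjugacy fact for principal bundles with \emph{finite} structure group over a connected base (discussed in the two paragraphs following Theorem~\ref{t:push}) to arrange $\overline{P}_0 \subset \overline{P}_1$ after conjugation. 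You instead use the division morphism $\phi\,:\,P_0 \times_X P' \,\longrightarrow\, G$ directly and observe that, since $G_0 \supset G^0$ makes each double coset $G_0 g G'$ open and closed, connectedness of $X$ forces $\phi$ to land in a single one. Your route is slightly more self-contained: it avoids the reduction-to-minimal step and the separate appeal to the finite-group case, and it makes explicit the reductivity of the intersection group (via $(G')^0 \subset g^{-1}G_0 g \cap G'$), which the paper's proof uses without comment.
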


\begin{proof}
The ``only if'' part is clear.
Conversely suppose that $P_0$ is almost minimal. 
Let $G_1$ be a reductive $k$\nd subgroup of $G$, and let $P_1$ be a
principal $G_1$\nd subbundle of $P$.
It is to be shown that $G_1$ contains the identity component $G^0$ of $G$.
For $i\,=\, 0,\,1$ denote by $\overline{G}_i$ the image of $G_i$
in the finite $k$\nd group $\overline{G} \,=\, G/G^0$. The push forward $\overline{P}_i$ of
$P_i$ along the quotient map $G_i\,\longrightarrow\,\overline{G}_i$
is a principal $\overline{G}_i$\nd subbundle of the push forward $\overline{P}$ of
$P$ along the quotient map $G \,\longrightarrow\,\overline{G}$. Furthermore, 
$P_0$ is the inverse image of $\overline{P}_0$ under the projection $P
\,\longrightarrow\, \overline{P}$.

Replacing $P_0$ by the inverse image in $P$ of a minimal principal subbundle of $\overline{P}_0$, we may suppose that $\overline{P}_0$ is minimal.
Replacing $G_0$ and $P_0$ by $gG_0g^{-1}$ and $P_0g^{-1}$ respectively for appropriate
$g \,\in\, G(k)$, we may further suppose, as in the two paragraphs following Theorem~\ref{t:push},
that $\overline{G}_1$ contains $\overline{G}_0$ and $\overline{P}_1$ contains $\overline{P}_0$.
The inverse image $P_0 \cap P_1$ of $\overline{P}_0$ under the smooth surjective morphism
$P_1\,\longrightarrow\, \overline{P}_1$ is then a principal
$(G_0 \cap G_1)$\nd subbundle of $P_0$.
Since $P_0$ is almost minimal, $G_0 \cap G_1$ and hence $G_1$ contains $G^0$, as required.
\end{proof}

If $X$ is a $k$\nd scheme for which $H^0(X,\,\sO_X)$ is a henselian local $k$\nd algebra, it 
follows from \cite[11.2(ii)]{O19}, with $\sC$ the tensor category $\Mod(X)$ of vector bundles 
over $X$, that $\Mod(X)$ has the Krull--Schmidt property, i.e.,\ that the commutative monoid 
under direct sum of isomorphism classes of objects of $\Mod(X)$ is free.

The case where \ref{i:redsublf} of Theorem~\ref{t:redsub} holds in Theorem~\ref{t:almin} below 
is equivalent to the case of \cite[15.5]{O19} where $k$ is algebraically closed, $H \,=\, X$,
and $K$ is of finite type; the proof for this case of Theorem~\ref{t:almin} is essentially the same.

\begin{thm}\label{t:almin}
Let $f\,:\,X'\,\longrightarrow\, X$ be a morphism of $k$\nd schemes, $G$
a reductive algebraic $k$\nd group and $P$ a principal $G$\nd bundle over $X$.
Suppose that $H^0(X,\,\sO_X)$ is henselian local $k$\nd algebra with residue field $k$.
Assume that either \ref{i:redsublf} or \ref{i:redsubprop} of Theorem~\ref{t:redsub} holds.
Then $P$ is almost minimal if and only if $f^*P$ is so.
\end{thm}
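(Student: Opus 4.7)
The plan is to follow the template of the proof of Theorem~\ref{t:redsub}: first establish the result under condition~\ref{i:redsublf}, and then deduce condition~\ref{i:redsubprop} by the same codimension-$\geq 2$ device used there. Specifically, under~\ref{i:redsubprop}, one decomposes $X$ into its irreducible components, replaces $X'$ by an appropriate reduced irreducible closed subscheme above each component, applies Lemma~\ref{l:restr} to obtain an open $U \subseteq X$ with complement of codimension $\geq 2$ such that the restriction $q \,:\, U' \,\longrightarrow\, U$ of $f$ satisfies~\ref{i:redsublf}, and uses Lemma~\ref{l:iso} to note that $j \,:\, U \,\hookrightarrow\, X$ also satisfies~\ref{i:redsublf} (as $j_*\sO_U = \sO_X$); applying the case~\ref{i:redsublf} of the theorem to $q$ and then to $j$ gives the conclusion for $f = j \circ q$.

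Under~\ref{i:redsublf}, the direction ``$f^*P$ almost minimal implies $P$ almost minimal'' is immediate: a reductive $G'$-subbundle of $P$ pulls back to a reductive $G'$-subbundle of $f^*P$, so almost minimality of $f^*P$ at once forces $G' \supseteq G^0$. For the converse direction under~\ref{i:redsublf}, choose a minimal reductive subbundle $P_0$ of $P$ with structure group $G_0$; almost minimality of $P$ gives $G_0 \supseteq G^0$. Then $f^*P_0$ is a reductive $G_0$-subbundle of $f^*P$, so by Lemma~\ref{l:almin} applied on each connected component of $X'$, $f^*P$ is almost minimal if and only if $f^*P_0$ is. Replacing $(P, G)$ by $(P_0, G_0)$, the task reduces to showing that, when $P$ itself is minimal, $f^*P$ is almost minimal.

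This reduced statement is the bundle-theoretic counterpart of \cite[Proposition~15.5]{O19}, and its proof transcribes the one given there via the bundle-groupoid dictionary of this section. The argument runs by contradiction: a reductive $G_1$-subbundle $P_1$ of $f^*P$ with $G_1 \not\supseteq G^0$ is shown to be incompatible with $P$ being minimal. The ingredients are Corollary~\ref{c:assoc} (for the minimal $P$, the vector bundle $\sE := P \times_k^G V$ on $X$ is indecomposable whenever $V$ is an irreducible $G$\nd representation, and non-isomorphic $V$'s give non-isomorphic $\sE$'s); the Krull--Schmidt property of $\Mod(X)$, valid since $H^0(X, \sO_X)$ is henselian local as recalled before the statement; Lemma~\ref{l:inj}, which gives injectivity of the pullback $H^1(X, \sW) \,\longrightarrow\, H^1(X', f^*\sW)$; and the normalised trace $f_*\sO_{X'} \,\longrightarrow\, \sO_X$, available in characteristic zero because the rank of $f_*\sO_{X'}$ is everywhere invertible. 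For a suitable irreducible representation $V$ of $G$, the hypothesis $G_1 \not\supseteq G^0$ provides a non-trivial decomposition $V|_{G_1} = W_1 \oplus W_2$ of $G_1$\nd representations, yielding a non-trivial decomposition
\begin{equation*}
f^*\sE = (P_1 \times_k^{G_1} W_1) \oplus (P_1 \times_k^{G_1} W_2).
\end{equation*}
Pushing forward under $f_*$ and using that $\sE$ is a direct summand of $f_*f^*\sE$ via the trace retraction (as in the proof of Lemma~\ref{l:inj}), one matches the indecomposable $\sE$ by Krull--Schmidt against a summand of one of the $f_*(P_1 \times_k^{G_1} W_i)$, and extracts from this matching a decomposition of $\sE$ itself, contradicting indecomposability. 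The main obstacle is in making this Krull--Schmidt/tensor-categorical matching precise, which is where the framework of \cite{O19} is essential.
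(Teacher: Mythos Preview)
Your reduction of case~\ref{i:redsubprop} to case~\ref{i:redsublf}, the ``if'' direction, and your preliminary reduction under~\ref{i:redsublf} to the case where $P$ is minimal (via Lemma~\ref{l:almin}) are all correct. The gap is in the contradiction argument that follows. Knowing that the indecomposable $\sE = P \times_k^G V$ is, by Krull--Schmidt, a direct summand of one of the $f_*(P_1 \times_k^{G_1} W_i)$ is not a contradiction: nothing prevents an indecomposable bundle from being a summand of some larger bundle, and no ``decomposition of $\sE$ itself'' can be extracted from this matching. Deferring to \cite{O19} does not save the sketch, since the argument there---and in the paper, which follows it---is not the one you describe.

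What the paper actually does hinges on a sharper reduction and on Lemma~\ref{l:rep}. Rather than only passing to a minimal subbundle of $P$, one takes a minimal $G_1$\nd subbundle $P_1$ of $P \times_X P_0$, where $P_0$ is the ${\rm GL}_n$\nd frame bundle of $f_*\sO_{X'}$; replacing $(G,P)$ by $(G_1,P_1)$ then yields $P$ minimal \emph{and} $f_*\sO_{X'} \cong P \times_k^G W$ for some representation $W$ of $G$. Given a reductive $G'$\nd subbundle of $f^*P$, one shows $G' \supseteq G^0$ not by a direct contradiction but by verifying condition~\ref{i:repfin} of Lemma~\ref{l:rep}: for each irreducible $V$ with $V^{G'} \ne 0$, the trivial summand of $V|_{G'}$ makes $\sO_{X'}$ a summand of $f^*(P \times_k^G V)$, so $P \times_k^G W \cong f_*\sO_{X'}$ is a summand of
\[
f_*f^*(P \times_k^G V) \;\cong\; (P \times_k^G V) \otimes_{\sO_X} (P \times_k^G W) \;\cong\; P \times_k^G (V \otimes_k W).
\]
Krull--Schmidt together with both parts of Corollary~\ref{c:assoc} then forces $W$ to be a summand of $V \otimes_k W$, hence $V$ a summand of $W \otimes_k W^\vee$; only finitely many irreducible $V$ can satisfy this.
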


\begin{proof}
The ``if'' part is immediate, even without any conditions on $X$ or $f$.

To prove the converse suppose that $P$ is almost minimal.
Consider first the case where \ref{i:redsublf} of Theorem~\ref{t:redsub} holds.
Since $X$ is connected, $f_*\sO_{X'}$ is of constant rank $n$, so that $X'$
is a disjoint union of at most $n$ connected components.
Replacing $X'$ by such a component, we may assume that $X'$ is connected.
If $k^n$ is the standard $n$\nd dimensional representation of ${\rm GL}_n$, there is 
a principal ${\rm GL}_n$\nd bundle $P_0$ over $X$ such that $f_*\sO_{X'}$ is isomorphic to
$P_0 \times_k^{{\rm GL}_n} k^n$.
Let $G_1$ be a reductive $k$\nd subgroup of $G \times_k{\rm GL}_n$ such that
$P \times_X P_0$ has a minimal principal 
$G_1$\nd subbundle $P_1$.
Then $P$ and $P_0$ are the push forwards of $P_1$ along the projections
$G_1 \,\longrightarrow\, G$ and $G_1 \,\longrightarrow\, {\rm GL}_n$ respectively.
Thus the image of $G_1$ in $G$ contains the identity component of $G$ so that by
Lemma~\ref{l:almin} $f^*P$ is almost minimal if $f^*P_1$ is so.
Further if $k^n$ is regarded a representation of $G_1$, then $f_*\sO_{X'}$ is isomorphic to $P_1 \times_k^{G_1} k^n$.
Replacing $G$ and $P$ by $G_1$ and $P_1$ respectively, we may
therefore suppose that $P$ is minimal and that there is an isomorphism
\begin{equation*}
f_*\sO_{X'} \,\iso\, P \times_k^G W
\end{equation*}
for some finite-dimensional representation $W$ of $G$.

Let $G'$ be a reductive $k$\nd subgroup of $G$ for which $f^*P$ has a principal $G'$\nd subbundle $P'$.
Then for any representation $V$ of $G$, we have an isomorphism
\begin{equation*}
f^*(P \times_k^G V) \,=\, (f^*P) \times_k^G V \,\iso\, P' \times_k^{G'} V.
\end{equation*}
Suppose that $V$ is irreducible and $V^{G'} \,\ne\, 0$.
Then $V$ has, as a representation of $G'$, the trivial direct summand $k$, so that $f^*(P \times_k^G V)$ has the direct summand $\sO_{X'}$.
Applying $f_*$ shows that $f_*\sO_{X'}$ is a direct summand of
\begin{equation*}
f_*f^*(P \times_k^G V) \,=\, (P \times_k^G V) \otimes_{\sO_X} f_*\sO_{X'}
\,\iso\, (P \times_k^G V) \otimes_{\sO_X} (P \times_k^G W),
\end{equation*}
so that $P \times_k^G W$ is a direct summand of $P \times_k^G(V \otimes_k W)$.
By the Krull--Schmidt property of vector bundles over $X$ and Corollary~\ref{c:assoc}, $W$ is thus a direct summand of $V \otimes_k W$.
Since $V$ is irreducible and $W \ne 0$, it follows that $V$ is a direct summand of $W \otimes_k W^\vee$.
This shows that \ref{i:repfin} and hence \ref{i:repid} of Lemma~\ref{l:rep} holds.
Hence $f^*P$ is almost minimal. 

Consider now the case where \ref{i:redsubprop} of Theorem~\ref{t:redsub} holds.
After replacing $X'$ by the reduced subscheme of an irreducible component, we may suppose that $X'$ is reduced and irreducible.
Let $U$ be an open subscheme of $X$ as in Lemma~\ref{l:restr}.
Then
\begin{equation*}
H^0(U,\,\sO_U) \,=\, H^0(X,\,\sO_X)
\end{equation*}
by Lemma~\ref{l:iso}.
By Lemma~ \ref{l:iso} and the case where \ref{i:redsublf} of Theorem~\ref{t:redsub}
holds applied to the inclusion map $U\, \hookrightarrow\, X$ and to the restriction of $f$ above $U$,
we conclude that the restriction of $f^*P$ to $f^{-1}(U)$ is almost minimal.
Consequently, $f^*P$ is almost minimal.
\end{proof}

If $X$ and $X'$ are non-empty connected $k$\nd schemes and $f\,:\,X'\,\longrightarrow\, X$ is a morphism such that 
either \ref{i:redsublf} or \ref{i:redsubprop} of Theorem~\ref{t:redsub} holds,
it has been seen that the homomorphism \eqref{e:fhom} induced by $f$ on fundamental groups is ``almost surjective''.
The condition on pullbacks of connected finite \'etale covers in Theorem~\ref{t:pull} is equivalent
to the surjectivity of \eqref{e:fhom}.

\begin{thm}\label{t:pull}
Let $f\,:\,X'\,\longrightarrow\, X$ be a morphism of $k$\nd schemes.
Suppose that $H^0(X,\,\sO_X)$ and $H^0(X',\,\sO_{X'})$ are henselian local $k$\nd algebras
with residue field $k$. 
Suppose further that $f^*Z$ is connected for every connected finite \'etale cover $Z$ of $X$, and that either 
\ref{i:redsublf} or \ref{i:redsubprop} of Theorem~\ref{t:redsub} holds.
Then for every an affine algebraic $k$\nd group $G$, reductive $k$\nd subgroup $G_1$ of $G$, and principal $G$\nd bundle $P$ over $X$,
pullback along $f$ defines a bijection from the set isomorphism classes of principal $G_1$\nd subbundles of $P$
to the set isomorphism classes of principal $G_1$\nd subbundles of $f^*P$.
\end{thm}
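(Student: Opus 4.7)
The plan is to reduce the claim to showing that, for a suitable choice of distinguished subbundle, pullback along $f$ preserves minimality, and then to isolate the only nontrivial content in the behaviour of the finite component group of the structure group.

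First, I would dispose of the case where $P$ has no principal subbundle with reductive structure group: by Theorem~\ref{t:redsub} neither does $f^*P$, so both sides of the claimed bijection are empty. In the remaining case, I would choose a reductive $k$\nd subgroup $G_0$ of $G$ and a minimal principal $G_0$\nd subbundle $P_0$ of $P$, which exist by the discussion preceding Theorem~\ref{t:redsub}. That discussion parametrises the isomorphism classes of principal $G_1$\nd subbundles of $P$ (respectively of $f^*P$) by the set \eqref{e:Tset} built from $(G_0,P_0)$ (respectively from a minimal principal $G_0'$\nd subbundle $P_0'$ of $f^*P_0$), and identifies the pullback along $f$ with the map \eqref{e:Tmap}; it is explicitly noted there that when $f^*P_0$ is itself minimal one may take $G_0' \,=\, G_0$ and $P_0' \,=\, f^*P_0$, so that \eqref{e:Tmap} is a bijection. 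Everything thus reduces to proving that $f^*P_0$ is a minimal principal $G_0$\nd bundle over $X'$.

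For this I would first apply Theorem~\ref{t:almin}: minimality of $P_0$ implies almost minimality, so $f^*P_0$ is almost minimal, and any reductive $G' \,\subset\, G_0$ admitting a principal $G'$\nd subbundle of $f^*P_0$ must contain the identity component $G_0^0$. The residual question is therefore purely about the finite component group $\Gamma \,=\, G_0/G_0^0$, and this is where the hypothesis on connected finite \'etale covers enters. Let $Q$ be the push forward of $P_0$ along the quotient $G_0 \,\longrightarrow\, \Gamma$; it is a principal $\Gamma$\nd bundle, hence a finite \'etale cover of the connected scheme $X$ (connected because $H^0(X,\sO_X)$ is henselian local with residue field $k$). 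Push forward along a surjective homomorphism preserves minimality, so $Q$ is minimal, which for a finite structure group over a connected base is equivalent to connectedness of $Q$, as recorded in the paragraph preceding Corollary~\ref{c:conj}. The hypothesis on $f$ then gives that $f^*Q$ is connected, and hence minimal as a principal $\Gamma$\nd bundle over the connected scheme $X'$.

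To conclude, suppose $G_0^0 \,\subset\, G' \,\subset\, G_0$ with $G'$ reductive and that $f^*P_0$ has a principal $G'$\nd subbundle $P'$. Taking quotients by $G_0^0$ yields a principal $(G'/G_0^0)$\nd subbundle $P'/G_0^0$ of $f^*P_0/G_0^0 \,=\, f^*Q$, and minimality of $f^*Q$ forces $G'/G_0^0 \,=\, \Gamma$, whence $G' \,=\, G_0$. The main delicate point is the bookkeeping here: one must verify that $P'/G_0^0$ really is a principal $(G'/G_0^0)$\nd subbundle of $f^*Q$ compatibly with the canonical identification $f^*(P_0/G_0^0) \,=\, (f^*P_0)/G_0^0$. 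This is formal from the definitions of push forward and pullback, and once it is in hand the proof is complete via the reduction of the first paragraph.
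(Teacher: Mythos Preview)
Your proof is correct and follows the paper's approach essentially step for step: dispose of the empty case via Theorem~\ref{t:redsub}, reduce via the parametrisation \eqref{e:Tset}--\eqref{e:Tmap} to showing that $f^*P_0$ is minimal, invoke Theorem~\ref{t:almin} for almost minimality, and then use the hypothesis on connected finite \'etale covers to handle the finite residue. The only cosmetic difference is in this last step: the paper notes that $P_0/G'{}\!_0$ is a finite \'etale cover of $X$ whose pullback $f^*(P_0/G'{}\!_0) = (f^*P_0)/G'{}\!_0$ has a cross-section, deduces from the connectedness hypothesis that $P_0/G'{}\!_0$ itself has a cross-section over $X$, and then contradicts minimality of $P_0$ on $X$, whereas you push forward to the component group $\Gamma$ and argue via minimality of $f^*Q$ on $X'$---the two arguments are equivalent, and your ``bookkeeping'' worry is exactly the identification $(f^*P_0)/G' \cong (f^*Q)/(G'/G_0^0)$ of quotient schemes, which is immediate.
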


\begin{proof}
If $P$ does not have a principal $G_0$\nd subbundle for any reductive $k$\nd subgroup $G_0$ of 
$G$, then by Theorem~\ref{t:redsub} neither does $f^*P$, so the sets of isomorphism classes in 
question are both empty. We may thus suppose that $P$ has a principal $G_0$\nd subbundle $P_0$ 
for some reductive $k$\nd subgroup $G_0$ of $G$. After replacing $G_0$ if necessary by a 
reductive $k$\nd subgroup, we may further suppose that $P_0$ is minimal. By the above, it then 
suffices to show that $f^*P_0$ is minimal.

Let $G'{}\!_0$ be a reductive $k$\nd subgroup of $G_0$ for which $f^*P_0$ has a principal 
$G'{}\!_0$\nd subbundle $P'{}\!_0$.
Then the scheme
\begin{equation*}
f^*(P_0/G'{}\!_0) \,\,= \,\,(f^*P_0)/G'{}\!_0
\end{equation*}
over $X'$ has a cross-section.
Since $f^*P_0$ is almost minimal by Theorem~\ref{t:almin},
we know that $G'{}\!_0$ contains the identity component of $G_0$.
Consequently, $P_0/G'{}\!_0$ is an \'etale cover of $X$.
Since $X$ is connected by the hypothesis on $H^0(X,\,\sO_X)$, and since pullback along
$f$ induces a bijection on connected components of \'etale covers,
it follows that $P_0/G'{}\!_0$ over $X$ has a cross-section, so that $P_0$ has a principal
$G'{}\!_0$\nd subbundle. Hence $G'{}\!_0 \,=\, G_0$ by minimality of $P_0$.
This shows that $f^*P_0$ is minimal.
\end{proof}

That some condition such as the one on $H^0(X,\,\sO_X)$ and $H^0(X',\,\sO_{X'})$ in 
Theorem~\ref{t:pull} is necessary can be seen from the following example.

Take for $X$ the 
affine line and for $f\,:\,X' \,\longrightarrow\, X$ a finite cover of $X$ such that
the genus of $X'$ is positive. Then a 
non-trivial line bundle over $X'$ exists, and hence a non-trivial decomposition of the trivial 
vector bundle of rank two over $X'$ as a direct sum of two line bundles, while no such 
decomposition exists over $X$. Therefore, if $G \,=\, {\rm GL}_2$, $G_1 \,=\,
\bG_m \times_k \bG_m$, and $P$ is the trivial
principal $G$\nd bundle over $X$, then $f^*P$ has a principal $G_1$\nd subbundle which 
is non-trivial and hence not isomorphic to the pullback of a principal $G_1$\nd subbundle of $P$.

\begin{cor}\label{c:pull}
Let $f\,:\,X'\,\longrightarrow\, X$ be a morphism of $k$\nd schemes such that the hypotheses of Theorem~\ref{t:pull} are satisfied.
\begin{enumerate}
\item\label{i:pullP}
For any reductive algebraic $k$\nd group $G$, two principal $G$\nd bundles $P_1$ and $P_2$ over $X$ are isomorphic
if and only if $f^*P_1$ and $f^*P_2$ are so.
\item\label{i:pullVindec}
A vector bundle $\sV$ over $X$ is indecomposable if and only if $f^*\sV$ is so.
\item\label{i:pullViso}
Two vector bundles $\sV_1$ and $\sV_2$ over $X$ are isomorphic if and only if $f^*\sV_1$ and $f^*\sV_2$ are so.
\end{enumerate}
\end{cor}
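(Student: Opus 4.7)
The plan is to deduce all three parts from Theorem~\ref{t:pull} by choosing, in each case, an appropriate reductive subgroup whose subbundles encode the property in question; in each application I only need the qualitative consequence that the set of isomorphism classes of principal $G_1$\nd subbundles of $P$ is non-empty if and only if the corresponding set for $f^*P$ is.

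For \ref{i:pullP}, I would form the principal $(G \times_k G)$\nd bundle $Q \,=\, P_1 \times_X P_2$ over $X$. Since $G$ is reductive, so is the diagonal subgroup $\Delta G \,\subset\, G \times_k G$. Principal $\Delta G$\nd subbundles of $Q$ correspond bijectively to isomorphisms $P_1 \iso P_2$ of principal $G$\nd bundles: the graph of such an isomorphism is a $\Delta G$\nd subbundle, and conversely every $\Delta G$\nd subbundle arises in this way (a routine local-to-global check, exactly as in the paragraph preceding Theorem~\ref{t:push} that identifies reductions with cross-sections). Hence $P_1 \,\cong\, P_2$ if and only if $Q$ admits a $\Delta G$\nd subbundle. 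Applying Theorem~\ref{t:pull} to $Q$ with $G \times_k G$ in place of $G$ and $\Delta G$ in place of $G_1$, and using $f^*Q \,=\, f^*P_1 \times_{X'} f^*P_2$, yields \ref{i:pullP}.

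For \ref{i:pullVindec}, let $n$ be the rank of $\sV$ and write $P_\sV \,=\, \underline{\Iso}_X(\sO_X^n,\,\sV)$ for the associated principal ${\rm GL}_n$\nd bundle; this construction commutes with pullback, so $f^*P_\sV \,=\, P_{f^*\sV}$. As recalled earlier in the paper, for each $r+s \,=\, n$ with $r,s \,\ge\, 1$, direct sum decompositions $\sV \,=\, \sV' \oplus \sV''$ of ranks $r$ and $s$ correspond to principal $({\rm GL}_r \times_k {\rm GL}_s)$\nd subbundles of $P_\sV$, and ${\rm GL}_r \times_k {\rm GL}_s$ is reductive. Applying Theorem~\ref{t:pull} with $G \,=\, {\rm GL}_n$ and $G_1 \,=\, {\rm GL}_r \times_k {\rm GL}_s$ shows that existence of such a subbundle is preserved by $f^*$, so $\sV$ is decomposable if and only if $f^*\sV$ is. Part \ref{i:pullViso} then reduces to \ref{i:pullP}: pullback preserves rank, so we may assume $\sV_1$ and $\sV_2$ have common rank $n$ (else both statements fail trivially), and then $\sV_1 \,\cong\, \sV_2$ if and only if $P_{\sV_1} \,\cong\, P_{\sV_2}$ as principal ${\rm GL}_n$\nd bundles, so \ref{i:pullP} with $G \,=\, {\rm GL}_n$ gives the result.

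Since the whole argument is a direct translation into Theorem~\ref{t:pull} once the correct reductive subgroup is chosen, I expect no substantial obstacle. The one point that must be verified explicitly is the identification in \ref{i:pullP} between $\Delta G$\nd subbundles of $P_1 \times_X P_2$ and bundle isomorphisms $P_1 \iso P_2$; everything else is formal.
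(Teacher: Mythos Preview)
Your proposal is correct and matches the paper's own proof essentially line for line: part~\ref{i:pullP} via the diagonal in $G \times_k G$, part~\ref{i:pullVindec} via ${\rm GL}_r \times_k {\rm GL}_s \subset {\rm GL}_n$, and part~\ref{i:pullViso} by reducing to~\ref{i:pullP} with $G = {\rm GL}_n$. The only addition you make is the explicit remark that one may assume equal rank in~\ref{i:pullViso} and that the graph-of-isomorphism identification in~\ref{i:pullP} should be checked; both are routine and the paper simply leaves them implicit.
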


\begin{proof}
\ref{i:pullP}\,
$P_1$ and $P_2$ are isomorphic if and only if the principal $(G \times_k G)$\nd bundle $P_1 \times_k P_2$ over $X$ has a principal
$G$\nd subbundle, where $G$ is diagonally embedded in $G \times_k G$.
It thus suffices to apply Theorem~\ref{t:pull} with $G \times_k G$ for $G$ and $G$ for $G_1$.

\ref{i:pullVindec}\,
If $\sV$ has rank $n$, apply Theorem~\ref{t:pull} with $G = {\rm GL}_n$ and $G_1 = {\rm GL}_r \times_k {\rm GL}_s$ for $r+s = n$. 

\ref{i:pullViso}\,
If $\sV_1$ and $\sV_2$ have rank $n$, apply \ref{i:pullP} with $G = {\rm GL}_n$.
\end{proof}

The following is a slightly stronger condition on a morphism $f\,:\,X'
\,\longrightarrow\, X$ of $k$\nd schemes than \ref{i:redsubprop} of Theorem~\ref{t:redsub}:

\begin{itemize}
\item[$\mathrm{\ref{i:redsubprop}}^\prime$]
$X$ is locally noetherian and normal, $X'$ is non-empty, and $f$ is proper
and satisfying the condition that its restriction to every irreducible component
of $X'$ is surjective.
\end{itemize}

Suppose that either \ref{i:redsublf} of Theorem~\ref{t:redsub} or $\mathrm{\ref{i:redsubprop}}^\prime$ holds, that $X'$ is connected,
and that $H^0(X,\,\sO_X)$ is henselian local with residue field $k$.
Then it can be seen as follows that $H^0(X',\,\sO_{X'})$ is also henselian local with residue field $k$.

Note first that since $X'$ is non-empty and connected, $H^0(X',\,\sO_{X'})$ will have the required property provided that each of its elements 
is integral over $H^0(X,\,\sO_X)$.
If \ref{i:redsublf} of Theorem~\ref{t:redsub} holds, such an element $s$ is integral by the Cayley--Hamilton theorem applied to the endomorphism of 
$f_*\sO_{X'}$ defined by its global section $s$.

Suppose now that $\mathrm{\ref{i:redsubprop}}^\prime$ holds.
Consider first the case where the nilradical $\sN$ of $\sO_{X'}$ is nilpotent, i.e.,\ where 
\begin{equation*}
\sN^r \,=\, 0
\end{equation*}
for some $r$. Again we show that in this case every element $s$ of $H^0(X',\,\sO_{X'})$ is 
integral over $H^0(X,\,\sO_X)$. We may assume that $X'$ is reduced, because if $\overline{s}$ 
is the image of $s$ in $H^0(X'{}_\mathrm{red},\,\sO_{X'{}_\mathrm{red}})$, then $s$ is 
annihilated by the $r$th power of any monic polynomial over $H^0(X,\,\sO_X)$ which annihilates 
$\overline{s}$. If $U$ is a non-empty affine open subscheme of $X$, then $f^{-1}(U)$ is a 
noetherian open subscheme of $X'$ which has $Z \cap f^{-1}(U)$ as an irreducible component for 
every irreducible component $Z$ of $X'$. Thus $X'$ has only finitely many irreducible 
components $Z$, and since $H^0(X',\,\sO_{X'})$ embeds into the product of the $H^0(Z,\,\sO_Z)$ 
over such $Z$, we may further assume that $X'$ is irreducible. Then by Lemma~\ref{l:restr}, 
\ref{i:redsublf} holds with $f$ replaced by its restriction $U'\,\longrightarrow\, U$ above 
some open subscheme $U$ of $X$ with complement everywhere of codimension at least two. Consequently, from 
Lemma~\ref{l:iso} and the case where \ref{i:redsublf} holds, it
is deduced that the restriction of $s$ to $U'$, 
and hence also $s$ itself, is annihilated by a monic polynomial with coefficients in 
$H^0(X,\,\sO_X)$.

Consider now the general case where $\mathrm{\ref{i:redsubprop}}^\prime$ holds. Applying what 
has just been seen with $X'$ replaced by its closed subscheme with structure sheaf 
$\sO_{X'}/\sN^r$ shows that $H^0(X',\,\sO_{X'}/\sN^r)$ is henselian local with residue field $k$ 
for each $r$. It is thus enough to show that the canonical homomorphism
\begin{equation*}
H^0(X',\,\sO_{X'})\,\,\longrightarrow\,\, \lim_r H^0(X',\,\sO_{X'}/\sN^r) 
\end{equation*}
is an isomorphism.
Since the restriction of $\sN$ to every affine open subset of $X'$ is nilpotent, this can be done by writing the spaces of global sections 
as limits of spaces of sections over the open subsets of an affine open cover of $X'$ and their intersections, 
and then interchanging these limits with $\lim_r$.

It follows from the above that if we replace in Theorem~\ref{t:pull} the hypothesis that 
``either \ref{i:redsublf} or \ref{i:redsubprop} of Theorem~\ref{t:redsub} holds''
by the slightly stronger one that
``either \ref{i:redsublf} of Theorem~\ref{t:redsub} or $\mathrm{\ref{i:redsubprop}}^\prime$ holds'', 
then the hypothesis ``$H^0(X,\,\sO_X)$ and $H^0(X',\,\sO_{X'})$ are henselian local $k$\nd algebras
with residue field $k$'' may be replaced by ``$H^0(X,\,\sO_X)$ is a henselian local $k$\nd algebra
with residue field $k$''.
Indeed the connectedness condition on $X'$ is satisfied because $X'$ is the pullback of the identity \'etale cover of $X$ along $f$.

Let $X$ be a $k$\nd scheme and $\sR$ a finite locally free $\sO_X$\nd algebra which is nowhere $0$.
Then $H^0(X,\,\sO_X)$ is a local $k$\nd algebra if $H^0(X,\,\sR)$ is:
if $\iota\,:\,\sO_X\,\longrightarrow\, \sR$ is the unit and $\tr\,:\,\sR \,\longrightarrow\, \sO_X$ is the trace,
then for $a\, \in\, H^0(X,\,\sO_X)$ and $a'\,\in\, H^0(X,\,\sR)$,
\begin{equation*}
\tr(\iota(a)a') \,=\, a\tr(a'),
\end{equation*}
so that $\iota$ preserves non-units.
Reducing to the case where $X$ is connected and $\sR$ is indecomposable then shows that 
$H^0(X,\,\sO_X)$ is a finite product local $k$\nd algebras if
$H^0(X,\,\sR)$ is so.
If $R_1$ is a finite free $H^0(X,\,\sO_X)$\nd algebra and $X_1$ is the fibre product of $X$ with 
$\Spec(R_1)$ over $\Spec(H^0(X,\,\sO_X))$, then
\begin{equation*}
H^0(X_1,\,\sO_{X_1}) = R_1,
\end{equation*}
the pullback $\sR_1$ of $\sR$ onto $X_1$ is finite locally free and nowhere $0$, and
\begin{equation*}
H^0(X_1,\,\sR_1) = H^0(X,\,\sR) \otimes_{H^0(X,\,\sO_X)} R_1.
\end{equation*}
Since $H^0(X_1,\,\sO_{X_1})$ is a finite product of local $k$\nd algebras if $H^0(X_1,\,\sR_1)$ is so, it follows that
$H^0(X,\,\sO_X)$ is a henselian local $k$\nd algebra if $H^0(X,\,\sR)$ is so.

If $f\,:\,X' \,\longrightarrow\, X$ is a morphism of $k$\nd schemes for which \ref{i:redsublf} of Theorem~\ref{t:redsub} holds, the above
with $\sR\,=\, f_*\sO_{X'}$ shows that if $H^0(X',\,\sO_{X'})$ is a henselian local $k$\nd algebra, then so is $H^0(X,\,\sO_X)$.

If $X$ is a $k$\nd scheme which is locally of finite type, an argument similar to the proof of 
\cite[Lemma~2.1]{BisO'S21} shows that $H^0(X,\,\sO_X)$ is henselian local with residue field 
$k$ if and only if $X$ is non-empty and the restriction to $X_\mathrm{red}$ of any regular 
function on $X$ is constant. Therefore, if $X' \,\longrightarrow\, X$ is a dominant morphism of $k$\nd schemes which 
are locally of finite type, and if $H^0(X',\,\sO_{X'})$ is henselian local with residue field 
$k$, then so is $H^0(X,\,\sO_X)$.

Consider finally the following condition stronger than \ref{i:redsubprop} of 
Theorem~\ref{t:redsub}:

\begin{itemize}
\item[$\mathrm{\ref{i:redsubprop}}^{\prime\prime}$]
$X$ is locally of finite type and normal, and $f$ is proper and surjective.
\end{itemize}

It follows from the above that if in Theorem~\ref{t:pull} the hypothesis that 
``either \ref{i:redsublf} or \ref{i:redsubprop} of Theorem~\ref{t:redsub} holds'' is replaced by
``either \ref{i:redsublf} of Theorem~\ref{t:redsub} or
$\mathrm{\ref{i:redsubprop}}^{\prime\prime}$ holds'',
then the hypothesis ``$H^0(X,\,\sO_X)$ and $H^0(X',\,\sO_{X'})$ are henselian local $k$\nd algebras
with residue field $k$'' may be replaced by ``$H^0(X',\,\sO_{X'})$ is a henselian local $k$\nd algebra
with residue field $k$''.

\section{Complex analytic spaces}

Let $X$ be a complex analytic space and $J$ a complex Lie group. By a principal $J$\nd 
bundle over $X$ is meant a complex analytic space $P$ over $X$ together with a right action of 
$J$ on $P$ above $X$, such that $P$ is locally over $X$ isomorphic to $X \times J$ with $J$ 
acting by right translation. The pullback $f^*P$ of $P$ along a morphism of complex analytic 
spaces $f\,:\,X' \,\longrightarrow\, X$, with its canonical right action of $J$ over $X$, is
a principal $J$\nd bundle over $X'$.

Given a homomorphism $h\,:\,J \,\longrightarrow\, J'$ of complex Lie groups and a principal $J$\nd bundle $P$ 
over $X$, there exists a principal $J'$\nd bundle $P'$ over $X$ and a morphism $l\,:\,P \,\longrightarrow\, P'$ 
over $X$ such that $l(pj) \,=\, l(p)h(j)$ for points $p$ of $P$ and $j$ of $J$, and the pair $(P',\,l)$ 
is unique up to a unique isomorphism. The principal $J'$\nd bundle $P'$ is the push forward of 
$P$ along $h$.

Let $J_1$ be a closed complex Lie subgroup of $J$. A \emph{principal $J_1$\nd subbundle of $P$} 
is defined as a closed analytic subspace $P_1$ of $P$ such that the action of $J$ on $P$ 
restricts to an action of $J_1$ on $P_1$ with $P_1$ a principal $J_1$\nd bundle over $X$. Two 
principal $J_1$\nd subbundles of $P$ will be called isomorphic if they are isomorphic as 
principal $J_1$\nd bundles over $X$. By uniqueness of push forward, any such isomorphism is the 
restriction of a unique automorphism of the principal $J$\nd bundle $P$ over $X$.

By a representation of a complex Lie group $J$ we mean a complex vector space $V$ together with an $\sO_J$\nd automorphism of the pullback
of $V$ onto $J$ satisfying the usual associativity property for an action.
If $G$ is an affine algebraic $\C$\nd group, any representation of $G$ may be regarded as a representation of its associated
complex Lie group $G_\mathrm{an}$ by pulling back the action of $G$ along the canonical morphism $G_\mathrm{an} \,\longrightarrow\, G$.

Let $P$ be a principal $J$\nd bundle over $X$.
We have an action of $J$ on $P$ with $jp = pj^{-1}$ for points $j$ of $J$ and $p$ of $P$. 
If $V$ is a representation of $J$, arguing locally over $X$ shows that a pair consisting of an $\sO_X$\nd module $\sV$ and
a $J$\nd equivariant isomorphism from the pullback of $V$ to $P$ to the pullback of $\sV$ to $P$ exists, and is
unique up to a unique isomorphism. We write $\sV$ as
\begin{equation*}
P \times^J V.
\end{equation*} 
It is the usual $\sO_X$\nd module associated to $V$ by identifying the points $(pj,\,v)$ and $(p,\,jv)$ of $P \times V$.
If $V$ is finite-dimensional of dimension $n$, then $P \times^J V$ is a vector bundle everywhere of rank $n$.
Formation of $P \times^J V$ is functorial in $P$, $J$ and $V$ and is compatible with pullback.
Similarly we define a complex analytic space 
\begin{equation*}
P \times^J Z
\end{equation*}
over $X$ for $Z$ a complex analytic space with an action of $J$.

A commutative $\sO_X$\nd algebra will be said to be \emph{of finite presentation} if locally on $X$
it is isomorphic to a quotient by a finite number of sections of a polynomial $\sO_X$\nd algebra
in finite number of indeterminates.
Commutative $\sO_X$\nd algebras of finite presentation are closed under the formation 
of finite colimits of commutative $\sO_X$\nd algebras. If $a\,:\,Z\,\longrightarrow\, X$
is a morphism of complex analytic spaces and the commutative $\sO_X$\nd algebra $\sR$
is of finite presentation, then the commutative $\sO_Z$\nd algebra $a^*\sR$ is of finite presentation.

Let $\sR$ be a commutative $\sO_X$\nd algebra of finite presentation. Then the contravariant functor on
complex analytic spaces over $X$ that sends $Z$ with structural morphism $a\,:\,Z \,\longrightarrow\, X$ to 
\begin{equation*}
\Hom_{\sO_Z\text{-}\mathrm{alg}}(a^*\sR,\,\,\sO_Z)
\end{equation*}
is representable, and we write the representing object as
\begin{equation*}
\Sp(\sR).
\end{equation*}
To see that $\Sp(\sR)$ exists, note that it exists if it does so locally on $X$,
and that if $\sR$ is a polynomial algebra in $n$ variables then $\Sp(\sR)$ is an affine $n$\nd space over $X$.
Thus $\Sp(\sR)$ exists for any $\sR$ of finite presentation, with $\Sp$ sending finite colimits to finite limits, 
because $\sR$ is locally on $X$ a coequaliser of two morphisms between polynomial algebras.
Formation of $\Sp(\sR)$ commutes with pullback.
If $X$ is an infinitesimal thickening of a point, and hence is a $\C$\nd scheme, then
$\Sp(\sR)$ is the complex analytic space over $X$ associated to $\Spec(\sR)$.

The assignment $\sR \, \longmapsto \, \Sp(\sR)$ extends canonically to a contravariant functor from 
commutative $\sO_X$\nd algebras of finite presentation to complex analytic spaces over $X$, which
restricting to the case where $X$ is infinitesimal shows to be faithful.

Recall that if $G$ and $G_0$ are affine algebraic $\C$\nd groups with $G_0$ reductive, then any homomorphism
$G_0{}_\mathrm{an}\,\longrightarrow\, G_\mathrm{an}$ of complex Lie groups is of the form $h_\mathrm{an}$
for a (unique) $\C$\nd homomorphism $h\,:\,G_0\,\longrightarrow\, G$. In particular the functor $G\,\longmapsto\,
 G_\mathrm{an}$ from reductive algebraic $\C$\nd groups to complex Lie groups is fully faithful. A complex
Lie group will be called \emph{algebraic} if it is isomorphic to the complex Lie group $G_\mathrm{an}$ associated to 
some affine algebraic $\C$\nd group $G$, and \emph{reductive} if further $G$ may be taken to be reductive.

Let $J$ be an algebraic complex Lie group. If we write $J \,=\,
G_\mathrm{an}$ for an affine algebraic $\C$\nd group $G$ with $G \,=\, \Spec(R)$, then
\begin{equation*}
J \,=\, \Sp(R).
\end{equation*}
Further if $R$ is equipped with the action of $G$ defined by conjugation, the corresponding action of $J$
on $R$ induces conjugation on $J$.
The group space over $X$ of automorphisms of a principal $J$\nd bundle $P$ over $X$ is then
\begin{equation*}
\underline{\Aut}_J(P)\, =\, P \times^J J_\mathrm{conj}\, =\, \Sp(\sR)
\end{equation*}
with $\sR$ the $\sO_X$\nd algebra $P \times^J R_\mathrm{conj}$, where the subscripts ${}_\mathrm{conj}$ indicate 
that the action of $J$ is defined by conjugation.
If $J_1$ is a reductive closed complex Lie subgroup of $J$, there is a (unique) reductive $\C$\nd subgroup $G_1$ of $G$
with $G_1{}_\mathrm{an} \,=\, J_1$. Then 
\begin{equation*}
J/J_1\, =\, (G/G_1)_\mathrm{an}\, =\, \Sp(R_1),
\end{equation*}
where $G/G_1 \,=\, \Spec(R_1)$, and
\begin{equation*}
P/J_1 \,=\, P \times^J J/J_1\, =\, \Sp(\sR_1),
\end{equation*}
where $\sR_1\, =\, P \times^J R_1$ with the action of $J$ on $R_1$ inducing the action by left translation of $J$ on $J/J_1$.

As in the algebraic case, principal $J_1$\nd subbundles of a principal $J$\nd bundle $P$ over $X$ correspond to cross-sections of $P/J_1$
over $X$. Proposition~\ref{p:bijan} below can now be proved in the same way as Proposition~\ref{p:bij}.

\begin{prop}\label{p:bijan}
Let $f\,:\,X' \,\longrightarrow\, X$ be a morphism of complex analytic spaces with $\sO_X \,\longrightarrow\,
f_*\sO_{X'}$ an isomorphism. Let $J$ be an algebraic complex Lie group and $J_1\,\subset\, J$ a reductive closed complex Lie subgroup.
Then for any principal $J$\nd bundle $P$ over $X$, pullback along the projection of $f^*P$ onto $P$ defines a bijection
from the set of (respectively, the set of isomorphism classes of) principal $J_1$\nd subbundles of $P$ to the set of 
(respectively, the set of isomorphism classes of) principal $J_1$\nd subbundles of $f^*P$.
\end{prop}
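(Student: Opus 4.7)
The plan is to follow the proof of Proposition~\ref{p:bij} almost verbatim, substituting ``affine scheme over $X$'' with the analytic construction $\Sp(\sR)$ for $\sR$ a commutative $\sO_X$-algebra of finite presentation. Concretely, it will suffice to prove the following core statement: for every such $Z = \Sp(\sR)$ over $X$, pullback along $f$ induces a bijection from cross-sections of $Z$ over $X$ to cross-sections of $f^*Z$ over $X'$. Applying this with $Z = P/J_1 = \Sp(\sR_1)$ yields the bijection on sets of principal $J_1$-subbundles (using the correspondence between such subbundles and cross-sections of $P/J_1$ recalled just before the proposition), and applying it with $Z = \underline{\Aut}_J(P) = \Sp(P \times^J R_\mathrm{conj})$ then upgrades this to a bijection on isomorphism classes, since two subbundles of $P$ are isomorphic precisely when a cross-section of $\underline{\Aut}_J(P)$ carries one to the other, and this action is manifestly compatible with pullback.

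For the core claim, I will chain three canonical bijections. First, by the defining universal property of $\Sp$, a cross-section of $\Sp(\sR)$ over $X$ is the same as an $\sO_X$-algebra homomorphism $\sR \to \sO_X$. Second, since formation of $\Sp$ commutes with pullback, $f^*Z = \Sp(f^*\sR)$, and a cross-section over $X'$ is the same as an $\sO_{X'}$-algebra homomorphism $f^*\sR \to \sO_{X'}$. Third, the $(f^*, f_*)$-adjunction for commutative $\sO$-algebras identifies the latter with $\sO_X$-algebra homomorphisms $\sR \to f_*\sO_{X'}$. Under these identifications the pullback map on cross-sections is postcomposition with the unit $\sO_X \to f_*\sO_{X'}$, which is an isomorphism by hypothesis; bijectivity is immediate.

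I do not anticipate any serious obstacle. The only mildly technical point is that the adjunction must be invoked for commutative $\sO$-algebras rather than merely for $\sO$-modules, but this is formal from the monoidality of $f^*$ and the fact that cross-sections are algebra, not just module, homomorphisms. All other ingredients, namely the existence and pullback-compatibility of $\Sp$ on finitely presented commutative algebras together with the $\Sp$-descriptions of $P/J_1$ and $\underline{\Aut}_J(P)$, have been established in the preceding paragraphs of the section, so the argument reduces to the short chain of adjunctions above.
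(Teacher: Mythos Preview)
Your proposal is correct and follows essentially the same argument as the paper, which simply remarks that Proposition~\ref{p:bijan} ``can now be proved in the same way as Proposition~\ref{p:bij}'': reduce to bijectivity of cross-sections of $Z=\Sp(\sR)$ for $\sR$ of finite presentation, take $Z=P/J_1$ and $Z=\underline{\Aut}_J(P)$, and identify cross-sections with $\sO_X$\nd algebra maps into $\sO_X$ and $f_*\sO_{X'}$ respectively. Your only addition is making the $(f^*,f_*)$ adjunction for commutative algebras explicit where the paper leaves it implicit.
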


By a tensor category we mean a $\C$\nd linear category with a $\C$\nd bilinear tensor product, together with a unit $\I$ and associativity and 
commutativity constraints satisfying the usual compatibilities \cite[p. 105, Definition~1.1]{DM82}.
A tensor functor between tensor categories is a $\C$\nd linear functor together with structural isomorphisms compatible with the constraints
ensuring that the unit and tensor product are preserved up to isomorphism \cite[pp. 113--114, Definition~1.8]{DM82}.
A tensor isomorphism between tensor functors is a natural isomorphism which is compatible with the structural
isomorphisms \cite[p. 116, Definition~1.12]{DM82}.

Let $X$ be a complex analytic space.
The category of $\sO_X$\nd modules, with the usual unit, tensor product and constraints, is a tensor category
\begin{equation*}
\MOD(X).
\end{equation*}
The vector bundles over $X$, identified with the locally free $\sO_X$\nd modules of finite type, form a full tensor subcategory $\Mod(X)$.

Let $G$ be an affine algebraic $\C$\nd group.
We denote by 
\begin{equation*}
\REP_{\C}(G)
\end{equation*}
the tensor category of representations of $G$, and by $\Rep_{\C}(G)$ the full tensor subcategory of finite-dimensional representations.
Let
\begin{equation*}
T\,:\,\Rep_{\C}(G) \,\longrightarrow\, \Mod(X)
\end{equation*}
be a tensor functor which is exact, in the sense that it is exact as a functor to $\MOD(X)$.
Writing representations of $G$ as the filtered colimit of their finite-dimensional subrepresentations 
shows that $T$ extends to a tensor functor
\begin{equation*}
\widehat{T}\,:\,\REP_{\C}(G) \,\longrightarrow\, \MOD(X)
\end{equation*}
from the category of all representations of $G$, which is exact and preserves colimits.
Similarly every tensor isomorphism $\theta\,:\,T' \,\iso\, T$ extends uniquely to a tensor isomorphism 
$\widehat{\theta}\,:\,\widehat{T'} \,\iso\, \widehat{T}$.
If $Z$ is a complex analytic space over $X$, we write
\begin{equation*}
T_Z\,:\,\Rep_{\C}(G) \,\longrightarrow\, \Mod(Z)
\end{equation*}
for $T$ followed by pullback along $Z \,\longrightarrow\, X$, and similarly for $\widehat{T}$.

When $X$ is a point, $\Mod(X)$ is the category $\Mod(\C)$ of finite-dimensional $\C$\nd vector spaces, and $\MOD(X)$ the category $\MOD(\C)$ of all 
$\C$\nd vector spaces.
If we take for $T$ the forgetful tensor functor
\begin{equation*}
\omega\,:\,\Rep_{\C}(G) \,\longrightarrow\, \Mod(\C),
\end{equation*}
then $\widehat{\omega}\,:\,\REP_{\C}(G) \,\longrightarrow\, \MOD(\C)$ is also the forgetful tensor functor, and
\begin{equation*}
\widehat{\omega}_Z\,\,:\,\,\REP_{\C}(G)\,\,\longrightarrow\,\, \MOD(Z)
\end{equation*}
sends a representation to the free $\sO_Z$\nd module on its underlying vector space.

The tensor functor that sends a representation of $G$ to its underlying vector space with the trivial action of $G$
will be denoted by
\begin{equation*}
E\,:\,\REP(G) \,\longrightarrow\, \REP(G).
\end{equation*} 
Then $\widehat{T}E \,=\, \widehat{\omega}_X$ for any exact tensor functor $T\,:\,\Rep_{\C}(G)\,\longrightarrow\, \Mod(X)$.

We regard the coordinate algebra $\C[G]$ as the left regular representation of $G$, 
where the point $g$ of $G$ sends $w$ in $\C[G]$ to $w(g^{-1}-)$.
Then $\C[G]$ is a commutative $G$\nd algebra under pointwise multiplication.
The action of $G$ on a representation $V$ of $G$
is an isomorphism
\begin{equation}\label{e:Gactiso}
E(V) \otimes_{\C} \C[G] \,\,\iso\,\, V \otimes_{\C} \C[G]
\end{equation}
of modules over the commutative algebra $\C[G]$ in $\REP(G)$.

If $T$ and $T'$ are tensor functors from $\Rep_{\C}(G)$ to $\Mod(X)$, we write $\underline{\Iso}_X^\otimes(T',\,T)$ 
for the functor on complex analytic spaces over $X$ with
\begin{equation*}
\underline{\Iso}_X^\otimes(T',\,T)(Z) \,=\, \Iso^{\otimes}(T'{}\!_Z,\,T_Z),
\end{equation*}
where $\Iso^{\otimes}$ denotes the set of tensor isomorphisms.
The pullback of $\underline{\Iso}_X^\otimes(T',\,T)$ along $X' \,\longrightarrow\,
X$ is $\underline{\Iso}_{X'}^\otimes(T'{}\!_{X'},\,T_{X'})$. When $X$ is a
point, we also write $\underline{\Iso}^\otimes(T',\,T)$ for $\underline{\Iso}_X^\otimes(T',\,T)$.

The action of $G_{\mathrm{an}}$ for any $V$ in $\Rep_{\C}(G)$ is an automorphism of 
the $\sO_{G_{\mathrm{an}}}$\nd module $\omega_{G_{\mathrm{an}}}(V)$.
Such automorphisms are the components of an action tensor automorphism of $\omega_{G_{\mathrm{an}}}$.
The natural transformation $\alpha$ from the functor represented by 
$G_{\mathrm{an}}$ to $\underline{\Iso}^\otimes(\omega,\,\omega)$ defined by the action tensor automorphism 
in $\underline{\Iso}^\otimes(\omega,\,\omega)(G_{\mathrm{an}})$ has component
\begin{equation*}
\alpha_Z:G_{\mathrm{an}}(Z)\,\longrightarrow\, \Iso^{\otimes}(\omega_Z,\omega_Z)
\end{equation*} 
at $Z$ the map with $\alpha_Z(g)_V$ the action of $g$ on $V$.
Further $\widehat{\alpha_Z(g)}_V$ is the action of $g$ on $V$ for any $V$ in $\REP_{\C}(G)$.
By associativity of the action of $G_{\mathrm{an}}$, each $\alpha_Z$ is a group homomorphism.

\begin{lem}\label{l:Grep}
For any affine algebraic $\C$\nd group $G$, the functor $\underline{\Iso}^\otimes(\omega,\,\omega)$ is represented by 
$G_{\mathrm{an}}$ with universal element in $\underline{\Iso}^\otimes(\omega,\,\omega)(G_{\mathrm{an}})$ the action tensor automorphism.
\end{lem}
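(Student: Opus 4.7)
The plan is to prove $\alpha_Z\colon G_\mathrm{an}(Z) \to \Iso^\otimes(\omega_Z, \omega_Z)$ is bijective for every complex analytic space $Z$; since $\alpha$ is a natural transformation, Yoneda then identifies $\underline{\Iso}^\otimes(\omega, \omega)$ with the functor represented by $G_\mathrm{an}$, and its universal element $\alpha_{G_\mathrm{an}}(\mathrm{id}_{G_\mathrm{an}})$ unfolds directly to the action tensor automorphism.

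The key is to exploit the left regular representation $\C[G]$, which is a commutative algebra in $\REP_\C(G)$ (multiplication being $G$\nd equivariant for left translation) and on which every right translation $R_{g'}\colon \C[G] \to \C[G]$ is $G$\nd equivariant, hence a morphism in $\REP_\C(G)$. Given $\theta \in \Iso^\otimes(\omega_Z, \omega_Z)$, pass to the unique extension $\widehat{\theta}$; since tensor automorphisms preserve algebras, $\widehat{\theta}_{\C[G]}$ is an $\sO_Z$\nd algebra automorphism of $\widehat{\omega}_Z(\C[G]) = \C[G]\otimes_\C\sO_Z$. Because $\C[G]$ is finitely presented over $\C$, we have $\Sp(\C[G] \otimes_\C \sO_Z) = G_\mathrm{an} \times Z$. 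Naturality of $\widehat{\theta}$ forces $\widehat{\theta}_{\C[G]}$ to commute with every right translation $R_{g'}$. The standard Tannakian argument---evaluate the algebra automorphism at the unit of $G$ to produce a $\C$\nd algebra map $\C[G] \to H^0(Z, \sO_Z)$, i.e.,\ an element of $G_\mathrm{an}(Z)$, and use compatibility with all right translations to reconstruct the entire automorphism---then shows $\widehat{\theta}_{\C[G]}$ is left translation by a unique $g_\theta \in G_\mathrm{an}(Z)$. Set $\beta_Z(\theta) := g_\theta$.

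That $\beta_Z$ inverts $\alpha_Z$ is then straightforward. For $g \in G_\mathrm{an}(Z)$, $\alpha_Z(g)_{\C[G]}$ is by definition left translation by $g$, so $\beta_Z(\alpha_Z(g)) = g$. Conversely, given $\theta$ and $g_\theta = \beta_Z(\theta)$, to show $\widehat{\theta}_V = \widehat{\alpha_Z(g_\theta)}_V$ for every $V \in \REP_\C(G)$, apply both sides to the coaction $\Delta_V\colon V \to V \otimes_\C \C[G]$, which is a morphism in $\REP_\C(G)$ with target carrying the $E(V) \otimes \C[G]$ structure of \eqref{e:Gactiso}. By naturality and tensor compatibility, equality on $V$ reduces via $\Delta_V$ to equality on $E(V) \otimes \C[G]$, and then, using $\widehat{\theta}_{E(V)} = \widehat{\alpha_Z(g_\theta)}_{E(V)} = \mathrm{id}$ (because $E(V)$ is a direct sum of copies of $\I$ and every tensor automorphism fixes $\I$), to equality on $\C[G]$, which holds by construction of $g_\theta$.

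The main obstacle is the Tannakian identification of an $\sO_Z$\nd algebra automorphism of $\C[G] \otimes_\C \sO_Z$ that commutes with all right translations $R_{g'}$ as left translation by a unique $Z$\nd point of $G_\mathrm{an}$. This is the analytic analogue of the classical Tannakian reconstruction, which carries over to the analytic setting with no real change once one observes that $\Sp$ identifies $G_\mathrm{an}(Z)$ with $\Hom_{\C\text{-alg}}(\C[G], H^0(Z, \sO_Z))$ and that the relevant diagrams for $\widehat{\theta}$ are encoded in the $\sO_Z$\nd algebra structure on $\C[G]\otimes_\C\sO_Z$.
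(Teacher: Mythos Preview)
Your proposal is correct and follows essentially the same strategy as the paper: identify $\widehat{\theta}_{\C[G]}$ as left translation by some $g \in G_{\mathrm{an}}(Z)$, then use the $G$\nd equivariant embedding $V \hookrightarrow E(V) \otimes_{\C} \C[G]$ (your coaction $\Delta_V$) to deduce $\theta_V = \alpha_Z(g)_V$ for every $V$. The one cosmetic difference is in the first step. You argue that $\widehat{\theta}_{\C[G]}$ commutes with each individual right translation $R_{g'}$ for $g' \in G(\C)$, because these are morphisms in $\REP_{\C}(G)$, and then invoke a density-type argument (the evaluations at $\C$\nd points jointly embed $\C[G] \otimes_{\C} \sO_Z$ into a product of copies of $\sO_Z$) to conclude it is a left translation. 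The paper instead applies naturality to the single comultiplication morphism $\C[G] \to \C[G] \otimes_{\C} E(\C[G])$ in $\REP_{\C}(G)$, which packages all right translations at once and directly shows $\Sp(\widehat{\theta}_{\C[G]})$ commutes with the right $G_{\mathrm{an}}{}_Z$\nd action; this avoids any appeal to density of $\C$\nd points and is marginally cleaner, but the content is the same.
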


\begin{proof}
Let $Z$ be a complex analytic space.
Then
\begin{equation*}
\Sp(\widehat{\omega}_Z(\C[G])) \,=\, \Sp(\widehat{\omega}_Z(E(\C[G]))) \,=\, G_{\mathrm{an}}{}_Z,
\end{equation*}
and with $\alpha$ as above the automorphism $\Sp(\widehat{\alpha_Z(g)}_{\C[G]})$ of $G_{\mathrm{an}}{}_Z$ is left translation by $g^{-1}$.

Let $\theta$ be a tensor automorphism of $\omega_Z$.
It is to be shown that $\theta = \alpha_Z(g)$ for a unique $g \in G_{\mathrm{an}}(Z)$.
Since $\widehat{\theta}_{E(\C[G])}$ is the identity,
applying $\Sp \widehat{\theta}$ to the morphism
\begin{equation*}
\C[G] \,\longrightarrow\, \C[G] \otimes_{\C} E(\C[G]) 
\end{equation*}
of commutative algebras in $\REP(G)$ induced by the composition $G \times_{\C} G\,\longrightarrow\,G$ 
shows that the automorphism $\Sp(\widehat{\theta}_{\C[G]})$
of the complex analytic space $G_{\mathrm{an}}{}_Z$ over $Z$ is compatible with right translation.
It is therefore the left translation by $g^{-1}$ for some cross-section $g$ of $G_{\mathrm{an}}{}_Z$, so that
\begin{equation*}
\widehat{\theta}_{\C[G]} \,=\, \widehat{\alpha_Z(g)}_{\C[G]}.
\end{equation*}
For any $V$ in $\Rep_{\C}(G)$, composing the inverse of \eqref{e:Gactiso} with the embedding of $V$ into $V \otimes_{\C} \C[G]$
defined by $1 \in \C[G]$ gives a monomorphism 
\begin{equation*}
V \,\longrightarrow\, E(V) \otimes_{\C} \C[G]
\end{equation*}
in $\REP_{\C}(G)$ from $V$ to a direct sum of copies of $\C[G]$.
Consequently, $\theta_V \,=\, \alpha_Z(g)_V$ for every $V$, so that $\theta \,=\, \alpha_Z(g)$.
Setting $V$ to be faithful then shows that $g$ is unique.
\end{proof}

Let $\sR$ be a commutative $\sO_X$\nd algebra of finite presentation.
If $b\,:\,\Sp(\sR) \,\longrightarrow\, X$ is the structural morphism, the universal morphism makes
$\sO_{\Sp(\sR)}$ a $b^*\sR$\nd algebra.
We then have a tensor functor $\sM \,\longmapsto \,\widetilde{\sM}$ with
\begin{equation*}
\widetilde{\sM} \,=\, b^*\sM \otimes_{b^*\sR} \sO_{\Sp(\sR)}
\end{equation*}
from $\sR$\nd modules to $\sO_{\Sp(\sR)}$\nd modules. Further
\begin{equation*}
(\sF \otimes_{\sO_X} \sR)^{\sim} \,=\, b^*\sF
\end{equation*}
for an $\sO_X$\nd module $\sF$.

Suppose that $\sR$ is flat as an $\sO_X$\nd module and that the spectra of its fibres are smooth $\C$\nd schemes.
Then it can be seen as follows that $\mathrm{Sp}(\sR)$ is smooth over $X$.
By restricting above infinitesimal neighbourhoods of points in $X$, we reduce to the case where
$X$ is an infinitesimal thickening of a point.
The scheme $\Spec(\sR)$ over $X$ is then smooth because it is flat with smooth fibres,
so that $\mathrm{Sp}(\sR)$ is smooth.

By Lemma~\ref{l:Grep}, $G_{\mathrm{an}}$ may be identified with $\underline{\Iso}^\otimes(\omega,\,\omega)$ and hence 
$G_{\mathrm{an}}{}_X$ with $\underline{\Iso}_X^\otimes(\omega_X,\,\omega_X)$.
For any exact tensor functor $T$ from $\Rep_{\C}(G)$ to $\Mod(X)$, composition of tensor isomorphisms 
thus defines a canonical right action of $G_{\mathrm{an}}$ on $\underline{\Iso}_X^\otimes(\omega_X,\,T)$. 

\begin{lem}\label{l:principal}
Let $X$ be a complex analytic space, $G$ an affine algebraic $\C$\nd group and $T$ an exact tensor functor from $\Rep_{\C}(G)$
to $\Mod(X)$.
Then the functor $\underline{\Iso}_X^\otimes(\omega_X,\,T)$ is representable, and is a principal $G_{\mathrm{an}}$\nd bundle over $X$
for the canonical right action of $G_{\mathrm{an}}$.
\end{lem}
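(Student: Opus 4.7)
My plan is to define $P := \Sp(\sR)$ where $\sR := \widehat{T}(\C[G])$, viewing $\C[G]$ as a commutative monoid in $\REP_\C(G)$ under pointwise multiplication; the tensor functor $\widehat{T}$ then turns $\sR$ into a commutative $\sO_X$-algebra. To verify that $\sR$ is of finite presentation, I would pick a finite-dimensional subrepresentation $W \subset \C[G]$ that generates $\C[G]$ as an algebra, write $\C[G] = \mathrm{Sym}(W)/I$ with $I$ finitely generated by Noetherianity of $\mathrm{Sym}(W)$, and apply $\widehat T$: using exactness, $\sR$ is locally the quotient of the polynomial algebra $\mathrm{Sym}(T(W))$ by finitely many sections. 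The right regular action of $G$ on $\C[G]$ commutes with the left regular action that defines its $G$-module structure, so it gives $\C[G]$ automorphisms in $\REP_\C(G)$; applied via $\widehat T$, this yields the required right $G_{\mathrm{an}}$-action on $\sR$ by $\sO_X$-algebra automorphisms, hence on $P$.

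The heart of the proof is the representability bijection. For any $a : Z \to X$, I would show that tensor isomorphisms $\theta : \omega_Z \iso T_Z$ correspond naturally to algebra morphisms $s : a^*\sR \to \sO_Z$, i.e.\ to $Z$-points of $P$ over $a$. Starting from $\theta$, the component $\widehat\theta_{\C[G]}$ is a $G_{\mathrm{an}}$-equivariant algebra isomorphism $\sO_Z \otimes_\C \C[G] = \omega_Z(\C[G]) \iso a^*\sR$; applying $\Sp$ yields a $G_{\mathrm{an}}$-equivariant isomorphism $P \times_X Z \iso G_{\mathrm{an}} \times Z$ of $Z$-spaces, whose inverse evaluated at the identity section delivers the desired $s$. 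Conversely, $s$ determines such an equivariant isomorphism of algebras, and its inverse extends uniquely to a tensor isomorphism $\theta$ because every $V \in \Rep_\C(G)$ embeds $G$-equivariantly into $E(V) \otimes_\C \C[G]$ via the action isomorphism \eqref{e:Gactiso}, so $\theta_V$ is forced by $\widehat\theta_{\C[G]}$ exactly as in the proof of Lemma~\ref{l:Grep}. Compatibility with the $G_{\mathrm{an}}$-actions on both sides then follows from Lemma~\ref{l:Grep}, since the action on $P$ translates to precomposition of $\theta$ with the action of $G_{\mathrm{an}}$ on $\omega_Z$.

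To finish, I would establish local triviality. Writing $\C[G]$ as the filtered colimit of its finite-dimensional subrepresentations $V_\alpha$ and using that each $T(V_\alpha)$ is a vector bundle, $\sR$ is a filtered colimit of flat $\sO_X$-modules, hence flat. For each $x \in X$, the fiber $\sR_x/\mathfrak{m}_x\sR_x$ is non-zero (it receives $T(\I)_x/\mathfrak{m}_x = \C$), so $P_x \neq \emptyset$, and via the representability bijection $P_x = \Iso^\otimes(\omega, T_x)$ is a principal homogeneous space under $G_{\mathrm{an}}$ by Lemma~\ref{l:Grep}. Thus each fiber of $\Spec(\sR)$ above $X$ is isomorphic to $G$ and hence smooth over $\C$; the flatness-with-smooth-fibers criterion recorded earlier in the section then shows $\Sp(\sR) \to X$ is smooth, whence any $\C$-point of $P_x$ lifts to a local holomorphic section of $P$ over a neighborhood $U$ of $x$, and right translation by $G_{\mathrm{an}}$ trivializes $P|_U$. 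The main obstacle I expect is the Tannakian reconstruction at the core of the representability bijection, which requires careful bookkeeping with the extension $\widehat T$ to infinite-dimensional representations of $G$.
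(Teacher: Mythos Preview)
Your approach is correct and uses the same construction as the paper: $P = \Sp(\widehat{T}(\C[G]))$, the action isomorphism \eqref{e:Gactiso}, and smoothness via flatness with smooth fibres. The organizational difference is that the paper does not establish the bijection $\theta \leftrightarrow s$ directly for arbitrary $Z$; instead it applies $\widehat{T}$ and then $(-)^\sim$ to \eqref{e:Gactiso} to produce a single candidate universal element $u\,:\,\omega_P \iso T_P$, observes that the formation of $(P,u)$ is compatible with pullback and with tensor isomorphisms $T \iso T'$, and then reduces both representability and the principal-bundle claim to the case where $X$ is a point and $T = \omega$, which is Lemma~\ref{l:Grep}. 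The reduction is effected by the same local-section argument you give: smoothness and surjectivity of $P \to X$ furnish local sections, and any section over $U$ yields a tensor isomorphism $T_U \iso \omega_U$. This bypasses your backward step ``$s$ determines such an equivariant isomorphism of algebras'', which as written needs one more line (apply $\widehat{T}_Z$ to \eqref{e:Gactiso} with $V=\C[G]$ and base-change along $s$, rather than invoking a principal-bundle structure not yet established). One small point: the fibre being nonzero because it ``receives $\C$'' is not an argument---every algebra receives its unit; the paper instead notes that the fibre functor $\widehat{T}_x$ is exact (it is $T$ followed by the exact fibre-at-$x$ on vector bundles, extended to $\REP_\C(G)$), so the monomorphism $\C \hookrightarrow \C[G]$ remains injective after applying $\widehat{T}_x$.
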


\begin{proof}
The commutative algebra $\C[G]$ in $\REP_{\C}(G)$ can be written as the coequaliser of two morphisms
between symmetric algebras on finite-dimensional representations of $G$. 
Thus the $\sO_X$\nd algebra $\widehat{T}(\C[G])$ is of finite presentation.
Consequently, we have a complex analytic space
\begin{equation*}
P \,=\, \mathrm{Sp}(\widehat{T}(\C[G]))
\end{equation*}
over $X$.
Applying $\widehat{T}$ and then $(-)^\sim$ to \eqref{e:Gactiso} gives an isomorphism
\begin{equation*}
\omega_P(V)\, =\, (\omega_X(V) \otimes_{\sO_X} \widehat{T}(\C[G]))^\sim 
\,\iso\, (T(V) \otimes_{\sO_X} \widehat{T}(\C[G]))^\sim \,=\, T_P(V)
\end{equation*}
for $V$ in $\Rep_{\C}(G)$. 
It is the component at $V$ of a tensor isomorphism
\begin{equation*}
u\,:\,\omega_P \,\iso\, T_P.
\end{equation*}
Formation of $(P,\,u)$ is compatible with pullback along morphisms $Z \,\longrightarrow\, X$, and with tensor isomorphisms $T
\,\iso\, T'$.
We show that $P$ represents the functor $\underline{\Iso}_X^\otimes(\omega_X,\,T)$ with universal element $u$, 
and that $P$ is a principal $G_{\mathrm{an}}$\nd bundle over $X$
for the right action of $G_{\mathrm{an}}$ defined by the canonical one on $\underline{\Iso}_X^\otimes(\omega_X,\,T)$.

Suppose first that $X$ is a point and $T \,=\, \omega$. Then $P \,=\, G_\mathrm{an}$, and $u$ is the action tensor automorphism.
The required results then follow from Lemma~\ref{l:Grep}, because the canonical right action of $G_\mathrm{an}$ on 
$\underline{\Iso}^\otimes(\omega,\,\omega)$ is by right translation.

To prove the required results for arbitrary $X$ and $T$, it is enough by compatibility of $(P,\,u)$ with pullback and tensor isomorphisms 
to show that each point of $X$ is contained in an open subset $U$ such that $T_U$ is tensor isomorphic to $\omega_U$. 

The fibre of $\widehat{T}(\C[G])$ at any $x$ in $X$ is non-zero because the fibre 
$\widehat{T}_x$ of $\widehat{T}$ at $x$ is exact and $\C \,\longrightarrow\, \C[G]$ is a monomorphism.
Consequently, $P_x$ is non-empty. Any point of $P_x$ defines a tensor isomorphism from $\omega$ to $T_x$, and 
hence from $\widehat{\omega}$ to $\widehat{T}_x$. Thus $\widehat{\omega}(\C[G])$ and 
$\widehat{T}_x(\C[G])$ are isomorphic $\C$\nd algebras, so that the spectrum of 
$\widehat{T}_x(\C[G])$ is smooth. Also $\widehat{T}(\C[G])$ is flat over $\sO_X$, because it is 
the filtered colimit of the vector bundles $\widehat{T}(V)$ over $X$ as $V$ runs over the 
finite dimensional subrepresentations of $\C[G]$. Therefore, $P \,\longrightarrow\, X$ is smooth and surjective. It 
follows that any point of $X$ is contained in an open subspace of $X$ over which $P$ has a 
section. Any such section defines the required tensor isomorphism.
\end{proof}

Let $P$ be a principal $G_\mathrm{an}$\nd bundle over $X$.
Then we have an exact tensor functor
\begin{equation*}
P \times^{G_\mathrm{an}} -\,:\,\Rep_{\C}(G) \, \longrightarrow \, \Mod(X).
\end{equation*}
If $h\,:\,G \, \longrightarrow \, G'$ is a $\C$\nd homomorphism and $P'$ is the push forward of $P$ along 
$h_\mathrm{an}\,:\,G_\mathrm{an} \, \longrightarrow \, G'{}\!_\mathrm{an}$, then the composite 
\begin{equation}\label{e:hpush}
\Rep_{\C}(G') \,\xrightarrow{\,\,\,h^*\,\,} \, \Rep_{\C}(G) \, \xrightarrow{\,\,\,P \times^{G_\mathrm{an}} -\,\,} \, \Mod(X)
\end{equation} 
with the pullback $h^*$ along $h$ is tensor isomorphic to $P' \times^{G'{}\!_\mathrm{an}} -$. 

Let $P$ be a principal $G_\mathrm{an}$\nd bundle over $X$ and $T$ an exact tensor functor from $\Rep_{\C}(G)$ to $\Mod(X)$.
Then the action of $G_\mathrm{an}$ on $P$ where $g$ acts as $p \,\longmapsto\, pg^{-1}$ defines an action through $P$ on $T_P$, i.e.,\
a lifting of $T_P$ to $G_\mathrm{an}$\nd equivariant vector bundles over $P$.
We also have an action of $G_\mathrm{an}$ on $\omega_P$ given by the action on $P$ and the canonical action on $\omega$.
Let 
\begin{equation*}
\theta\,:\,\omega_P \,\iso\, T_P
\end{equation*}
be a tensor isomorphism which is compatible with the actions of $G_\mathrm{an}$, i.e.,\
for which each component $\theta_V$ is $G_\mathrm{an}$\nd equivariant.
Then $\theta$ defines a tensor isomorphism 
\begin{equation}\label{e:PGT}
P \times^{G_\mathrm{an}} - \,\iso\, T
\end{equation}
with component at $V$ the canonical isomorphism $P \times^{G_\mathrm{an}} V\,\iso\, T(V)$ defined by the $G_\mathrm{an}$\nd equivariant 
isomorphism $\theta_V$ from $\omega_P(V)\,=\, P \times V$ to the pullback $T_P(V)$ of $T(V)$ to $P$.

\begin{lem}\label{l:Pequ}
Let $X$ be a complex analytic space and $G$ an affine algebraic $\C$\nd group.
Then the functor that sends $P$ to $P \times^{G_\mathrm{an}} -$ from the category of principal $G_\mathrm{an}$\nd bundles over $X$ 
to the category of exact tensor functors $\Rep_{\C}(G) \,\longrightarrow\, \Mod(X)$ is an equivalence.
\end{lem}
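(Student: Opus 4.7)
The plan is to exhibit $T \,\longmapsto\, P(T) := \underline{\Iso}_X^\otimes(\omega_X,\,T)$ as a quasi-inverse, deducing essential surjectivity and full faithfulness simultaneously from natural isomorphisms $P \,\iso\, P(P \times^{G_\mathrm{an}} -)$ and $P(T) \times^{G_\mathrm{an}} - \,\iso\, T$. By Lemma~\ref{l:principal}, $P(T)$ lands in principal $G_\mathrm{an}$\nd bundles over $X$, and $P(-)$ is functorial in tensor isomorphisms by post-composition.

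For the first natural isomorphism, a point $p$ of $P$ above $Z \,\longrightarrow\, X$ determines, for each $V$ in $\Rep_{\C}(G)$, an $\sO_Z$\nd linear morphism $\omega_Z(V) \,\longrightarrow\, (P \times^{G_\mathrm{an}} V)|_Z$ sending $v \otimes 1$ to $[p,\,v]$. Working locally in a trivialisation of $P$, I would check that this map is an isomorphism and is compatible with unit and tensor product, hence defines a tensor isomorphism $\omega_Z \,\iso\, (P \times^{G_\mathrm{an}} -)|_Z$. This produces a morphism $P \,\longrightarrow\, P(P \times^{G_\mathrm{an}} -)$ over $X$ that is $G_\mathrm{an}$\nd equivariant (the relation $[pg,\,v] \,=\, [p,\,gv]$ on the associated bundle matches the right action on $P(-)$ by pre-composition with the action tensor automorphism of $\omega$ from Lemma~\ref{l:Grep}). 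Since any $G_\mathrm{an}$\nd equivariant morphism of principal $G_\mathrm{an}$\nd bundles is an isomorphism, this gives the desired natural isomorphism.

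For the second direction, I would use the universal tensor isomorphism $u\,:\,\omega_{P(T)} \,\iso\, T_{P(T)}$ constructed in the proof of Lemma~\ref{l:principal}. The canonical right action of $G_\mathrm{an}$ on $P(T)$ is defined by pre-composition with action tensor automorphisms of $\omega$, so after pullback to $P(T)$ it matches the action on $\omega_{P(T)}$ coming from the action on $P(T)$ combined with the canonical action on $\omega$, while $u$ intertwines this with the action on $T_{P(T)}$ induced by that on $P(T)$. The construction in \eqref{e:PGT} then produces a tensor isomorphism $P(T) \times^{G_\mathrm{an}} - \,\iso\, T$ over $X$, natural in $T$.

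The main technical step is unpacking the compatibility of actions that makes $u$ equivariant, since tracing through the definitions of the right action on $P(T)$ by pre-composition and of the canonical action on $\omega$ requires the Yoneda-type identification of Lemma~\ref{l:Grep}. Once the equivariance is in hand, functoriality of the two constructions in morphisms on each side turns the pair of natural isomorphisms into the desired equivalence.
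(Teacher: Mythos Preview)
Your proposal is correct and follows essentially the same route as the paper: both exhibit $\underline{\Iso}_X^\otimes(\omega_X,\,-)$ as a quasi-inverse by constructing the two natural isomorphisms $P \,\iso\, \underline{\Iso}_X^\otimes(\omega_X,\,P \times^{G_\mathrm{an}} -)$ and $\underline{\Iso}_X^\otimes(\omega_X,\,T) \times^{G_\mathrm{an}} - \,\iso\, T$, the first via the canonical trivialisation over $P$ and the second via the universal element $u$ together with the equivariance check and \eqref{e:PGT}. The paper's proof makes the equivariance of $u$ explicit by writing the right action as the unique morphism $b\,:\,P \times G_\mathrm{an} \,\longrightarrow\, P$ for which the pullback of $u$ along $b$ agrees with the pullback of $u$ composed with the action tensor automorphism, which is exactly the unpacking you gesture at.
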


\begin{proof}
By Lemma~\ref{l:principal} we have a functor $\underline{\Iso}_X^\otimes(\omega_X,\,-)$ from exact tensor functors
$\Rep_{\C}(G) \,\longrightarrow\, \Mod(X)$ to principal $G_\mathrm{an}$\nd bundles over $X$.

If $P$ is a principal $G_\mathrm{an}$\nd bundle over $X$, the canonical isomorphisms from $\omega_P(V)$ to the pullback of 
$P \times^{G_\mathrm{an}} V$ onto $P$ define a morphism
\begin{equation*}
P \,\longrightarrow\, \underline{\Iso}_X^\otimes(\omega_X,\,\,P \times^{G_\mathrm{an}} -)
\end{equation*}
of complex analytic spaces over $X$ which is natural in $P$.
It is compatible with the actions of $G_\mathrm{an}$, and hence is an isomorphism of principal $G_\mathrm{an}$\nd bundles.

Let $T\,:\,\Rep_{\C}(G) \,\longrightarrow\, \Mod(X)$ be an exact tensor functor.
If $P$ is the principal $G_\mathrm{an}$\nd bundle $\underline{\Iso}_X^\otimes(\omega_X,\,T)$ over $X$
and $\theta:\omega_P \iso T_P$ is the universal tensor isomorphism, 
then the right action of $G_\mathrm{an}$ on $P$ is by definition the unique morphism 
$b\,:\,P \times G_\mathrm{an} \,\longrightarrow\, P$ over $X$ such that the pullback of
$\theta$ along $b$ is the composite of the pullback of $\theta$ along the projection $P \times G_\mathrm{an}
\,\longrightarrow\, P$ with the pullback of the action automorphism of $\omega_{G_\mathrm{an}}$ along the projection 
$P \times G_\mathrm{an} \,\longrightarrow\, G_\mathrm{an}$. It follows that $\theta$ is compatible with the actions of $G_\mathrm{an}$
on $\omega_P$ and $T_P$. Hence $\theta$ defines as in \eqref{e:PGT} a tensor isomorphism
\begin{equation*}
\underline{\Iso}_X^\otimes(\omega_X,\,T) \times^{G_\mathrm{an}} - \,\,\iso\,\, T.
\end{equation*}
It is natural in $T$, because $\theta$ is functorial in $T$.

The functor that sends $P$ to $P \times^{G_\mathrm{an}} -$ has thus a quasi-inverse $\underline{\Iso}_X^\otimes(\omega_X,\,-)$.
\end{proof}

Let $G$ and $G'$ be affine algebraic $\C$\nd groups.
The $\C$\nd homomorphisms from $G'$ to $G$ form a category where a morphism from $h_1$ to $h_2$ is an element $g$ of $G(\C)$ for which
$h_2$ is the conjugate of $h_1$ by $g$.
We have a functor from this category to the category of faithful exact tensor functors
\begin{equation*}
\Rep_{\C}(G) \,\longrightarrow\, \Rep_{\C}(G')
\end{equation*}
which sends $h:G' \,\longrightarrow\, G$ to the pullback tensor functor $h^*$ along $h$ and $g:h_1 \,\longrightarrow\, h_2$ to the
tensor isomorphism with component at $V$ in $\Rep_{\C}(G)$ the action of $g$ on $V$.

\begin{lem}\label{l:Gequ}
Let $G$ and $G'$ be affine algebraic $\C$\nd groups.
Then the functor that sends $h$ to $h^*$ from the category $\C$\nd homomorphisms from $G'$ to $G$ to the
category of faithful exact tensor functors from $\Rep_{\C}(G)$ to $\Rep_{\C}(G')$ is an equivalence.
\end{lem}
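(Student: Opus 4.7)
The plan is to establish both full faithfulness and essential surjectivity of the functor, using Lemmas \ref{l:Grep} and \ref{l:principal} applied with $X$ a one-point complex analytic space. Throughout, write $\omega\,:\,\Rep_\C(G) \,\longrightarrow\, \Mod(\C)$ and $\omega'\,:\,\Rep_\C(G') \,\longrightarrow\, \Mod(\C)$ for the forgetful tensor functors.

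For full faithfulness, let $h_1,\,h_2\,:\,G' \,\longrightarrow\, G$ be $\C$\nd homomorphisms. Postcomposing any tensor isomorphism $\eta\,:\,h_1^{*} \,\iso\, h_2^{*}$ with $\omega'$ turns $\eta$ into a tensor automorphism of $\omega' \circ h_i^{*} \,=\, \omega$. By Lemma \ref{l:Grep} evaluated at a point, this is the action on $\omega$ of a unique $g \,\in\, G(\C)$. The $G'$\nd equivariance of each component $\eta_V$ then translates into the identity $h_2(x) \,=\, gh_1(x)g^{-1}$ for every point $x$ of $G'$, and conversely every such $g$ produces a tensor isomorphism $h_1^{*} \,\iso\, h_2^{*}$ via the action of $g$ on each $V$. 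This gives the required bijection on hom-sets.

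For essential surjectivity, let $T\,:\,\Rep_\C(G) \,\longrightarrow\, \Rep_\C(G')$ be a faithful exact tensor functor. The composite $\omega' \circ T$ is a faithful exact tensor functor $\Rep_\C(G) \,\longrightarrow\, \Mod(\C)$, so by Lemma \ref{l:principal} applied at a point, $\underline{\Iso}^\otimes(\omega, \omega' \circ T)$ is a principal $G_\mathrm{an}$\nd bundle over a point, which is necessarily trivial. Choose a tensor isomorphism $\theta\,:\,\omega \,\iso\, \omega' \circ T$. The natural $G'$\nd action on the objects $T(V) \,\in\, \Rep_\C(G')$ defines a tensor automorphism of $\omega' \circ T$ depending naturally on $g' \,\in\, G'$; conjugating by $\theta$ yields for each $g'$ a tensor automorphism of $\omega$, which by Lemma \ref{l:Grep} is classified by a morphism $h\,:\,G' \,\longrightarrow\, G_\mathrm{an}$. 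By construction each $\theta_V\,:\,V \,\longrightarrow\, \omega'T(V)$ is $G'$\nd equivariant from $h^{*}(V)$ to $T(V)$, so $\theta$ refines to a tensor isomorphism $h^{*} \,\iso\, T$.

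The main obstacle is to confirm that $h$ is algebraic, rather than merely a morphism of complex Lie groups. To see this, fix a faithful finite-dimensional representation $V_0$ of $G$, giving a closed embedding $G \,\hookrightarrow\, {\rm GL}(V_0)$. The composite $G' \,\xrightarrow{h}\, G \,\hookrightarrow\, {\rm GL}(V_0)$ coincides, after transport along the linear isomorphism $\theta_{V_0}$, with the action of $G'$ on the algebraic representation $T(V_0) \,\in\, \Rep_\C(G')$, and is therefore algebraic. Since $G \,\hookrightarrow\, {\rm GL}(V_0)$ is a closed embedding, this forces $h$ to come from a $\C$\nd homomorphism $G' \,\longrightarrow\, G$, completing the argument.
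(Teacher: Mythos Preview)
Your proof is correct. For full faithfulness you argue essentially as the paper does: compose with $\omega'$ to land in $\Iso^\otimes(\omega,\omega)$, invoke Lemma~\ref{l:Grep} at a point to extract $g\in G(\C)$, and then read off the conjugacy relation from $G'$\nd equivariance (the paper compresses this to ``evaluating at a faithful representation'').

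For essential surjectivity your route differs from the paper's. The paper first rectifies $T$ to a tensor functor $H'$ with $\omega'H'=\omega$ strictly (using the case $G'=1$ and transport of structure), and then invokes \cite[Corollary~2.9]{DM82} as a black box to produce the algebraic homomorphism. You instead reconstruct $h$ directly: pull the $G'$\nd action on each $T(V)$ back along $\theta$ to obtain a natural family of tensor automorphisms of $\omega$, which Lemma~\ref{l:Grep} converts into a complex Lie group homomorphism $G'{}_{\!\mathrm{an}}\to G_\mathrm{an}$; you then check algebraicity by embedding $G$ in $\mathrm{GL}(V_0)$ via a faithful representation and observing that the composite is, up to conjugation by $\theta_{V_0}$, the algebraic representation $T(V_0)$. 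Your argument is more self-contained (no appeal to \cite{DM82}), at the cost of the extra algebraicity step, which the paper's citation absorbs. One small point of notation: your morphism $h$ is a priori $G'{}_{\!\mathrm{an}}\to G_\mathrm{an}$, not $G'\to G_\mathrm{an}$; the factoring through the closed subscheme $G\hookrightarrow\mathrm{GL}(V_0)$ uses that $G'$ is reduced, which holds since we are in characteristic~$0$.
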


\begin{proof}
Suppose first that $G' \,=\, 1$.
Then the full faithfulness follows from Lemma~\ref{l:Grep} and the essential surjectivity from the fact that by Lemma~\ref{l:principal}
with $X$ a point, every faithful exact tensor from $\Rep_{\C}(G)$ to $\Mod(\C)$ is tensor isomorphic to the forgetful functor $\omega$.

The full faithfulness for arbitrary $G'$ follows from that for $G' = 1$ by evaluating at a faithful representation of $G$.
To prove the essential surjectivity for arbitrary $G'$, 
it is enough by \cite[p.\,130, Corollary~2.9]{DM82} to show that every faithful exact tensor functor $H$ from $\Rep_{\C}(G)$ 
to $\Rep_{\C}(G')$ is tensor isomorphic to an $H'$ with
\begin{equation*}
\omega'H' \,= \,\omega,
\end{equation*}
where $\omega'$ is the forgetful tensor functor from $\Rep_{\C}(G')$.
By the case where $G' = 1$, there is a tensor isomorphism $\varphi_0$ from $\omega'H$ to $\omega$.
For each $V$ in $\Rep_{\C}(G)$, there is by transport of structure a unique isomorphism $\varphi_V$ in $\Rep_{\C}(G')$
with source $H(V)$ such that $\omega'(\varphi_V) \,=\, \varphi_0{}_V$.
There is then a unique tensor functor $H'$ from $\Rep_{\C}(G)$ to $\Rep_{\C}(G')$ such that the $\varphi_V$ are the components
of a tensor isomorphism $\varphi:H \,\iso\, H'$.
Since $\omega' \varphi \,=\, \varphi_0$, we have $\omega'H'\, =\, \omega$.
\end{proof}

Theorem~\ref{t:min} below is an application, using Lemmas~\ref{l:Pequ} and \ref{l:Gequ}, of the splitting theorem for tensor categories
proved in \cite{AndKah}, \cite{O} and \cite{O19}.

Let $\sC$ be a tensor category.
If $\sC$ rigid, in the sense that every object of $\sC$ has a dual,
then the trace $\tr(e) \,\in\, \End_{\sC}(\I)$ of an endomorphism $e$ in $\sC$ is defined.
Suppose that $\sC$ is rigid, and that $\End_{\sC}(\I)$ is a local $\C$\nd algebra.
Then the morphisms $r\,:\,A \,\longrightarrow\, B$ in
$\sC$ for which $\tr(s \circ r)$ lies in the maximal ideal of $\End_{\sC}(\I)$ for every $s\,:\,B\,\longrightarrow\, A$ 
form the unique maximal tensor ideal $\sJ$ of $\sC$ \cite[p. 73]{O19}.
We write
\begin{equation*}
\overline{\sC}
\end{equation*} 
for the quotient of $\sC$ by $\sJ$.
It is a rigid tensor category with the same objects as $\sC$, and there is a projection $\sC\,\longrightarrow\, \overline{\sC}$
which is the identity map on objects and is a full tensor functor with kernel $\sJ$.
The restriction of $\sJ$ to any full rigid tensor subcategory $\sC_0$ of $\sC$ 
is the unique maximal tensor ideal of $\sC_0$, with $\overline{\sC_0}$ a full tensor subcategory of $\overline{\sC}$.

The group algebra of the symmetric group of degree $d$ acts on the $d$th tensor power of every object of $\sC$,
and if $\sC$ is pseudo-abelian
we may define for example the $d$th exterior power as the image of the antisymmetrising idempotent.

Suppose that $\sC$ is essentially small (i.e.,\ has a small skeleton), pseudo-abelian and rigid, that the $\C$\nd algebra
$\End_{\sC}(\I)$ is henselian local with residue field $\C$, and that for every object of $\sC$ some exterior power is $0$.
Then $\overline{\sC}$ is a semisimple Tannakian category over $\C$
(see \cite[Proposition~11.2(i)]{O19} and \cite[p. 165, Th\'eor\`eme~7.1]{Del90}). 
The splitting theorem for $\sC$ (see \cite[Theorem~11.7]{O19}) states that the projection $Q\,:\,\sC
\,\longrightarrow\, \overline{\sC}$ has a right quasi-inverse,
i.e.,\ a tensor functor $T\,:\,\overline{\sC}\,\longrightarrow\, \sC$ with the composite
\begin{equation*}
\overline{\sC} \,\xrightarrow{\,\,T\,\,} \, \sC \,\xrightarrow{\,\,Q\,\,}\, \overline{\sC}
\end{equation*}
tensor isomorphic to the identity.
Further if $T$ is such a right quasi-inverse and $\sD$ is an essentially small rigid tensor category,
then any tensor functor $S\,:\,\sD \,\longrightarrow\, \sC$ with $QS\,:\,\sD \,\longrightarrow\, \overline{\sC}$
faithful factors as
\begin{equation*}
\sD \,\longrightarrow\, \overline{\sC} \,\xrightarrow{\,\,T\,\,} \, \sC
\end{equation*} 
up to tensor isomorphism.

We recall from \cite[Lemma~2.1]{BisO'S21} that for any non-empty complex analytic space $X$, the $\C$\nd algebra $H^0(X,\,\sO_X)$ is 
henselian local with residue field $\C$ if and only if the restriction to $X_\mathrm{red}$ of every holomorphic function on $X$ is constant.
When this is so, the hypotheses on $\sC$ in the preceding paragraph are satisfied for
every full pseudo-abelian rigid tensor subcategory $\sC$ of $\Mod(X)$.

Recall \cite[Remark~3.10]{DM82} that any finitely generated Tannakian category over $\C$ is neutral,
and hence tensor equivalent to $\Rep_{\C}(G)$ for some affine $\C$\nd group $G$, necessarily algebraic.
Further $\Rep_{\C}(G)$ is semisimple if and only if $G$ is reductive, and when this is so, any tensor functor from $\Rep_{\C}(G)$
to a non-zero tensor category is faithful and exact.

Let $X$ be a complex analytic space and $J$ a reductive complex Lie group.
A principal $J$\nd bundle $P$ over $X$ will be called \emph{minimal} if a principal $J'$\nd subbundle of $P$ exists for a 
reductive closed complex Lie subgroup $J'$ of $J$ only when $J' \,=\, J$.
Any push forward of a minimal principal $J$\nd bundle over $X$ along a surjective homomorphism of algebraic complex Lie groups is minimal. 

\begin{thm}\label{t:min}
Let $X$ be a non-empty complex analytic space, $G$ a reductive algebraic $\C$\nd group and $P$ a principal $G_\mathrm{an}$\nd bundle over $X$.
Suppose that the restriction to $X_\mathrm{red}$ of every holomorphic function on $X$ is constant. 
Then $P$ is minimal if and only if the composite of the projection $\Mod(X) \,\xrightarrow{\,\,\,} \, \overline{\Mod(X)}$ with 
$P \times^{G_\mathrm{an}} -:\Rep_{\C}(G) \,\xrightarrow{\,\,\,} \, \Mod(X)$ is fully faithful.
\end{thm}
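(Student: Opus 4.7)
Set $T \,=\, P \times^{G_\mathrm{an}} -$ and let $Q\,:\, \Mod(X) \,\longrightarrow\, \overline{\Mod(X)}$ be the projection. The hypothesis on $X$ together with \cite[Lemma~2.1]{BisO'S21} makes $H^0(X,\,\sO_X)$ henselian local with residue field $\C$, hence $\End_{\overline{\Mod(X)}}(\I) \,=\, \C$, and the splitting theorem applies to the full pseudo-abelian rigid tensor subcategory $\sC \,\subset\, \Mod(X)$ generated by the image of $T$. Thus $\overline{\sC}$ is a semisimple finitely generated Tannakian category, and by \cite[Remark~3.10]{DM82} we may identify it with $\Rep_\C(H)$ for some reductive algebraic $\C$\nd group $H$; further the projection $\sC \,\longrightarrow\, \overline{\sC}$ admits a right quasi-inverse $R\,:\, \overline{\sC} \,\longrightarrow\, \sC$, and $\overline{\sC}$ embeds as a full tensor subcategory of $\overline{\Mod(X)}$, so full faithfulness of $QT$ as a functor to $\overline{\Mod(X)}$ coincides with full faithfulness of $QT\,:\, \Rep_\C(G) \,\longrightarrow\, \Rep_\C(H)$. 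Since $\Rep_\C(G)$ is semisimple with $\End(\I) \,=\, \C$, a direct semisimplicity argument shows that $QT$, as well as $Q(P' \times^{G'_\mathrm{an}} -)$ for any principal $G'_\mathrm{an}$\nd subbundle $P'$ of $P$ with reductive $G'$, is faithful. Lemma~\ref{l:Gequ} then furnishes a $\C$\nd homomorphism $h\,:\, H \,\longrightarrow\, G$ with $QT$ tensor isomorphic to $h^*$, and standard Tannakian theory gives that $h^*$ is fully faithful iff $h$ is surjective; the theorem thus reduces to showing that $h$ is surjective iff $P$ is minimal.

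For the ``only if'' direction, assume $P$ is minimal. By the universal property in the splitting theorem, $T$ factors up to tensor isomorphism as $R \circ QT$. The quasi-inverse $R\,:\, \Rep_\C(H) \,\longrightarrow\, \Mod(X)$ corresponds via Lemma~\ref{l:Pequ} to a principal $H_\mathrm{an}$\nd bundle $P_0$ over $X$, and then \eqref{e:hpush} together with the equivalence in Lemma~\ref{l:Pequ} identifies $P$ with the push forward of $P_0$ along $h_\mathrm{an}\,:\, H_\mathrm{an} \,\longrightarrow\, G_\mathrm{an}$. Factoring $h$ as $H \twoheadrightarrow h(H) \hookrightarrow G$ exhibits the intermediate push forward $P_0 \times^{H_\mathrm{an}} h(H)_\mathrm{an}$ as a principal $h(H)_\mathrm{an}$\nd subbundle of $P$ with reductive structure group, so minimality of $P$ forces $h(H) \,=\, G$, and hence $h$ is surjective.

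Conversely, assume $QT$ is fully faithful and let $P'$ be a principal $G'_\mathrm{an}$\nd subbundle of $P$ for some reductive $\C$\nd subgroup $G' \,\subset\, G$ with embedding $e\,:\, G' \,\hookrightarrow\, G$. By \eqref{e:hpush}, $T \,\cong\, (P' \times^{G'_\mathrm{an}} -) \circ e^*$, so $QT \,\cong\, Q(P' \times^{G'_\mathrm{an}} -) \circ e^*$; since $Q(P' \times^{G'_\mathrm{an}} -)$ is faithful and $QT$ is fully faithful, a direct diagram chase forces $e^*$ to be fully faithful, whence $e$ is surjective and $G' \,=\, G$, so $P$ is minimal. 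The main obstacle is the invocation of the splitting theorem to recognise $P$ itself, and not merely the image of its associated tensor functor in $\overline{\Mod(X)}$, as a push forward from the reductive group $H$ recovered from $\overline{\sC}$; once that identification is in place, both implications reduce to the Tannakian dictionary of Lemmas~\ref{l:Pequ} and \ref{l:Gequ}.
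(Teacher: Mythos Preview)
The proposal is correct and takes essentially the same approach as the paper: both apply the splitting theorem to the full pseudo-abelian rigid tensor subcategory of $\Mod(X)$ generated by the image of $P \times^{G_\mathrm{an}} -$, identify $\overline{\sC}$ with $\Rep_\C$ of a reductive group via \cite[Remark~3.10]{DM82}, and then use Lemmas~\ref{l:Pequ} and \ref{l:Gequ} together with \eqref{e:hpush} to pass between the tensor-categorical statement and minimality of $P$. Your notation differs (your $H$, $R$, $e$ are the paper's $G_0$, $T$, $i$), and you make the factorisation through $h(H)$ in the forward direction slightly more explicit, but the structure and key inputs are identical; the paper's converse likewise deduces full faithfulness of $i^*$ from faithfulness of the second factor composed with the projection, and then concludes $G' = G$ via $V^G = V^{G'}$ applied to the regular representation, which is exactly the ``standard Tannakian theory'' you invoke.
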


\begin{proof}
Suppose that $P$ is minimal.
Since $G$ is a reductive algebraic $\C$\nd group, $\Rep_{\C}(G)$ is finitely generated as a pseudo-abelian rigid tensor category.
Therefore, $P \times^{G_\mathrm{an}} -$ factors through a finitely generated pseudo-abelian full rigid tensor subcategory 
$\sC$ of $\Mod(X)$. 
Further any tensor functor from $\Rep_{\C}(G)$ to a non-zero tensor category is faithful and exact.
By the splitting theorem, $P \times^{G_\mathrm{an}} -$ thus factors up to tensor isomorphism as
\begin{equation*}
\Rep_{\C}(G) \,\longrightarrow \, \overline{\sC}\, \xrightarrow{\,\,\,T\,\,}\, \sC \, \longrightarrow \, \Mod(X)
\end{equation*} 
where the third arrow is the embedding and $T$ followed by the projection onto $\overline{\sC}$ is an equivalence.
Now $\overline{\sC}$ is a finitely generated semisimple Tannakian category over $\C$, and hence as above $\C$\nd tensor equivalent to 
$\Rep_{\C}(G_0)$ for some reductive algebraic $\C$\nd group $G_0$. 
Using the essential surjectivity of Lemmas~\ref{l:Pequ} and \ref{l:Gequ}, we thus obtain for some $\C$\nd homomorphism 
$h\,:\,G_0 \, \longrightarrow \, G$ and principal $G_0$\nd bundle $P_0$ a factorisation
\begin{equation}\label{e:Pfac}
\Rep_{\C}(G) \,\xrightarrow{\,\,\,h^*\,\,} \, \Rep_{\C}(G_0) \, \xrightarrow{\,\,\,P_0 \times^{G_0{}_\mathrm{an}} -\,\,} \, \Mod(X)
\end{equation} 
of $P \times^{G_\mathrm{an}} -$ up to tensor isomorphism, with $P_0 \times^{G_0{}_\mathrm{an}} -$ followed by the projection onto
$\overline{\Mod(X)}$ fully faithful. 
As in \eqref{e:hpush}, the composite \eqref{e:Pfac} is also tensor isomorphic to $P_1 \times^{G_\mathrm{an}} -$, 
where $P_1$ is the push forward of $P_0$ along $h_\mathrm{an}$.
By the full faithfulness of Lemma~\ref{l:Pequ}, $P_1$ is thus isomorphic to $P$.
Hence since $P$ is minimal, $h$ is surjective, so that $h^*$ is fully faithful.
The composite of the projection onto $\overline{\Mod(X)}$ with \eqref{e:Pfac} is then fully faithful, as required. 

Conversely suppose that the composite of the projection with $P \times^{G_\mathrm{an}} -$ is fully faithful.
It is to be shown that $G' \,=\, G$ for every reductive $\C$\nd subgroup $G'$ of $G$ for which $P$ has a principal 
$G'{}\!_\mathrm{an}$\nd subbundle $P'$.
If $i\,:\,G' \, \longrightarrow \, G$ is the embedding, then as in \eqref{e:hpush}, $P \times^{G_\mathrm{an}} -$ factors up to tensor isomorphism as
\begin{equation*}
\Rep_{\C}(G) \,\xrightarrow{\,\,\,i^*\,\,} \, \Rep_{\C}(G') \, \xrightarrow{\,\,\,P' \times^{G'{}\!_\mathrm{an}} -\,\,} \, \Mod(X).
\end{equation*} 
Composing with the projection and using the fact that $\Rep_{\C}(G') \, \longrightarrow \, \overline{\Mod(X)}$ is faithful
shows that $i^*$ is fully faithful.
Hence $V^G\, =\, V^{G'}$ for every representation $V$ of $G$.
Taking for $V$ the coordinate algebra of $G$ with $G$ acting by right translation shows that the affine $\C$\nd scheme $G/G'$ is a point.
Thus $G' \,=\, G$.
\end{proof}

\begin{thm}\label{t:pushan}
Let $X$ be a complex analytic space, and let $J_0$ and $J$ be algebraic complex Lie groups 
with $J_0$ reductive. Let $P_0$ be a minimal principal $J_0$\nd bundle over $X$, and let 
$h_1$ and $h_2$ be complex analytic homomorphisms from $J_0$ to $J$. Suppose that the 
restriction to $X_\mathrm{red}$ of every holomorphic function on $X$ is constant. Then the 
push forwards of $P_0$ along $h_1$ and $h_2$ are isomorphic if and only if $h_1$ and $h_2$ 
are conjugate.
\end{thm}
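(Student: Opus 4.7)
The plan is to mimic the strategy behind Theorem~\ref{t:push}, replacing the groupoid machinery of \cite{O19} by the dictionary between principal bundles and tensor functors furnished by Lemmas~\ref{l:Pequ} and~\ref{l:Gequ}, with Theorem~\ref{t:min} playing the role of \cite[Corollary~12.11(i)]{O19}. The ``if'' direction is immediate, exactly as in the algebraic case. For the converse, I first pass to the algebraic side: since $J_0$ is the analytification of a reductive algebraic $\C$\nd group $G_0$ and $J = G_\mathrm{an}$ for an affine algebraic $\C$\nd group $G$, and since $G_0$ is reductive, each $h_i$ is the analytification of a unique $\C$\nd homomorphism $G_0 \to G$, which I again denote by $h_i$. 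It therefore suffices to produce a $g \in G(\C)$ with $h_2 = gh_1g^{-1}$.

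Realise the common push forward as $(P,\, l_i)$ with $l_i\,:\,P_0 \to P$ satisfying $l_i(pg) = l_i(p)h_i(g)$. By \eqref{e:hpush} each pair $(h_i, l_i)$ induces a tensor isomorphism
\begin{equation*}
\varphi_i \,:\, P_0 \times^{G_0{}_\mathrm{an}} h_i^*(-) \iso P \times^{G_\mathrm{an}}(-),
\end{equation*}
so $\psi = \varphi_2^{-1} \circ \varphi_1$ is a tensor isomorphism between the two tensor functors
$P_0 \times^{G_0{}_\mathrm{an}} h_i^*(-)\,:\,\Rep_{\C}(G) \to \Mod(X)$ for $i=1,2$.
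Since $P_0$ is minimal, Theorem~\ref{t:min} says the composite of $P_0 \times^{G_0{}_\mathrm{an}} -$ with the projection $\Mod(X) \to \overline{\Mod(X)}$ is fully faithful as a functor on $\Rep_{\C}(G_0)$. Applying this projection to $\psi$ and lifting each component uniquely through the full faithfulness yields a natural isomorphism $\eta\,:\,h_1^* \iso h_2^*$ of functors $\Rep_{\C}(G) \to \Rep_{\C}(G_0)$.
Lemma~\ref{l:Gequ} then produces the desired $g \in G(\C)$ conjugating $h_1$ to $h_2$.

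The main technical issue is the middle step: verifying that the component-wise lifts $\eta_V$ genuinely assemble into a \emph{tensor} isomorphism of functors, not merely a natural transformation. This amounts to checking naturality in $V$ and compatibility with the tensor constraints and the unit of the tensor product in $\Rep_{\C}(G_0)$; each is a commutativity statement about morphisms in $\Rep_{\C}(G_0)$. Since the composite of $P_0 \times^{G_0{}_\mathrm{an}} -$ with the projection to $\overline{\Mod(X)}$ is faithful, it suffices to check the corresponding diagrams after this composite, where they hold automatically because $\psi$ is already a tensor isomorphism and the projection is a tensor functor. Once $\eta$ is promoted to a tensor isomorphism, Lemma~\ref{l:Gequ} closes the argument.
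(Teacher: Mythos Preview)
Your proposal is correct and follows essentially the same route as the paper's own proof: pass to the algebraic side, use \eqref{e:hpush} to obtain a tensor isomorphism between the composites $(P_0 \times^{G_0{}_\mathrm{an}} -)\circ h_i^*$, compose with the projection to $\overline{\Mod(X)}$, invoke Theorem~\ref{t:min} to descend to a tensor isomorphism $h_1^* \iso h_2^*$, and finish with the full faithfulness in Lemma~\ref{l:Gequ}. The only difference is that you spell out the verification that the lifted natural isomorphism is in fact a \emph{tensor} isomorphism, a step the paper leaves implicit in the phrase ``using Theorem~\ref{t:min} shows that $l_1{}\!^*$ and $l_2{}\!^*$ are tensor isomorphic''; your faithfulness argument for this is exactly the right one.
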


\begin{proof}
The ``if'' part is clear, even without any condition on $J_0$ or $X$.

To prove the converse, we may assume that $J_0 = G_0{}_\mathrm{an}$ and $J = G_\mathrm{an}$ for affine algebraic $\C$\nd groups $G_0$ and $G$, 
with $G_0$ reductive.
Then for $i\,=\, 1,\,2$ we have $h_i = l_i{}_\mathrm{an}$ for a $\C$\nd homomorphism $l_i\,:\,G_0 \, \longrightarrow \, G$.
Suppose that the push forwards of $P_0$ along $l_1{}_\mathrm{an}$ and $l_2{}_\mathrm{an}$ are isomorphic.
Then the composites of 
\begin{equation*}
P_0 \times^{G_0{}_\mathrm{an}} -\,:\,\Rep_{\C}(G_0) \, \longrightarrow \, \Mod(X)
\end{equation*} 
with $l_1{}\!^*$ and $l_2{}\!^*$ are tensor isomorphic.
Composing with the projection onto $\overline{\Mod(X)}$ and using Theorem~\ref{t:min} shows that $l_1{}\!^*$ and $l_2{}\!^*$ are tensor isomorphic.
Thus by the full faithfulness of Lemma~\ref{l:Gequ}, $l_1$ and $l_2$ and hence $h_1$ and $h_2$ are conjugate.
\end{proof}

Corollary~\ref{c:conjan} below follows from Theorem~\ref{t:pushan} in the same way as Corollary~\ref{c:conj} from Theorem~\ref{t:push}.

\begin{cor}\label{c:conjan}
Let $X$ be a complex analytic space, $J$ an algebraic complex Lie group and $P$ a principal $J$\nd bundle over $X$.
For $i\,=\, 0,\,1$, let $J_i$ be a reductive closed complex Lie subgroup of $P$ and $P_i$ a principal $J_i$\nd subbundle of $P$.
Suppose that $P_0$ is minimal, and that the restriction to $X_\mathrm{red}$ of every holomorphic function on $X$ is constant.
Then there exists an element $j \,\in\, J$ such that $J_1$ contains $jJ_0j^{-1}$ and $P_1$ contains a principal $jJ_0j^{-1}$\nd subbundle
isomorphic to $P_0j^{-1}$. 
\end{cor}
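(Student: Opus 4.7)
The plan is to mimic the proof of Corollary~\ref{c:conj}, replacing Theorem~\ref{t:push} by Theorem~\ref{t:pushan} throughout. All the ingredients --- push forwards of principal bundles, products of principal bundles, minimal principal bundles with reductive structure group, and the passage from minimality to the existence of a bijection via conjugation of homomorphisms --- transfer verbatim to the analytic setting, so no new ideas are required.

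Concretely, I would argue as follows. The fibre product $P_0 \times_X P_1$ is a principal $(J_0 \times J_1)$\nd bundle over $X$, and $J_0 \times J_1$ is reductive because $J_0$ and $J_1$ are. Descending on the dimension of a reductive closed complex Lie subgroup $J' \subset J_0 \times J_1$ over which a principal $J'$\nd subbundle of $P_0 \times_X P_1$ can be found, and then further shrinking $J'$ among subgroups of the same dimension, one obtains a reductive $J'$ and a minimal principal $J'$\nd subbundle $P'$ of $P_0 \times_X P_1$. For $i = 0, 1$ let $h_i \,:\, J' \longrightarrow J_i$ be the restriction of the projection $J_0 \times J_1 \longrightarrow J_i$; then $P_i$ is the push forward of $P'$ along $h_i$. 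Because $P_0$ is minimal, its principal $h_0(J')$\nd subbundle given by the image of $P'$ must coincide with $P_0$, so $h_0$ is surjective.

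Let $e_i \,:\, J_i \hookrightarrow J$ denote the embedding. Then both $e_0 \circ h_0$ and $e_1 \circ h_1$ push $P'$ forward to $P$. Applying Theorem~\ref{t:pushan} (whose hypotheses on $X$ are exactly those assumed in the statement), there exists $j \in J$ such that $e_1 \circ h_1$ is the conjugate of $e_0 \circ h_0$ by $j$. This means that $h_1(g') = j\, e_0(h_0(g'))\, j^{-1}$ for every point $g'$ of $J'$. Since $h_0$ is surjective, this forces $J_1 \supset jJ_0j^{-1}$, and it exhibits $h_1$ as the factorisation
\begin{equation*}
J' \,\xrightarrow{\,\,h_0\,\,}\, J_0 \,\longrightarrow\, jJ_0j^{-1} \,\longrightarrow\, J_1,
\end{equation*}
where the middle arrow is conjugation by $j$. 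Consequently $P_1$, being the push forward of $P'$ along $h_1$, is also the push forward of $P_0 j^{-1}$ along $jJ_0j^{-1} \hookrightarrow J_1$, so the principal $jJ_0j^{-1}$\nd subbundle of $P_1$ obtained as the image of $P_0 j^{-1}$ is isomorphic to $P_0 j^{-1}$.

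The only step that is not purely formal is the existence of the minimal reductive subbundle $P'$ of $P_0 \times_X P_1$, but this is a standard dimension/Noetherian-on-subgroups argument for reductive complex Lie subgroups (which are precisely the analytifications of reductive algebraic $\C$\nd subgroups of the ambient algebraic group), and it is implicit in the same way in the algebraic proof. Once this is granted, Theorem~\ref{t:pushan} does all the remaining work.
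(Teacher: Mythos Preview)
Your proposal is correct and follows essentially the same approach as the paper, which explicitly states that Corollary~\ref{c:conjan} follows from Theorem~\ref{t:pushan} in the same way as Corollary~\ref{c:conj} from Theorem~\ref{t:push}. Your write-up is in fact more detailed than the paper's (which gives no separate proof), and your remark justifying the existence of a minimal reductive subbundle $P'$ of $P_0 \times_X P_1$ makes explicit a step the paper leaves implicit.
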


A representation of a complex Lie group $J$ on a finite-dimensional $\C$\nd vector space $V$ may be identified with 
a homomorphism of complex Lie groups from $J$ to $\underline{\Aut}_{\C}(V)_{\mathrm{an}}$.
Therefore, if $G$ is a reductive algebraic $\C$\nd group, restriction along $G_\mathrm{an}
\,\longrightarrow\, G$ defines an equivalence (even an isomorphism)
from $\Rep_{\C}(G)$ to finite-dimensional representations of $G_\mathrm{an}$.

As in the algebraic case, we have for any complex analytic space $X$ an equivalence from vector bundles of rank $n$ over $X$ to principal
$({\rm GL}_n)_\mathrm{an}$\nd bundles over $X$, with decomposition as a direct sum of vector bundles of ranks $r$ and $s$ for $r + s
\,=\, n$ corresponding to 
reduction of the structure group from $({\rm GL}_n)_\mathrm{an}$ to $({\rm GL}_r)_\mathrm{an}\times ({\rm GL}_s)_\mathrm{an}$.
Similarly $P \times^J V_1$ and $P \times^J V_2$ are isomorphic for $n$\nd dimensional representations $V_1$ and $V_2$
of a complex Lie group $J$ if and only if the push forwards of the principal $J$\nd bundle $P$ over $X$ along 
$J\,\longrightarrow\, ({\rm GL}_n)_\mathrm{an}$ corresponding to $V_1$ and $V_2$ after choosing bases are isomorphic. 

Corollary~\ref{c:assocan} below follows from Theorem~\ref{t:pushan} and Corollary~\ref{c:conjan} in the same way as 
Corollary~\ref{c:assoc} from Theorem~\ref{t:push} and Corollary~\ref{c:conj}.
Alternatively it can be deduced directly from Theorem~\ref{t:min}. 

\begin{cor}\label{c:assocan}
Let $X$ be a complex analytic space, $J$ a reductive complex Lie group and $P$ a minimal principal $J$\nd bundle over $X$.
Suppose that the restriction to $X_\mathrm{red}$ of every holomorphic function on $X$ is constant. Then the following
two hold:
\begin{enumerate}
\item
For any finite-dimensional representation $V$ of $J$, the vector bundle $P \times^J V$ over $X$ is indecomposable if and only if 
$V$ is irreducible.

\item
For any two finite-dimensional representations $V_1$ and $V_2$ of $J$, the vector bundles $P \times^J V_1$ and 
$P \times^J V_2$ over $X$ are isomorphic if and only if $V_1$ and $V_2$ are.
\end{enumerate}
\end{cor}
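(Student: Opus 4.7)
The plan is to carry out the argument for Corollary~\ref{c:assoc} essentially verbatim, with every invocation of Theorem~\ref{t:push} and Corollary~\ref{c:conj} replaced by Theorem~\ref{t:pushan} and Corollary~\ref{c:conjan}, and using the analytic dictionary recalled just before the statement between $n$\nd dimensional representations of $J$, vector bundles of rank $n$ of the form $P \times^J V$, and push-forwards of $P$ to principal $(\mathrm{GL}_n)_\mathrm{an}$\nd bundles.

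For part (1), writing $J = G_\mathrm{an}$ for a reductive algebraic $\C$\nd group $G$, a finite-dimensional representation $V$ of $J$ is an algebraic representation of $G$ and so completely reducible, so that the reducible case is handled by decomposing $V = V_1 \oplus V_2$ non-trivially and observing that $P \times^J V = (P \times^J V_1) \oplus (P \times^J V_2)$. For the converse I would assume $V$ irreducible of dimension $n$, let $\rho : G \longrightarrow \mathrm{GL}_n$ be the corresponding homomorphism obtained after choosing a basis, suppose $P \times^J V \cong \sV' \oplus \sV''$ is a non-trivial decomposition of ranks $r$ and $s$ with $r+s=n$, and apply Corollary~\ref{c:conjan} with $(\mathrm{GL}_n)_\mathrm{an}$, $\rho(G)_\mathrm{an}$ and $(\mathrm{GL}_r)_\mathrm{an} \times (\mathrm{GL}_s)_\mathrm{an}$ playing the roles of $J$, $J_0$ and $J_1$, and with the push-forwards of $P$ along $\rho_\mathrm{an}$ and along $J \longrightarrow \rho(G)_\mathrm{an}$ and the principal subbundle determined by the decomposition playing the roles of $P$, $P_0$ and $P_1$. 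The second of these push-forwards is minimal because minimality is preserved under surjective push-forward, and the conclusion of Corollary~\ref{c:conjan} then furnishes a $j \in (\mathrm{GL}_n)_\mathrm{an}$ whose conjugation produces a non-trivial $\rho(G)$\nd invariant direct sum decomposition of $\C^n$, contradicting the irreducibility of $V$.

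For part (2), the ``if'' direction is functoriality of $P \times^J -$. For the ``only if'' direction, isomorphic associated vector bundles must share a common rank $n$; after choosing bases one obtains $\rho_1, \rho_2 : G \longrightarrow \mathrm{GL}_n$ whose push-forwards of $P$ along the analytifications are isomorphic principal $(\mathrm{GL}_n)_\mathrm{an}$\nd bundles, so Theorem~\ref{t:pushan} yields that $\rho_1{}_\mathrm{an}$ and $\rho_2{}_\mathrm{an}$ are conjugate, and the faithfulness of analytification on reductive algebraic $\C$\nd groups then gives conjugacy of $\rho_1$ and $\rho_2$, whence $V_1 \cong V_2$. The main point requiring verification, and the only place where the analytic setting could cause trouble, is that $\rho(G)_\mathrm{an}$ appearing in part (1) is a reductive closed complex Lie subgroup of $(\mathrm{GL}_n)_\mathrm{an}$, so that Corollary~\ref{c:conjan} actually applies; this holds because in characteristic zero the image $\rho(G)$, as a quotient of the reductive algebraic group $G$, is a reductive closed subgroup of $\mathrm{GL}_n$, and its analytification is reductive by definition. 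Once this is in place, no substantive obstacle remains, and as the remark following the statement indicates, a more direct path through Theorem~\ref{t:min} is also available should one wish to bypass Theorem~\ref{t:pushan} and Corollary~\ref{c:conjan} entirely.
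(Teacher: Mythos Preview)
Your proposal is correct and follows essentially the same approach as the paper, which simply states that Corollary~\ref{c:assocan} follows from Theorem~\ref{t:pushan} and Corollary~\ref{c:conjan} in the same way as Corollary~\ref{c:assoc} from Theorem~\ref{t:push} and Corollary~\ref{c:conj}, and also mentions the alternative route via Theorem~\ref{t:min} that you note at the end.
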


Let $G$ be an affine algebraic $\C$\nd group, and $G'\, \subset\, G$ be a $\C$\nd subgroup.
Right translation by $G'$ defines a structure of principal $G'$\nd bundle over $G/G'$ on $G$, and 
left translation by $G$ then defines a $G$\nd equivariant structure on the principal $G'$\nd bundle $G$ 
over $G/G'$. If $V'$ is a representation of $G'$, the
$G$\nd equivariant structure on the principal $G'$\nd bundle $G$ produces
a $G$\nd equivariant structure on the quasi-coherent $\sO_{G/G'}$\nd module $G \times_{\C}^{G'} V'$.
Recall that the induction functor from $\mathrm{REP}_{\C}(G')$ to $\mathrm{REP}_{\C}(G)$ is defined by
\begin{equation*}
\mathrm{Ind}_{G'}^G(V') \,\,=\,\, H^0(G/G',\,G \times_{\C}^{G'} V').
\end{equation*} 
It is right adjoint to the restriction functor from $\mathrm{REP}_{\C}(G)$ to
$\mathrm{REP}_{\C}(G')$, with adjunction isomorphism
\begin{equation*}
\Hom_{G'}(V,\,V')\,\iso\, \Hom_{G,G/G'}((G/G')\times_{\C} V,\,G \times_{\C}^{G'} V')
\,\iso\,\Hom_G(V,\,\mathrm{Ind}_{G'}^G(V')),
\end{equation*}
where if we denote by $\overline{g}$ the image of any $g$ of $G$ in $G/G'$, and by
$\overline{(g,\, v)}$ the image of a
point $(g,\,v)$ of $G \times_{\C} V'$ in $G \times_{\C}^{G'} V'$,
the first isomorphism sends any $\lambda\,:\,V \,\longrightarrow\, V'$ to the
homomorphism defined by
\begin{equation*}
(\overline{g},\,v) \,\,\longmapsto\,\, \overline{(g,\,\lambda(g^{-1}v))},
\end{equation*}
with the inverse given by taking the fibre at the base point of $G/G'$.

Suppose that $G/G'$ is affine.
Then $\mathrm{Ind}_{G'}^G$ is exact. Consequently, we have an isomorphism
\begin{equation}\label{e:Ext}
\Ext_{G'}^1(V,\,V') \,\,\iso\,\, \Ext_G^1(V,\,\mathrm{Ind}_{G'}^G(V'))
\end{equation}
for $V$ in $\REP_{\C}(G)$ and $V'$ in $\REP_{\C}(G')$, given by applying
$\mathrm{Ind}_{G'}^G$ to
an extension and then pulling back along the unit $V\,\longrightarrow\, \mathrm{Ind}_{G'}^G(V)$,
with the inverse given by applying the restriction functor and then pushing forward along the counit $\mathrm{Ind}_{G'}^G(V')
\,\longrightarrow\, V'$. Taking $V \,=\, \C$ in \eqref{e:Ext} shows that every extension $W'$ of $\C$ by $V'$ in $\REP_{\C}(G')$ 
is the push forward of the restriction from $G$ to $G'$ of an extension $W$ of $\C$ in $\REP_{\C}(G)$.
Further if $V'$ is finite-dimensional, replacing $W$ by a sufficiently large subrepresentation
shows that $W$ may be taken to be finite dimensional. 

\begin{prop}\label{p:splan}
Let $X$ be a complex analytic space, $G$ an affine algebraic $\C$\nd group and $P$ a principal $G_\mathrm{an}$\nd bundle over $X$.
Then $P$ has a principal $G_0{}_\mathrm{an}$\nd subbundle for some reductive $\C$\nd subgroup $G_0$ of $G$ if and only if 
the functor $P \times^{G_\mathrm{an}} -$ on $\Rep_{\C}(G)$ splits every short exact sequence.
\end{prop}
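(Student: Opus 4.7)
The ``only if'' direction is straightforward: if $P_0\subset P$ is a principal $G_0{}_\mathrm{an}$\nd subbundle with $G_0$ a reductive $\C$\nd subgroup of $G$, then by \eqref{e:hpush} the tensor functor $T:=P\times^{G_\mathrm{an}}-$ is tensor isomorphic to $(P_0\times^{G_0{}_\mathrm{an}}-)\circ i^*$, where $i\,:\,G_0\,\hookrightarrow\, G$. Since $G_0$ is reductive, $\Rep_{\C}(G_0)$ is semisimple, so every short exact sequence already splits after restriction along $i^*$; applying $P_0\times^{G_0{}_\mathrm{an}}-$ to such a splitting then yields the desired splitting of vector bundles in $\Mod(X)$.

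For the ``if'' direction, my plan is to reduce $P$ to a Levi subgroup. Fix a Levi decomposition $G=L\ltimes U$ with $L$ reductive and $U$ the unipotent radical of $G$, and write $i\,:\,L\,\hookrightarrow\, G$ for the inclusion and $q\,:\,G\,\twoheadrightarrow\, L$ for the Levi projection. By Lemma~\ref{l:Pequ}, a principal $L_\mathrm{an}$\nd subbundle of $P$ whose push forward along $L_\mathrm{an}\,\hookrightarrow\, G_\mathrm{an}$ recovers $P$ corresponds to a tensor functor $T_L\,:\,\Rep_{\C}(L)\,\longrightarrow\,\Mod(X)$ together with a tensor isomorphism $T_L\circ i^*\cong T$. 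The natural candidate is $T_L:=T\circ q^*$, a tensor functor from $\Rep_{\C}(L)$ to $\Mod(X)$; the task reduces to producing a tensor isomorphism $T\circ q^*\circ i^*\cong T$.

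For each $V\in\Rep_{\C}(G)$, there is a finite $G$\nd stable descending filtration $V=V_0\supset V_1\supset\cdots\supset V_n=0$ such that $U$ acts trivially on each $V_k/V_{k+1}$; explicitly, $V_k$ may be taken as the span of $(u_1-1)\cdots(u_k-1)v$ for $u_i\in U$ and $v\in V$. The graded pieces lie in the essential image of $q^*$, and the associated graded is isomorphic as a $G$\nd representation to $q^*i^*V$. The splitting hypothesis on $T$, iterated along this filtration, yields for each individual $V$ an isomorphism $T(V)\cong T(q^*i^*V)$ in $\Mod(X)$. The main obstacle is to arrange these isomorphisms to be natural in $V$ and compatible with the tensor structure, so as to assemble them into a tensor isomorphism $T\circ q^*\circ i^*\cong T$. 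I would handle this by combining the filtration splittings with the adjunction between restriction and induction, using the $\Ext$\nd isomorphism \eqref{e:Ext} and the remark preceding the proposition that every extension of $\C$ in $\Rep_{\C}(L)$ arises, via push forward and restriction, from a finite-dimensional extension of $\C$ in $\Rep_{\C}(G)$. Alternatively, one may construct a cross-section of $P/L_\mathrm{an}=\Sp(\widehat{T}(\C[G/L]))$ directly from the splittings, exploiting the filtered $G$\nd algebra structure on $\C[G/L]$.
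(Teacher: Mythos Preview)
Your ``only if'' direction is fine and matches the paper.

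For the ``if'' direction, you have correctly located the difficulty but not resolved it. The passage from ``$T$ splits each individual extension'' to ``there is a tensor isomorphism $T\circ q^*\circ i^*\cong T$'' is precisely the content of the proposition, and your two suggested routes are only gestures. The first (``combine filtration splittings with the adjunction and \eqref{e:Ext}'') does not explain how to make the splittings natural, let alone tensor-compatible; splittings of filtrations are not canonical, and there is no evident mechanism for patching them. The second (``construct a section of $\Sp(\widehat{T}(\C[G/L]))$ from the filtered $G$\nd algebra structure on $\C[G/L]$'') is in the right spirit but says nothing about how to do it: $\C[G/L]$ is infinite-dimensional when $U\ne 1$, and the splitting hypothesis only concerns finite-dimensional representations.

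The paper's argument supplies exactly the missing idea. It proceeds by induction on the length of the lower central series $U=U_1\supset U_2\supset\cdots\supset U_{n+1}=1$. The inductive hypothesis applied to $G/U_n$ produces a reduction of $P/(U_n)_\mathrm{an}$ to $G_0{}_\mathrm{an}$, and pulling back gives a principal $G'{}\!_\mathrm{an}$\nd subbundle $P'\subset P$ with $G'=U_n\rtimes_\C G_0$. The point is that now $U_n$ is abelian, so $G'/G_0=U_n$ embeds $G'$\nd equivariantly as an affine hyperplane (the fibre over $1\in\C$) inside a \emph{single finite-dimensional} extension $W'$ of $\C$ by $U_n$ in $\Rep_\C(G')$. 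A section of $P'/G_0{}_\mathrm{an}$ is then the same as a splitting of the associated short exact sequence of vector bundles. Finally, since $G/G'\cong U/U_n$ is affine, the isomorphism \eqref{e:Ext} shows that $W'$ arises from a finite-dimensional extension in $\Rep_\C(G)$, so the splitting hypothesis on $P\times^{G_\mathrm{an}}-$ applies. This reduction to a single finite-dimensional extension at each inductive step is what your proposal lacks.
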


\begin{proof}
The ``only if'' part follows from the fact that $P \times^{G_\mathrm{an}} -$ factors through $P \times^{G_0{}_\mathrm{an}} -$
for any $\C$\nd subgroup $G_0$ of $G$.

Conversely, suppose that $P \times^{G_\mathrm{an}} -$ splits every short exact sequence of finite-dimensional representation of $G$.
Write
\begin{equation*}
G \,=\, U \rtimes_{\C} G_0,
\end{equation*}
where $U$ is the unipotent radical of $G$ and $G_0$ is a reductive $\C$\nd subgroup of $G$.
Then we show that $P$ has a principal $G_0{}_\mathrm{an}$\nd subbundle. 

We argue by induction on the length of the lower central series
\begin{equation*}
U \,=\, U_1 \,\supset\, U_2 \,\supset\, \cdots \,\supset\, U_n \,\supset\, U_{n+1}\, =\, 1
\end{equation*}
of $U$. If $n \,=\, 0$ then $G_0 \,=\, G$. Suppose that $n \,>\, 0$.
Then the splitting condition for $P \times^{G_\mathrm{an}} -$ is satisfied with $G$ replaced by $G/U_n$ and $P$ by the push forward 
$P/(U_n)_\mathrm{an}$ of $P$ along the projection from $G_\mathrm{an}$ onto $(G/U_n)_\mathrm{an}$, because the relevant functor is isomorphic
to inflation from $G/U_n $ to $G$ followed by $P \times^{G_\mathrm{an}} -$. Since
\begin{equation*}
G/U_n \,=\, (U/U_n) \rtimes_{\C} G_0,
\end{equation*}
$P/(U_n)_\mathrm{an}$ has by the induction hypothesis a principal $G_0{}_\mathrm{an}$\nd subbundle.
Its inverse image under the projection from $P$ to $P/(U_n)_\mathrm{an}$ is a principal $G'{}\!_\mathrm{an}$\nd subbundle $P'$ of $P$, where 
\begin{equation*}
G'\, =\, U_n \rtimes_{\C} G_0
\end{equation*}
is the inverse image of $G_0$ under the projection from $G$ to $G/U_n$.
We show that $P'/G_0{}_\mathrm{an}$ has a cross-section over $X$, so that $P'$ and hence $P$ has a principal $G_0{}_\mathrm{an}$\nd subbundle.

Since $U_n$ is commutative and unipotent, it may be identified with a finite-dimensional $\C$\nd vector space.
The action of $G'$ on $U_n$ by conjugation then gives it a structure of representation of $G'$.
We have a short exact sequence
\begin{equation*}
0 \,\longrightarrow\, U_n \,\longrightarrow\, W' \,\longrightarrow\, \C \,\longrightarrow\, 0
\end{equation*}
of representations of $G'$, where $W'$ has underlying $\C$\nd vector space $U_n \oplus k$ with action
\begin{equation*}
(ug_0)(v,\,\alpha) \,=\, (g_0v + \alpha u,\,\alpha)
\end{equation*}
of the point $ug_0$ of $G'$ on the point $(v,\,\alpha)$ of $W'$, and $G'$ acts trivially on $\C$.
The fibre of $W'$ above $1$ in $\C$ may then be identified with
\begin{equation*}
G'/G_0 \,=\, U_n
\end{equation*} 
on which $G'$ acts by translation. 
Thus we have short exact sequence
\begin{equation}\label{esp}
0 \,\longrightarrow\, P' \times^{G'{}\!_\mathrm{an}} U_n \,\longrightarrow\, P' \times^{G'{}\!_\mathrm{an}} W'
\,\longrightarrow\, P' \times^{G'{}\!_\mathrm{an}} \C \,\longrightarrow\, 0
\end{equation}
of vector bundles over $X$, where the fibre of $P' \times^{G'{}\!_\mathrm{an}} W'$ above the identity cross-section of 
$P' \times^{G'{}\!_\mathrm{an}} \C \,=\, X \times \C$ is
\begin{equation*}
P' \times^{G'{}\!_\mathrm{an}} (G'/G_0)_\mathrm{an} \,=\, P'/G_0{}_\mathrm{an}.
\end{equation*}
It is thus enough to show that the sequence in \eqref{esp} splits.
In fact $G/G' \,=\, U/U_n$ is affine, so that as above the extension
$W'$ of $\C$ by $U_n$ is the push forward of the restriction from $G$ to $G'$ of an extension in $\Rep_{\C}(G)$.
Since restriction from $G$ to $G'$ followed by $P' \times^{G'{}\!_\mathrm{an}} -$ is isomorphic to $P \times^{G_\mathrm{an}} -$,
the required splitting follows from that assumed for $P \times^{G_\mathrm{an}} -$. 
\end{proof}

Let $f\,:\,X'\,\longrightarrow\, X$ be a proper surjective morphism of complex analytic spaces, with $X$ irreducible.
Then the restriction of $f$ to some irreducible component $X''$ of $X'$ is surjective:
the covering of $X'$ by its irreducible components is locally finite \cite[9.2.2]{GraRem84},
so that if $X_\mathrm{red}$ is smooth at $x \,\in\, X$, there is an irreducible component $X''$ of $X'$ for which the dimensions of $f(X'')$
and $X$ coincide at $x$, and hence $f(X'') \,=\, X$ \cite[9.1.1]{GraRem84}.

Theorem~\ref{t:redsuban} below follows from Lemmas~\ref{l:restran}, \ref{l:isoan} and \ref{l:inj} and Proposition~\ref{p:splan} 
in the same way that Theorem~\ref{t:redsub} follows from Lemmas~\ref{l:restr}, \ref{l:iso} and \ref{l:inj} and Proposition~\ref{p:spl}:
After writing $J \,=\, G_\mathrm{an}$, the proof in the case where \ref{i:redsublfan} holds is almost identical, 
and in the case where \ref{i:redsubpropan} holds, 
we again write $X$ 
as the disjoint union $\coprod_\alpha X_\alpha$ of its irreducible components $X_\alpha$, and replace $f$ by its restriction to 
$\coprod_\alpha X'{}\!_\alpha$, where $X'{}\!_\alpha$ is as above an irreducible component of $f^{-1}(X_\alpha)$ with
$f(X'{}\!_\alpha) \,=\, X_\alpha$.

\begin{thm}\label{t:redsuban}
Let $f\,:\,X'\,\longrightarrow\, X$ be a morphism of complex analytic spaces, $J$ an algebraic complex Lie group and $P$ a principal 
$J$\nd bundle over $X$. Suppose that either of the following conditions holds:
\begin{enumerate}
\renewcommand{\theenumi}{(\alph{enumi})}
\item\label{i:redsublfan}
$f_*\sO_{X'}$ is a locally free $\sO_X$\nd module of finite type which is nowhere $0$;
\item\label{i:redsubpropan} 
$X$ is normal, and $f$ is proper and surjective.
\end{enumerate}
Then $P$ has a principal $J_0$\nd subbundle for some reductive closed complex Lie subgroup $J_0$ of $J$ if and only if $f^*P$ does. 
\end{thm}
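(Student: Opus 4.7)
The plan is to follow the template of the proof of Theorem~\ref{t:redsub}, transposing it to the analytic setting by substituting the analytic lemmas. Writing $J = G_\mathrm{an}$ for an affine algebraic $\C$-group $G$ reduces the statement to principal $G_\mathrm{an}$-bundles, and every reductive closed complex Lie subgroup $J_0 \subset J$ is of the form $G_0{}_\mathrm{an}$ for a unique reductive $\C$-subgroup $G_0 \subset G$. The ``only if'' direction is then immediate from the functoriality of pullback and push forward, and requires no hypothesis on $f$.

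For the ``if'' direction under \ref{i:redsublfan}, I would invoke Proposition~\ref{p:splan}: since $P \times^{G_\mathrm{an}} -$ commutes with pullback, it suffices to show that every short exact sequence $0 \to \sV' \to \sV \to \sV'' \to 0$ of vector bundles over $X$ obtained by applying $P \times^{G_\mathrm{an}} -$ to a short exact sequence in $\Rep_\C(G)$ splits whenever its pullback to $X'$ does. The obstruction lies in $H^1(X, \sV' \otimes_{\sO_X} \sV''{}^\vee)$, so the conclusion follows from Lemma~\ref{l:inj} applied with $\sW = \sV' \otimes_{\sO_X} \sV''{}^\vee$; the hypothesis on invertibility of the rank of $f_*\sO_{X'}$ is automatic because we are in characteristic $0$.

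For the ``if'' direction under \ref{i:redsubpropan}, I would reduce to \ref{i:redsublfan} using Lemmas~\ref{l:restran} and \ref{l:isoan}. Normality of $X$ forces each stalk $\sO_{X,x}$ to be an integral domain, so $X$ is reduced and exactly one irreducible component passes through each point; combined with the local finiteness of the decomposition into irreducible components, this realises $X$ as the disjoint union of its open irreducible components $X_\alpha$. For each $\alpha$, the observation preceding the theorem supplies an irreducible component $X'{}\!_\alpha$ of $f^{-1}(X_\alpha)$, taken with its reduced structure, such that $f$ restricts to a proper surjection $X'{}\!_\alpha \to X_\alpha$; replacing $X'$ by $\coprod_\alpha X'{}\!_\alpha$ we may assume that the fibre $f^{-1}(X_\alpha)$ is reduced and irreducible for each $\alpha$. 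Applying Lemma~\ref{l:restran} componentwise then produces an open subspace $U \subset X$ whose complement is an analytic subset everywhere of codimension at least two and for which $f_*\sO_{X'}|_U$ is locally free of finite type, so that \ref{i:redsublfan} holds for the restriction $q : f^{-1}(U) \to U$. By Lemma~\ref{l:isoan}, \ref{i:redsublfan} also holds for the inclusion $j : U \hookrightarrow X$. Since by assumption $(j \circ q)^*P$ has a principal subbundle with reductive structure group, two successive applications of the already-established case \ref{i:redsublfan}, first to $q$ and then to $j$, transfer the reductive reduction up to $P$.

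The main obstacle is the geometric reduction step for \ref{i:redsubpropan}, where one must verify that the analytic notions of normality, reducedness, irreducible component, and local finiteness interact as in the algebraic case, and in particular that each $f^{-1}(X_\alpha)$ carries an irreducible component mapping surjectively onto $X_\alpha$ --- a point that relies on the dimension and local finiteness results from \cite{GraRem84} cited in the paragraph preceding the theorem. Once this bookkeeping is in place, the categorical content is supplied entirely by Proposition~\ref{p:splan} and Lemma~\ref{l:inj}, and the remainder of the argument is a direct transcription of the proof of Theorem~\ref{t:redsub}.
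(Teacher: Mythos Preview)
Your proposal is correct and follows essentially the same approach as the paper: write $J = G_\mathrm{an}$, handle case \ref{i:redsublfan} via Proposition~\ref{p:splan} and Lemma~\ref{l:inj} exactly as in Theorem~\ref{t:redsub}, and in case \ref{i:redsubpropan} decompose $X$ into its irreducible components, replace $X'$ by a disjoint union of surjecting irreducible components of the preimages, and reduce to \ref{i:redsublfan} via Lemmas~\ref{l:restran} and \ref{l:isoan}. The paper's proof sketch is terser but identical in substance, and your identification of the surjectivity of the chosen $X'{}\!_\alpha$ as the one nontrivial analytic input (handled by the paragraph preceding the theorem) matches the paper's emphasis.
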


Let $X$ be a complex analytic space and $J$ a reductive complex Lie group.
A principal $J$\nd bundle $P$ over $X$ will be called \emph{almost minimal} if a principal $J'$\nd subbundle of $P$ exists for a 
reductive closed complex Lie subgroup $J'$ of $J$ only if $J'$ contains the identity component of $J$.
Any push forward of an almost minimal principal $J$\nd bundle over $X$ along a surjective homomorphism of algebraic complex Lie groups 
is almost minimal. 

Lemma~\ref{l:alminan} below can be proved in the same way as Lemma~\ref{l:almin}.

\begin{lem}\label{l:alminan}
Let $X$ be a complex analytic space, $J$ a reductive complex Lie group and $J_0\, \subset\,J$ a closed complex Lie subgroup containing 
the identity component of $J$. Let $P$ be a principal $J$\nd bundle over $X$ and $P_0$ a principal $J_0$\nd subbundle of $P$.
Then $P$ is almost minimal if and only if $P_0$ is so.
\end{lem}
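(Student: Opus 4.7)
The plan is to transport the proof of Lemma~\ref{l:almin} to the analytic setting essentially verbatim, since its arguments reduce to covering-space theory for the finite discrete quotient $\overline{J} := J/J^0$. The ``only if'' direction is immediate: any principal $J'$-subbundle of $P_0$ with $J'$ a reductive closed complex Lie subgroup is also a principal $J'$-subbundle of $P$, so if $P$ is almost minimal then $J' \supset J^0$; and since $J_0 \supset J^0$ forces $J_0^0 = J^0$, this shows $P_0$ is almost minimal.

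For the converse, suppose $P_0$ is almost minimal, let $J_1 \subset J$ be a reductive closed complex Lie subgroup, and let $P_1$ be a principal $J_1$-subbundle of $P$; the goal is to prove $J^0 \subset J_1$. Since $J$ is algebraic and reductive, $J^0$ is open of finite index in $J$, so $\overline{J}$ is a finite (discrete) complex Lie group. For $i = 0, 1$ let $\overline{J}_i \subset \overline{J}$ be the image of $J_i$, and let $\overline{P}_i$ be the push forward of $P_i$ along $J_i \to \overline{J}_i$; then $\overline{P}_i$ is a principal $\overline{J}_i$-subbundle of the principal $\overline{J}$-bundle $\overline{P}$, and $P_0$ is the preimage of $\overline{P}_0$ under $P \to \overline{P}$.

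Since $\overline{P}$ is a finite covering space of $X$, one may (working componentwise on $X$ if needed) replace $P_0$ by the preimage of a minimal principal subbundle of $\overline{P}_0$ to arrange that $\overline{P}_0$ itself is minimal. The analytic analogue of the covering-space conjugacy statement preceding Corollary~\ref{c:conj} then furnishes $j \in J$ such that, after replacing $J_0$ by $jJ_0 j^{-1}$ and $P_0$ by $P_0 j^{-1}$, one has $\overline{J}_1 \supset \overline{J}_0$ and $\overline{P}_1 \supset \overline{P}_0$. The preimage $P_0 \cap P_1$ of $\overline{P}_0$ under the holomorphic submersion $P_1 \to \overline{P}_1$ is then a principal $(J_0 \cap J_1)$-subbundle of $P_0$, and almost minimality of $P_0$ together with $J_0^0 = J^0$ gives $J_0 \cap J_1 \supset J^0$, whence $J_1 \supset J^0$.

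The main obstacle is the covering-space step just cited, which requires the fact that on a connected base, any two minimal principal subbundles of a bundle with finite discrete structure group are related by a translation. This is the direct analytic counterpart of the finite-group observations made in the paragraphs immediately preceding Corollary~\ref{c:conj}, and should go through unchanged once one interprets a principal bundle with finite discrete structure group as a finite unramified covering equipped with a free Galois action.
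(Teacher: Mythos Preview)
Your proposal is correct and follows essentially the same route as the paper: the paper simply states that Lemma~\ref{l:alminan} ``can be proved in the same way as Lemma~\ref{l:almin}'', and your argument is a faithful transcription of that proof to the analytic setting, including the passage to the finite quotient $\overline{J}=J/J^0$, the reduction to $\overline{P}_0$ minimal, the conjugacy step for finite structure groups, and the conclusion via $P_0\cap P_1$. One small remark: Lemma~\ref{l:almin} assumes $X$ connected while Lemma~\ref{l:alminan} as stated does not; your parenthetical ``working componentwise on $X$ if needed'' does not literally suffice (the conjugating element $j$ may vary across components), but connectedness is in force wherever the lemma is actually applied, so this is a harmless omission in the paper's statement rather than a gap in your argument.
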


Theorem~\ref{t:alminan} below follows from Lemmas~\ref{l:restran}, \ref{l:isoan}, \ref{l:rep} and \ref{l:alminan}
and Corollary~\ref{c:assocan} in the same way that Theorem~\ref{t:almin} follows from Lemmas~\ref{l:restr}, 
\ref{l:iso}, \ref{l:rep} and \ref{l:almin} and Corollary~\ref{c:assoc}.
Note that if \ref{i:redsubpropan} of Theorem~\ref{t:redsuban} holds, then $X$ is reduced, so that for $X$ non-empty the condition
``the restriction to $X_\mathrm{red}$ of every holomorphic function on $X$ is constant'' is equivalent to ``$H^0(X,\,\sO_X) \,=\, \C$''.

\begin{thm}\label{t:alminan}
Let $f\,:\,X'\,\longrightarrow\, X$ be a morphism of complex analytic spaces, $J$ a reductive complex Lie group
and $P$ a principal $J$\nd bundle over $X$.
Suppose that the restriction to $X_\mathrm{red}$ of every holomorphic function on $X$ is constant, and that either 
\ref{i:redsublfan} or \ref{i:redsubpropan} of Theorem~\ref{t:redsuban} holds.
Then $P$ is almost minimal if and only if $f^*P$ is so.
\end{thm}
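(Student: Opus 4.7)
The plan is to mimic, step by step, the proof of Theorem~\ref{t:almin}, replacing each algebraic ingredient with its complex analytic counterpart: Lemmas~\ref{l:restr}, \ref{l:iso}, \ref{l:almin} become Lemmas~\ref{l:restran}, \ref{l:isoan}, \ref{l:alminan}, and Corollary~\ref{c:assoc} becomes Corollary~\ref{c:assocan}. The ``if'' direction is immediate and needs no hypothesis on $X$ or $f$. For the converse, I write $J = G_\mathrm{an}$ for a reductive affine algebraic $\C$\nd group $G$ using the dictionary recalled in the paper. The hypothesis on holomorphic functions forces $H^0(X,\sO_X)$ to be henselian local with residue field $\C$, so that $X$ is connected and the tensor category $\Mod(X)$ has the Krull--Schmidt property by the same appeal to \cite[11.2(ii)]{O19} used in the algebraic case.

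Under hypothesis~\ref{i:redsublfan}, I first reduce to $X'$ connected: $f_*\sO_{X'}$ has constant positive rank $n$ on the connected space $X$, so $X'$ is a disjoint union of at most $n$ connected components and I may replace $X'$ by one such component. By the analytic equivalence between rank-$n$ vector bundles and principal $({\rm GL}_n)_\mathrm{an}$\nd bundles, $f_*\sO_{X'}$ is isomorphic to $P_0 \times^{({\rm GL}_n)_\mathrm{an}} \C^n$ for some principal $({\rm GL}_n)_\mathrm{an}$\nd bundle $P_0$. I then choose a reductive $\C$\nd subgroup $G_1 \subset G \times_\C {\rm GL}_n$ for which $P \times_X P_0$ admits a minimal principal $G_1{}_\mathrm{an}$\nd subbundle $P_1$, so that $P$ and $P_0$ are the push forwards of $P_1$ along the two projections. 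Since $P$ is almost minimal, the image of $G_1$ in $G$ must contain the identity component of $G$, and Lemma~\ref{l:alminan} reduces the conclusion to proving that $f^*P_1$ is almost minimal. Replacing $(G,P)$ by $(G_1,P_1)$, I may therefore assume $P$ is minimal and
\[
f_*\sO_{X'} \cong P \times^{G_\mathrm{an}} W
\]
for some finite-dimensional $W$ in $\Rep_\C(G)$.

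The core argument then proceeds as follows. Let $G' \subset G$ be a reductive $\C$\nd subgroup such that $f^*P$ admits a principal $G'{}_\mathrm{an}$\nd subbundle $P'$. For every irreducible $V$ in $\Rep_\C(G)$ with $V^{G'} \neq 0$, the trivial summand $\C$ of $V|_{G'}$ yields $\sO_{X'}$ as a direct summand of $f^*(P \times^{G_\mathrm{an}} V) \cong P' \times^{G'{}_\mathrm{an}} V$. Pushing forward by $f$ and invoking the projection formula $f_*f^*\sV \cong \sV \otimes_{\sO_X} f_*\sO_{X'}$ (valid locally on $X$ because $f_*\sO_{X'}$ is locally free of finite type), I find that $P \times^{G_\mathrm{an}} W$ is a direct summand of $P \times^{G_\mathrm{an}}(V \otimes_\C W)$. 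Krull--Schmidt for $\Mod(X)$ together with Corollary~\ref{c:assocan} then forces $W$ to be a direct summand of $V \otimes_\C W$, whence $V$ is a direct summand of $W \otimes_\C W^\vee$. Only finitely many pairwise non-isomorphic irreducible $V$ can occur in this way, so condition~\ref{i:repfin} of Lemma~\ref{l:rep} is satisfied for $G'$, forcing $G'$ to contain the identity component of $G$. Thus $f^*P$ is almost minimal.

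For hypothesis~\ref{i:redsubpropan} I pass to case~\ref{i:redsublfan} via Lemmas~\ref{l:restran} and~\ref{l:isoan}. A normal analytic space is reduced, so the hypothesis on holomorphic functions collapses to $H^0(X,\sO_X) = \C$. After replacing $X'$ by the reduced subspace of a suitably chosen irreducible component mapping surjectively onto each component of $X$ (using the remark preceding Theorem~\ref{t:redsuban}), Lemma~\ref{l:restran} produces an open $U \subset X$ with analytic complement of codimension at least two such that $f_*\sO_{X'}|_U$ is locally free. Lemma~\ref{l:isoan} gives $H^0(U,\sO_U) = H^0(X,\sO_X) = \C$, and case~\ref{i:redsublfan} applied in turn to the inclusion $U \hookrightarrow X$ and to the restriction of $f$ above $U$ yields the desired conclusion. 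I expect the main obstacle to be the bookkeeping in the reduction to connected $X'$ under~\ref{i:redsublfan} while preserving the hypothesis that $f_*\sO_{X'}$ is nowhere zero; this is where constancy of rank over the connected space $X$ is essential. Everything else is formal given the analytic preliminaries in the paper.
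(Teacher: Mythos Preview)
Your proposal is correct and follows exactly the approach the paper indicates: it transcribes the proof of Theorem~\ref{t:almin} with Lemmas~\ref{l:restran}, \ref{l:isoan}, \ref{l:alminan} and Corollary~\ref{c:assocan} in place of their algebraic counterparts, which is precisely what the paper says to do. The only cosmetic point is that under \ref{i:redsubpropan} the space $X$ is connected and normal, hence irreducible, so ``each component of $X$'' is just $X$ itself.
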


Let $f\,:\,X'\,\longrightarrow\, X$ be a morphism of complex analytic spaces for which either 
\ref{i:redsublfan} or \ref{i:redsubpropan} of Theorem~\ref{t:redsuban} holds.
Then $f(X')$ is dense in $X$.
Suppose that the restriction to $X'{}\!_\mathrm{red}$ of every holomorphic function on $X'$ is constant.
Then the restriction to $X_\mathrm{red}$ of every holomorphic function on $X$ is also constant.
The complex analytic analogues of the parametrisation \eqref{e:Tset} and the map \eqref{e:Tmap} then follow from
Theorem~\ref{t:pushan} and Corollary~\ref{c:conjan} in the same way as \eqref{e:Tset} and \eqref{e:Tmap} from
Theorem~\ref{t:push} and Corollary~\ref{c:conj}.
Thus if $J$ is an algebraic complex Lie group and $P$ is a principal $J$\nd bundle over $X$,
and if $P$ has for some reductive closed complex Lie subgroup $J_0$ of $J$ a minimal principal $J_0$\nd subbundle $P_0$ with $f^*P_0$ also
minimal, then pullback along $f$ induces for any reductive closed complex Lie subgroup $J_1$ of $J$ a bijection from
the isomorphism classes of principal $J_1$\nd subbundles of $P$ to the isomorphism classes of principal $J_1$\nd subbundles of $f^*P$.
Theorem~\ref{t:pullan} below then follows from Theorems~\ref{t:redsuban} and \ref{t:alminan} in the same way as Theorem~\ref{t:pull} 
from Theorems~\ref{t:redsub} and \ref{t:almin}.

\begin{thm}\label{t:pullan}
Let $f\,:\,X'\,\longrightarrow\, X$ be a morphism of complex analytic spaces.
Suppose that the restriction to $X'{}\!_\mathrm{red}$ of every holomorphic function on $X'$ is constant,
that $f^*Z$ is connected for every connected finite \'etale cover $Z$ of $X$, and that either 
\ref{i:redsublfan} or \ref{i:redsubpropan} of Theorem~\ref{t:redsuban} holds.
Then for every algebraic complex Lie group $J$, reductive closed complex Lie subgroup $J_1$ of $J$,
and principal $J$\nd bundle $P$ over $X$, pullback along $f$ defines a bijection 
from the set isomorphism classes of principal $J_1$\nd subbundles of $P$
to the set isomorphism classes of principal $J_1$\nd subbundles of $f^*P$.
\end{thm}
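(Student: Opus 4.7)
The plan is to transport the proof of Theorem~\ref{t:pull} to the analytic setting, substituting Theorems~\ref{t:redsuban} and \ref{t:alminan} for Theorems~\ref{t:redsub} and \ref{t:almin}. First I would observe that either \ref{i:redsublfan} or \ref{i:redsubpropan} forces $f(X')$ to be dense in $X$, so the constancy hypothesis on holomorphic functions on $X'{}\!_\mathrm{red}$ propagates to one on $X_\mathrm{red}$, as noted in the paragraph preceding the theorem. This puts the analytic analogues of the parametrisation \eqref{e:Tset} and the map \eqref{e:Tmap} at our disposal, built from Theorem~\ref{t:pushan} and Corollary~\ref{c:conjan}.

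If $P$ has no principal subbundle with reductive closed complex Lie structure group, Theorem~\ref{t:redsuban} shows neither does $f^*P$, and both sides of the asserted bijection are empty. Otherwise I would choose a reductive closed complex Lie subgroup $J_0 \subset J$ and a principal $J_0$\nd subbundle $P_0$ of $P$, shrinking $J_0$ if necessary so that $P_0$ is minimal. The parametrisation just recalled then reduces the whole theorem to a single claim: that $f^*P_0$ is minimal. To prove this, suppose $J'{}\!_0 \subset J_0$ is a reductive closed complex Lie subgroup for which $f^*P_0$ has a principal $J'{}\!_0$\nd subbundle, equivalently that $(f^*P_0)/J'{}\!_0 \,=\, f^*(P_0/J'{}\!_0)$ admits a cross-section over $X'$. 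By Theorem~\ref{t:alminan} the bundle $f^*P_0$ is almost minimal, so $J'{}\!_0$ contains the identity component of $J_0$, and hence $P_0/J'{}\!_0 \to X$ is a finite \'etale cover. Since $X$ is connected and $f^*$ takes connected finite \'etale covers of $X$ to connected covers of $X'$ by hypothesis, it induces a bijection on sets of connected components of finite \'etale covers; in particular the cross-section of $f^*(P_0/J'{}\!_0)$ over $X'$ descends to a cross-section of $P_0/J'{}\!_0$ over $X$. This yields a principal $J'{}\!_0$\nd subbundle of $P_0$, and minimality of $P_0$ forces $J'{}\!_0 \,=\, J_0$, proving minimality of $f^*P_0$.

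No step in this translation is itself a serious obstacle; the substantive analytic input has already been isolated upstream in Theorems~\ref{t:pushan}, \ref{t:redsuban} and \ref{t:alminan}, whose proofs invoke the splitting theorem for tensor categories via Theorem~\ref{t:min} and Proposition~\ref{p:splan}. The only point I would want to verify carefully is the descent of the cross-section: one must confirm that the hypothesis on connectedness of $f^*Z$ really does yield a bijection on connected components. This follows because a finite \'etale cover $Z$ of $X$ decomposes as the disjoint union of its connected components, whose pullbacks to $X'$ remain connected and pairwise disjoint, so a global cross-section of $f^*Z$ over $X'$ picks out a component of $f^*Z$ isomorphic to $X'$, which by the bijection must come from a component of $Z$ isomorphic to $X$, giving the sought cross-section on $X$.
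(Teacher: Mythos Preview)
Your proposal is correct and follows essentially the same route as the paper: the paper explicitly states that Theorem~\ref{t:pullan} follows from Theorems~\ref{t:redsuban} and \ref{t:alminan} in the same way as Theorem~\ref{t:pull} from Theorems~\ref{t:redsub} and \ref{t:almin}, after first noting (in the paragraph preceding the theorem) that the constancy condition propagates from $X'$ to $X$ and that the analytic analogues of \eqref{e:Tset} and \eqref{e:Tmap} are available via Theorem~\ref{t:pushan} and Corollary~\ref{c:conjan}. Your reduction to the minimality of $f^*P_0$ and the \'etale-cover cross-section argument mirror the proof of Theorem~\ref{t:pull} exactly.
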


Corollary~\ref{c:pullan} below follows from Theorem~\ref{t:pullan} in the same way as Corollary~\ref{c:pull} from Theorem~\ref{t:pull}.
Alternatively if $f\,:\,X'\,\longrightarrow\, X$ is as in Theorem~\ref{t:pullan}, then by Theorems~\ref{t:min} and \ref{t:pullan} 
the composite of the projection from $\Mod(X')$ to $\overline{\Mod(X')}$ with the pullback tensor functor
\begin{equation*}
f^*\,:\,\Mod(X) \, \,\longrightarrow \,\, \Mod(X')
\end{equation*}
is full, so that $f^*$ is ``local'', i.e.,\ sends the unique maximal tensor ideal of $\Mod(X)$ into the unique maximal tensor ideal 
of $\Mod(X')$, and the induced tensor functor
\begin{equation*}
\overline{\Mod(X)} \,\, \longrightarrow \,\, \overline{\Mod(X')}
\end{equation*}
is fully faithful.
Corollary~\ref{c:pullan} follows, in the case of \ref{i:pullPan} using the splitting theorem.

\begin{cor}\label{c:pullan}
Let $f\,:\,X'\,\longrightarrow\, X$ be a morphism of complex analytic spaces such that the hypotheses of Theorem~\ref{t:pullan} are satisfied.
\begin{enumerate}
\item\label{i:pullPan}
For any reductive complex Lie group $J$, two principal $J$\nd bundles $P_1$ and $P_2$ over $X$ are isomorphic
if and only if $f^*P_1$ and $f^*P_2$ are so.
\item\label{i:pullVindecan}
A vector bundle $\sV$ over $X$ is indecomposable if and only if $f^*\sV$ is so.
\item\label{i:pullVisoan}
Two vector bundles $\sV_1$ and $\sV_2$ over $X$ are isomorphic if and only if $f^*\sV_1$ and $f^*\sV_2$ are so.
\end{enumerate}
\end{cor}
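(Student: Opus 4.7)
The plan is to mirror the proof of Corollary~\ref{c:pull}, deducing each of the three parts of Corollary~\ref{c:pullan} from Theorem~\ref{t:pullan} by an appropriate choice of structure group $J$ and reductive closed complex Lie subgroup $J_1$. For part~\ref{i:pullPan}, since $J$ is reductive we may write $J \,=\, G_\mathrm{an}$ for a reductive algebraic $\C$\nd group $G$, and then $J \times J \,=\, (G \times_{\C} G)_\mathrm{an}$ is an algebraic complex Lie group in which the diagonal copy of $J$ sits as a reductive closed complex Lie subgroup, so that Theorem~\ref{t:pullan} applies to this data.

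The key observation for \ref{i:pullPan} is that $P_1$ and $P_2$ are isomorphic as principal $J$\nd bundles over $X$ if and only if the fibre product $P_1 \times_X P_2$, regarded as a principal $(J \times J)$\nd bundle, admits a principal $J$\nd subbundle for the diagonal embedding $J \hookrightarrow J \times J$: such a subbundle is precisely the graph of an isomorphism $P_1 \iso P_2$. The same description applies after pullback, since $f^*(P_1 \times_X P_2) \,=\, f^*P_1 \times_{X'} f^*P_2$. Applying Theorem~\ref{t:pullan} with $J \times J$ in place of $J$ and the diagonal $J$ in place of $J_1$ yields a bijection on isomorphism classes of such subbundles, so in particular the existence of one upstairs is equivalent to the existence of one after pullback.

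For \ref{i:pullVindecan}, recall that vector bundles of rank $n$ over $X$ correspond to principal $({\rm GL}_n)_\mathrm{an}$\nd bundles, with direct sum decompositions into subbundles of ranks $r$ and $s$ corresponding to reductions of structure group to $({\rm GL}_r)_\mathrm{an} \times ({\rm GL}_s)_\mathrm{an}$. Applying Theorem~\ref{t:pullan} to each of the finitely many pairs with $r,\, s \,\ge\, 1$ and $r + s \,=\, n$ shows that $\sV$ has a non-trivial decomposition if and only if $f^*\sV$ does. Part~\ref{i:pullVisoan} then reduces to \ref{i:pullPan} with $J\,=\, ({\rm GL}_n)_\mathrm{an}$ applied to the principal $({\rm GL}_n)_\mathrm{an}$\nd bundles associated to $\sV_1$ and $\sV_2$, after noting that the existence of an isomorphism over either $X$ or $X'$ forces the two ranks to coincide.

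The steps above are largely bookkeeping once Theorem~\ref{t:pullan} is granted, so no serious obstacle is expected; it remains only to check that the stated correspondences between principal subbundles and isomorphisms or decompositions behave well under pullback, which is immediate from the constructions. An alternative route bypasses the case-by-case argument: Theorems~\ref{t:min} and \ref{t:pullan} together imply that $f^*\,:\,\Mod(X) \,\longrightarrow\, \Mod(X')$ is local with respect to the maximal tensor ideal, so that the induced tensor functor $\overline{\Mod(X)} \,\longrightarrow\, \overline{\Mod(X')}$ is fully faithful; parts~\ref{i:pullVindecan} and \ref{i:pullVisoan} then follow directly from this full faithfulness, while \ref{i:pullPan} follows using the splitting theorem in conjunction with Lemmas~\ref{l:Pequ} and \ref{l:Gequ}.
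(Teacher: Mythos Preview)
Your proposal is correct and follows essentially the same approach as the paper: each part is deduced from Theorem~\ref{t:pullan} exactly as Corollary~\ref{c:pull} was deduced from Theorem~\ref{t:pull}, and you even note the same alternative route via locality of $f^*$ and full faithfulness of $\overline{\Mod(X)} \,\longrightarrow\, \overline{\Mod(X')}$ that the paper records.
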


Consider the following condition which is slightly stronger than \ref{i:redsubpropan} of Theorem~\ref{t:redsuban}:

\begin{itemize}
\item[$\mathrm{\ref{i:redsubprop}}^\prime$]
$X$ is normal, $X'$ is non-empty, and $f$ is proper with restriction to every 
irreducible component of $X'$ surjective.
\end{itemize}

It can be seen in a similar way to the algebraic case that if in Theorem~\ref{t:pullan} the hypothesis that 
``either \ref{i:redsublfan} or \ref{i:redsubpropan} of Theorem~\ref{t:redsuban} holds'' is replaced by
``either \ref{i:redsublfan} of Theorem~\ref{t:redsuban} or $\mathrm{\ref{i:redsubprop}}^\prime$ holds'',
then the hypothesis ``the restriction to $X'{}\!_\mathrm{red}$ of every holomorphic function on $X'$ is constant'',
may be replaced by ``the restriction to $X_\mathrm{red}$ of every holomorphic function on $X$ is constant''.

\section*{Acknowledgements}

The first author is partially supported by a J. C. Bose Fellowship (JBR/2023/000003).

\providecommand{\bysame}{\leavevmode\hbox to3em{\hrulefill}\thinspace}
\providecommand{\MR}{\relax\ifhmode\unskip\space\fi MR }
\providecommand{\MRhref}[2]{%
\href{http://www.ams.org/mathscinet-getitem?mr=#1}{#2}
}
\providecommand{\href}[2]{#2}

\end{document}